\theoremstyle{plain}
\newtheorem{theorem}{Theorem}[section]
\newtheorem{corollary}[theorem]{Corollary}
\newtheorem{lemma}[theorem]{Lemma}
\newtheorem{proposition}[theorem]{Proposition}
\theoremstyle{remark}
\newtheorem{remark}[theorem]{Remark}
\numberwithin{equation}{section}
\newcommand{\R}{\mathbb R}
\newcommand{\C}{\mathbb C}
\newcommand{\Z}{\mathbb Z}
\newcommand{\De}{\Delta}
\newcommand{\half}{\frac{1}{2}}
\newcommand{\tensor}{\otimes}
\newcommand{\rFs}[5]{\,_{#1}F_{#2} \left( \genfrac{.}{.}{0pt}{}{#3}{#4}	\ ;#5 \right)}
\newcommand{\rphis}[5]{\,_{#1}\varphi_{#2} \left( \genfrac{.}{.}{0pt}{}{#3}{#4}
\ ;#5 \right)}
\newcommand{\rphisempty}[2]{\,_{#1}\varphi_{#2}}
\newcommand{\su}{\mathfrak{su}}
\newcommand{\U}{\mathcal U}
\newcommand{\qbinom}[3]{\genfrac{[}{]}{0pt}{0}{#2}{#3}_{#1}}
\begin{document}
\title[Quantum algebra approach to rational functions of $q$-Racah type]{Quantum algebra approach to univariate and multivariate rational functions of $q$-Racah type}
\author{Wolter Groenevelt and Carel Wagenaar}
\maketitle
\begin{abstract}
	In this paper, we study rational functions of $q$-Racah type and a multivariate extension, using representation theory of $\U_q(\mathfrak{sl}_2)$. Eigenfunctions of twisted primitive elements in $\U_q(\mathfrak{su}_2)$ can be expressed in terms of $q^{-1}$-Krawtchouk polynomials. Using this, we show that overlap coefficients of solutions of a generalized eigenvalue problem (GEVP) and an eigenvalue problem (EVP) can be expressed in terms of a rational function of $\rphisempty{4}{3}$-type. With help of the quantum algebra, we derive (bi)orthogonality relations as well as a GEVP for these functions. Furthermore, using this new algebraic interpretation, we can exploit the co-algebra structure of $\U_q(\mathfrak{sl}_2)$ to find a multivariate extension of these rational functions and derive biorthogonality relations and GEVPs for the multivariate functions. Then we repeat this procedure for the non-compact quantum algebra $\U_q(\mathfrak{su}_{1,1})$, where the $q^{-1}$-Al-Salam--Chihara polynomials play the role of the $q^{-1}$-Krawtchouk polynomials. As an application of the multivariate rational functions, we show that they appear as duality functions for certain interacting particle systems.
\end{abstract}

\section{Introduction}
Finding an algebraic underpinning of special functions has been of great interest in order to obtain a better understanding of these functions (see e.g. \cite{ViKl}). For example, one can use (quantum) algebras to study orthogonal polynomials and prove fundamental properties such as the three-term recurrence relations, orthogonality and symmetries. One of these interpretations is the realization of $q$-Racah polynomials, i.e. Askey-Wilson polynomials which are orthogonal on a finite set, as overlap coefficients of eigenfunctions of some specific elements of the algebra \cite{Zhe}. That is, one has to solve two eigenvalue problems (EVPs),
\begin{align*}
	Xf &= \lambda f, \\
	Yg &= \mu g,
\end{align*}
where $X$ and $Y$ are difference operators coming from representations of specific elements of the algebra. Let us denote the family of solutions of these EVPs by $\{f_x\}$ and $\{g_y\}$ with corresponding eigenvalues $\{\lambda_x\}$ and $\{\mu_y\}$ respectively. Then the overlap coefficients are the inner product of these functions,
\[
	P(x,y)=\langle f_x, g_y\rangle.
\]
Many orthogonal polynomials of the Askey-scheme can be found in this way. \\

In recent years, there has been an interest in doing something similar for biorthogonal rational functions which can be described by a (basic) hypergeometric series. These functions are often connected to generalized eigenvalue problems (GEVPs), which are problems of the form
\[
	Xf = \lambda Z f,
\] 
where $X$ and $Z$ are linear operators. For example, when $X,Z$ are tridiagonal operators, the solutions are biorthogonal rational functions \cite{ZheBi}. Also, it seems that many rational functions can be interpreted as overlap coefficients of solutions of GEVPs. This resulted in meta-algebras encoding both ($q$-)Hahn polynomials and rational functions of ($q$)-Hahn type \cite{TVZ,VinZhe}. Very recently, a rational Wilson algebra has been introduced \cite{CTVZ}, capturing recurrence relations of Wilson's rational $\rphisempty{10}{9}$-functions. However, so far no algebra has been found from which the fundamental properties of these rational functions can be elegantly derived. This paper aims to bridge this gap for a special case of Wilson's rational functions: the rational functions of $q$-Racah type.  Using $\U_q(\mathfrak{sl}_2)$, we find a quantum algebraic interpretation of these rational $\rphisempty{4}{3}$-functions. We show that these functions are overlap coefficients of solutions to an EVP and a GEVP. That is, it is the inner product of functions $f$ and $g$ which solve
\begin{align*}
	Xf &= \lambda Zf, \\
	Yg &= \mu g,
\end{align*}
for certain linear operators $X$, $Y$ and $Z$. Using $\U_q(\mathfrak{sl}_2)$ and its $*$-structures, we derive the (bi)orthogonality and recurrence relations of these functions. We do this for the compact case $\U_q(\mathfrak{su}_2)$, which leads to a finite orthogonality, and the non-compact case $\U_q(\mathfrak{su}_{1,1})$, which gives a countably infinite orthogonality. \\

Using this new algebraic interpretation of rational functions, we can also exploit the co-algebra structure of $U_q(\mathfrak{sl}_2)$ to find a multivariate extension of these functions and prove its (bi)orthogonality and recurrence relations. These multivariate functions have a nested structure similar to the Tratnik-type orthogonal polynomials \cite{GRmulti}. Finding algebraic interpretations of multivariate extensions of orthogonal polynomials has recently been of great interest (see e.g. \cite{CFR,GIV,Gr21}), but so far this has not been done in a similar way for biorthogonal rational functions.\\

Let us describe the structure of this paper in a bit more detail. In section \ref{sec:sum} we prove a formula connecting the summation of two $\rphisempty{3}{2}$'s with a $\rphisempty{4}{3}$. In essence, this shows that the inner product of two (different) $q^{-1}$-Krawtchouk (or $q^{-1}$-Al-Salam--Chihara) polynomials are rational functions of $\rphisempty{4}{3}$-type. Then, in section \ref{sec:su2rat}, we formulate a GEVP and EVP involving twisted primitive elements of the quantum algebra $\U_q(\mathfrak{su}_2)$. We rewrite the GEVP to an EVP and use that $q$-Krawtchouk polynomials are eigenfunctions of twisted primitive elements to solve these (G)EVPs. Using the summation formula of the previous section, we show that the overlap coefficients of the solutions to these (G)EVPs are rational $\rphisempty{4}{3}$-functions of $q$-Racah type. Then, using the GEVP and orthogonality of the $q$-Krawtchouk polynomials, we derive a GEVP and (bi)orthogonality of the rational functions of $q$-Racah type. Afterwards, we use the co-product structure of $\U_q(\mathfrak{su}_2)$ to find a multivariate extension of these rational functions and derive (bi)orthogonality as well as recurrence relations these functions satisfy. 
In section \ref{sec:su11rat} we repeat the steps of the previous section, but then for the non-compact case $\U_q(\mathfrak{su}_{1,1})$. Here, the $q^{-1}$-Al-Salam--Chihara polynomials play the role of the $q$-Krawtchouk polynomials. We end this article by showing an application of these rational functions. We show that they appear as self-duality functions for the interacting particle systems dynamic ASEP and dynamic ASIP.

\subsection{Preliminaries \& notations}
	Let $q>0$ such that $q\neq 1$. For aesthetic reasons, we use the following notation,
	\[
		[t]_q=\frac{q^t-q^{-t}}{q-q^{-1}},
	\]
	\[
		\{t\}_q = \frac{q^t+q^{-t}}{q+q^{-1}},
	\]
	the first satisfying 
	\[
		\lim\limits_{q\to 1}[t]_q = t.
	\]
	For linear operators $A,B$, we define the $q$-commutator by
	\[
		[A,B]_q = qAB - q^{-1}BA.
	\]
	Besides this, we use standard notation for $q$-shifted factorials and $q$-hypergeometric functions as in \cite{GR}. In particular, $q$-shifted factorials are given by
	\[
	(a;q)_n = (1-a)(1-aq) \cdots (1-aq^{n-1}), 
	\]
	with $n \in \Z_{\geq0} $ and the convention $(a;q)_0=1$. For $q \in (0,1)$, we define
	\[
	(a;q)_\infty = \lim_{n \to \infty} (a;q)_n.
	\] 
	For $n \in \Z_{\geq 0}$ and $j=0,\ldots,n$, the $q$-binomial coefficient is given by
	\[
	\qbinom{q}{n}{j} = \frac{	(q;q)_n}{	(q;q)_j	(q;q)_{n-j}}.
	\]
	The $q$-hypergeometric series $_{r+1}\varphi_r$ is given by
	\[
	\rphis{r+1}{r}{a_1,\ldots,a_{r+1}}{b_1,\ldots,b_r}{q,z} = \sum_{n=0}^\infty \frac{(a_1;q)_n \cdots (a_{r+1};q)_n}{(b_1;q)_n \cdots (b_r;q)_n} \frac{z^n}{(q;q)_n}.
	\]
	If for some $j$ we have $a_j=q^{-N}$ with $N \in \Z_{\geq 0}$, the series terminates after $N+1$ terms, since $(q^{-N};q)_n=0$ for $n>N$.\\
	
	Moreover, we make intensive use of $\U_q\big(\mathfrak{sl}_2\big)$, the quantised universal enveloping algebra of the Lie algebra $\mathfrak{sl}_2$. This is the unital, associative, complex algebra generated by $K$, $K^{-1}$, $E$, and $F$, subject to the relations
	\begin{align}
		K K^{-1} = 1 = K^{-1}K, \quad KE = qEK, \quad KF= q^{-1}FK, \quad EF-FE =\frac{K^2-K^{-2}}{q-q^{-1}}.\label{eq:UqRelations}
	\end{align}
	The comultiplication $\De:\U_q\to\U_q\otimes\U_q$ is an algebra homomorphism defined on the generators by
	\begin{equation} \label{eq:comult}
		\begin{split}
			\De(K) = K \tensor K,&\quad  \De(E)= K \tensor E + E \tensor K^{-1}, \\
			\De(K^{-1}) = K^{-1} \tensor K^{-1},&\quad  \De(F) = K \tensor F + F \tensor K^{-1}.
		\end{split}
	\end{equation}
	Throughout this paper we will use two different $*$-structures on $\U_q\big(\mathfrak{sl}_2\big)$, giving either the compact form $\U_q\big(\mathfrak{su}_2\big)$ or the non-compact form $\U_q\big(\mathfrak{su}_{1,1}\big)$. These $*$-structures will be introduced in their sections. In both cases, $\Delta$ is a $*$-algebra homomorphism.
	
\section{A summation formula for $q$-Krawtchouk and Al-Salam--Chihara polynomials} \label{sec:sum}
	Crucial for our results is the following summation formula, which connects two polynomials which can be written as $\rphisempty{3}{2}$'s in base $q^{-1}$ to a $\rphisempty{4}{3}$-rational function. We will apply this formula to $q^{-1}$-Krawtchouk polynomials and $q^{-1}$-Al-Salam--Chihara polynomials in sections \ref{sec:su2rat} and \ref{sec:su11rat} respectively.
	\begin{lemma}\label{lem:summation3phi2to4phi3}
		Let $x,y\in \Z_{\geq 0}$ and either $|bcd|<1$ and $0<q<1$ or $a=q^{-N}$ for some $N\in\Z_{\geq 0}$. Then we have the following summation formula between $\rphisempty{3}{2}$'s in base $q^{-1}$ and a rational function which is a $\rphisempty{4}{3}$ of $q$-Racah type,
		\begin{align*}
			\begin{split}
				\frac{(abcd ;q)_\infty}{(bcd;q)_\infty} &\frac{(bdq^{-y}/c;q)_{y}}{(abcd;q)_{y}}\frac{(q^{-x}/b^2;q)_{x}}{(a;q)_{x}}\rphis{4}{3}{q^{-x},ab^2 q^{x},bcd,bd/c}{b^2q,bdq^{-y}/c,abcdq^{y}}{q;q} \\
				= &\sum_{n=0}^\infty (bcd)^n\frac{(a;q)_n}{(q;q)_n}\rphis{3}{2}{q^{n}, q^{x}, q^{-x}/ab^2}{1/a,0}{q^{-1},q^{-1}} \rphis{3}{2}{q^{n}, q^{y}, q^{-y}/ac^2}{1/a,0}{q^{-1},q^{-1}}.\end{split}
		\end{align*}
	\end{lemma}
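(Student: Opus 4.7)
The plan is to expand the right-hand side as a multiple $q$-series, rearrange the order of summation, and reduce via classical $q$-series transformations to a single $\rphisempty{4}{3}$ matching the left-hand side.

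\emph{Expansion.} I would first expand both $\rphisempty{3}{2}$'s on the right-hand side as finite sums. The numerator parameters $q^x$ and $q^y$ cause $(q^x;q^{-1})_k$ and $(q^y;q^{-1})_\ell$ to vanish for $k>x$ and $\ell>y$, so each series terminates. Converting every base-$q^{-1}$ Pochhammer to base $q$ via the identity $(A;q^{-1})_m = (A^{-1};q)_m(-A)^m q^{-\binom{m}{2}}$---and in particular using $(q^n;q^{-1})_m = (-q^n)^m q^{-\binom{m}{2}}(q^{-n};q)_m$---collects all of the $n$-dependence of the summand into the compact expression $(bcd\,q^{k+\ell})^n(a;q)_n(q^{-n};q)_k(q^{-n};q)_\ell/(q;q)_n$, times explicit constants in $k,\ell,x,y$.

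\emph{Inner sum.} Next I would swap the summations so that $n$ runs innermost. Using $(q^{-n};q)_m = (-1)^m q^{\binom{m}{2}-mn}(q;q)_n/(q;q)_{n-m}$ for $n\ge m$ and assuming (without loss of generality) $k\ge\ell$, the shift $n\mapsto n+k$ identifies the inner sum, up to an explicit prefactor, as
\[
\rphis{2}{1}{aq^k,\,q^{k+1}}{q^{k-\ell+1}}{q,\,bcd}.
\]
Applying Heine's first transformation with $(a,b,c,z)=(aq^k,q^{k+1},q^{k-\ell+1},bcd)$ rewrites this as a ratio of infinite $q$-products times the terminating series $\rphis{2}{1}{q^{-\ell},\,bcd}{abcd\,q^k}{q,\,q^{k+1}}$; the ratio of infinite products telescopes to exactly $(abcd;q)_\infty/(bcd;q)_\infty$, producing the global prefactor on the LHS.

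\emph{Collapsing to a single sum.} The right-hand side is now a triple finite sum over $k$, $\ell$, and the new index $j$ of the terminating $_2\varphi_1$. The goal is to close two of these into a single sum over $k$ that reproduces the $\rphisempty{4}{3}$ on the LHS. I would interchange the $\ell$- and $j$-sums and evaluate the resulting inner $\ell$-sum using the $q$-Pfaff--Saalschütz summation, then simplify Pochhammers via $(q;q)_{m+k}=(q;q)_k(q^{k+1};q)_m$ and its analogues. The main obstacle is precisely this final step: one must verify the balanced condition for $q$-Pfaff--Saalschütz (or substitute Sears' three-term transformation between terminating $\rphisempty{4}{3}$'s if it does not apply directly) and carefully track every prefactor so that the auxiliary normalizations $(bdq^{-y}/c;q)_y(q^{-x}/b^2;q)_x/[(abcd;q)_y(a;q)_x]$ emerge correctly. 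The assumption $|bcd|<1$ is needed only to justify the interchanges of summation; the identity then extends from the terminating case $a=q^{-N}$ to the general case by analytic continuation in $a$.
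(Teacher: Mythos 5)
Your overall strategy --- expand both $\rphisempty{3}{2}$'s, interchange so that $n$ is summed innermost, evaluate the $n$-sum in closed form, and finish with $q$-Pfaff--Saalsch\"utz --- is the same as the paper's. The decisive difference is a step you skip: the paper first applies the transformation \cite[(III.8)]{GR} to each base-$q^{-1}$ $\rphisempty{3}{2}$, converting it into an explicit prefactor times $\rphis{2}{1}{q^{-x},ab^2q^{x}}{b^2q}{q,q^{n+1}}$, so that \emph{all} of the $n$-dependence sits in the argument $q^{n+1}$. The inner $n$-sum is then literally $\sum_n (bcdq^{j_1+j_2})^n(a;q)_n/(q;q)_n$, a one-line application of the $q$-binomial theorem (in either of its two forms, which is exactly how the two hypotheses $|bcd|<1$ and $a=q^{-N}$ enter), and the remaining $j_2$-sum is a balanced terminating $\rphisempty{3}{2}$ that $q$-Saalsch\"utz evaluates immediately. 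Your route keeps the factors $(q^{-n};q)_k(q^{-n};q)_\ell$ in the summand, and the price is real.

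Concretely, two things go wrong or are left undone. First, ``assume without loss of generality $k\ge\ell$'' is not a WLOG: the double sum over $k\le x$ and $\ell\le y$ is not symmetric, and your closed form for the inner sum, containing $1/(q^{k-\ell+1};q)_\ell$, is meaningless for $k<\ell$ (a factor $(1-q^0)$ appears). You would have to use the $k\leftrightarrow\ell$-swapped expression on the complementary region, which destroys the uniformity you need to then ``interchange the $\ell$- and $j$-sums and apply Saalsch\"utz''. Second, that final collapse --- which you yourself identify as the main obstacle --- is precisely where the proof lives or dies, and it is not carried out; after Heine you have a genuine triple sum with $k$- and $\ell$-dependent finite products $(abcd;q)_k^{-1}(q^{k-\ell+1};q)_\ell^{-1}$, and it is not verified that the inner $\ell$-sum is Saalsch\"utzian. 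The remark that the general case follows ``by analytic continuation in $a$'' from $a=q^{-N}$ is also unsupported: for $0<q<1$ the points $q^{-N}$ accumulate only at infinity, so agreement there does not propagate by the identity theorem; the paper instead treats the two hypotheses as separate cases of the $q$-binomial theorem. The fix is to adopt the paper's first move: transform each $\rphisempty{3}{2}$ via \cite[(III.8)]{GR} before touching the $n$-sum.
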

	\begin{proof}
		The proof consists of writing out the $\rphisempty{3}{2}$'s as sums, say over $j_1=0,...,x$ and $j_2=0,...,y$. Then we interchange the summations such that we first sum over $n$, then over $j_2$ and lastly over $j_1$. We will see that we can explicitly compute the sums over $n$ and $j_2$ and end up with the desired $\rphisempty{4}{3}$. Let us first rewrite the $\rphisempty{3}{2}$ such that the dependence on $n$ is simpler. The $\rphisempty{3}{2}$ in base \\$q^{-1}$ can be written into a $\rphisempty{3}{1}$ in base $q$, using
		\begin{align}
			(a;q^{-1})_j = (-a)^j q^{-\half j(j-1)}(1/a;q)_j.
		\end{align} 
		Using this, we obtain
		\begin{align*}
			\rphis{3}{2}{q^{n}, q^{x}, q^{-x}/ab^2}{1/a,0}{q^{-1},q^{-1}} &= \rphis{3}{1}{q^{-n},q^{-x},ab^2q^{x}}{a}{q,q^{n}/b^2}  \\
			&=(-1/b^2)^xq^{-\half x(x+1)}\frac{(b^2q;q)_x}{(a;q)_x} \rphis{2}{1}{q^{-x},ab^2q^{x}}{b^2q}{q,q^{n+1}},
		\end{align*}
		where we used the transformation \cite[(III.8)]{GR} in the second step. Since for $z\in \Z_{\geq 0}$,
		\[
		\rphis{2}{1}{q^{-z},ab^2q^{z}}{b^2q}{q,q^{n+1}} = \sum_{j=0}^z \frac{(q^{-z},ab^2q^{z};q)_j}{(b^2q,q;q)_j}q^{j(n+1)},
		\]
		we can rearrange the summations to obtain that the expression
		\[
		\sum_{n=0}^\infty (bcd)^n\frac{(a;q)_n}{(q;q)_n}\rphis{3}{2}{q^{n}, q^{x}, q^{-x}/ab^2}{1/a,0}{q^{-1},q^{-1}} \rphis{3}{2}{q^{n}, q^{y}, q^{-y}/ac^2}{1/a,0}{q^{-1},q^{-1}}
		\]
		is equal to
		\begin{align}
			\sum_{j_1=0}^{x} A(x,j_1,a,b) \sum_{j_2=0}^{y} A(y,j_2,a,c) \sum_{n=0}^\infty (bcd)^nq^{n(j_1+j_2)}\frac{(a;q)_n}{(q;q)_n},\label{eq:rearrangesums}
		\end{align}
		where
		\[
		A(z,j,a,b)= (-1/b^2)^zq^{-\half  z(z+1)}\frac{(b^2q;q)_z}{(a;q)_z}\frac{(q^{-z},ab^2q^{z};q)_j}{(b^2q,q;q)_j}q^{j}.
		\]
		If we now use the $q$-binomial theorem (\cite[(II.4)]{GR} if $a=q^{-N}$ with $N\in\Z_{\geq 0}$ or \cite[(II.3)]{GR} if \\ $|bcd|<1$ and $0<q<1$) we get
		\begin{align*}
			\sum_{n=0}^\infty(bcd)^nq^{n(j_1+j_2)}\frac{(a;q)_n}{(q;q)_n} &= \frac{(abcdq^{j_1+j_2};q)_\infty}{(bcdq^{j_1+j_2};q)_\infty},
		\end{align*}
		which can be written as
		\begin{align*}
			\frac{(bcd;q)_{j_1}(bcdq^{j_1};q)_{j_2}}{(abcd;q)_{j_1}(abcdq^{j_1};q)_{j_2}}\frac{(abcd;q)_\infty}{(bcd;q)_\infty},
		\end{align*}
		where we used four times that
		\[
		(aq^{j};q)_\infty = \frac{(a;q)_\infty}{(a;q)_j}
		\]
		for any $j\in \Z_{\geq 0}$. Therefore, \eqref{eq:rearrangesums} is equal to
		\begin{align}
			\begin{split}\frac{(abcd;q)_\infty}{(bcd;q)_\infty}&\sum_{j_1=0}^{x} A(x,j_1,a,b)\frac{(bcd;q)_{j_1}}{(abcd;q)_{j_1}}\sum_{j_2=0}^{y} A(y,j_2,a,c)\frac{(bcdq^{j_1};q)_{j_2}}{(abcdq^{j_1};q)_{j_2}}.\end{split}\label{eq:rearrangesums2}
		\end{align}
		Note that the second sum can be written as
		\begin{align*}
			(-1/c^2)^yq^{-\half y(y+1)}\frac{(c^2q;q)_y}{(a;q)_y}\rphis{3}{2}{q^{-y},ab^2q^y,bcdq^{j_1} }{c^2q,abcdq^{j_1}}{q;q}.
		\end{align*}
		Using the $q$-Saalsch\"utz formula \cite[(II.12)]{GR} for a balanced $\rphisempty{3}{2}$, we obtain\footnote{In \cite[(II,12)]{GR}, replace $(n,a,b,c)$ by $(y, ac^2q^y, bcdq^{j_1}, abcdq^{j_1})$.} 
		\begin{align*}
			\rphis{3}{2}{q^{-y},ab^2q^y,bcdq^{j_1} }{c^2q,abcdq^{j_1}}{q;q} = \frac{(bdq^{j_1-y}/c,a;q)_{y}}{(abcd  q^{j_1},q^{-y}/c^2;q)_{y}}.
		\end{align*}	
		Using
		\[
		(\alpha q^{j_1};q)_y = (\alpha;q)_y\frac{(\alpha q^{y};q)_{j_1}}{(\alpha;q)_{j_1}},
		\]
		finishes the proof.
	\end{proof}	
	\begin{remark}\*
		\begin{itemize}
			\item The $\rphisempty{4}{3}$-part in the lemma above is a rational function in the following way. Because $q^{-x}$ appears as a numerator parameter of the $\rphisempty{4}{3}$-series, the series terminates after $x$ terms. Therefore, it is of the form
			\[
				\sum_{n=0}^x \frac{1}{p_n\big(\lambda(y)\big)},
			\]
			where $p_n$ is a polynomial of degree $n$ in the variable $\lambda(y)=acq^y-q^{-y}$. Because of the symmetry
			\[
				(x,y,a,b,c,d) \leftrightarrow (y,x,a,c,b,d),
			\]
			it can also be seen as a rational function in the variable `$abq^x-q^{-x}$'.
			\item This summation formula can be interpreted as a Poisson kernel for the $q^{-1}$-Al-Salam--Chihara polynomials. The Poisson kernel for $q$-Al-Salam--Chihara polynomials with continuous variables can be found in \cite{ARS}.
		\end{itemize}
	\end{remark}	
\section{The quantum algebra $\U_q(\su_2)$ and rational functions of $q$-Racah type} \label{sec:su2rat}
In this section, we introduce the quantum algebra $\U_q(\mathfrak{su}_2)$, twisted primitive elements and show its well-known relation with the $q$-Krawtchouk polynomials. Using this, we introduce and solve a GEVP and an EVP and show, using the summation formula from Lemma \ref{lem:summation3phi2to4phi3}, that the overlap coefficients can be seen as a rational $\rphisempty{4}{3}$-function of $q$-Racah type. We will prove (bi)orthogonality as well as a recurrence relation for this function. Afterwards, we will extend these results to a multivariate setting, using the co-algebra structure of $\U_q(\mathfrak{sl}_2)$. Throughout this section, we take $q>0$ and $q \neq 1$.
\subsection{The algebra $\U_q(\mathfrak{su}_2)$ and twisted primitive elements}
The quantum algebra $\U_q(\mathfrak{su}_2)$ is the algebra $\U_q(\mathfrak{sl}_2)$ equipped with the $*$-structure which comes from the compact Lie algebra $\mathfrak{su}_2$. This is the anti-linear involution defined on the generators by
\[
	K^*=K, \quad E^*=F, \quad F^* = E, \quad (K^{-1})^* = K^{-1}.
\]
Let $u,s\in\C$, then we define the following elements in $\U_q(\mathfrak{su}_2)$, 
\begin{align*}
	X_{u,s} =& q^{u+\frac12} EK + q^{-u-\frac12}FK + [s]_q,\\
	\widetilde{X}_{u,s}=&q^{-u-\frac12} EK^{-1} + q^{u+\frac12}FK^{-1} + [s]_qK^{-2},
\end{align*}
which are almost twisted primitive elements. In particular, we have
\begin{align}
	\begin{split}\Delta(\widetilde{X}_{u,s})& = 1 \otimes (\widetilde{X}_{u,s}-[s]_qK^{-2}) + \widetilde{X}_{u,s}\otimes K^{-2}\\
	&=1 \otimes \widetilde{X}_{u,0} + \widetilde{X}_{u,s}\otimes K^{-2}. \end{split}\label{eq:DeltatildeX}
\end{align} 
That is, $\widetilde{X}_{u,s}$ generates a left co-ideal subalgebra. Note that if $s$ is real and $u$ is on the imaginary axis, then $X_{u,s}$ and $\widetilde{X}_{u,s}$ are self-adjoint in $\U_q(\mathfrak{su}_2)$. Later, we will need that we can write $X_{u,s}$ as a polynomial in $K^2$ and $\widetilde{X}_{v,t}$, similar to \cite[Lemma 4.3]{Gr21}.
\begin{lemma} \label{lem:XusinK2Xvt}
	Let $u,v,s,t\in\C$, then we have
	\begin{align*}
		X_{u,s}=\frac{q^{u+v}\big[K^2,\widetilde{X}_{v,t}\big]_q + q^{-u-v} \big[\widetilde{X}_{v,t},K^2\big]_q }{q^2-q^{-2}}- [t]_q\{u+v\}_q+[s]_q.
	\end{align*}
\end{lemma}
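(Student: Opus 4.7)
The plan is to compute both $q$-commutators on the right-hand side directly from the definition of $\widetilde{X}_{v,t}$ and the relations \eqref{eq:UqRelations}, then assemble the combination and verify it equals $X_{u,s}$.

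First I would push $K^2$ through the three summands of $\widetilde{X}_{v,t}$. From $KE=qEK$ and $KF=q^{-1}FK$ one gets $K^2 E K^{-1}=q^2 EK$ and $K^2 F K^{-1}=q^{-2}FK$, while trivially $EK^{-1}K^2=EK$ and $FK^{-1}K^2=FK$. Hence
\begin{align*}
K^2 \widetilde{X}_{v,t} &= q^{-v+3/2}\,EK + q^{v-3/2}\,FK + [t]_q,\\
\widetilde{X}_{v,t} K^2 &= q^{-v-1/2}\,EK + q^{v+1/2}\,FK + [t]_q.
\end{align*}

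Next I would form the two $q$-commutators. A small miracle occurs: in $[K^2,\widetilde{X}_{v,t}]_q=qK^2\widetilde{X}_{v,t}-q^{-1}\widetilde{X}_{v,t}K^2$ the two contributions to $FK$ are both $q^{v-1/2}$ and cancel, leaving only an $EK$ term with coefficient $q^{-v-3/2}(q^4-1)$ and a scalar $(q-q^{-1})[t]_q$. Symmetrically, $[\widetilde{X}_{v,t},K^2]_q$ kills the $EK$ term and leaves a multiple of $FK$ with coefficient $q^{v-5/2}(q^4-1)$ plus the same scalar $(q-q^{-1})[t]_q$.

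Weighting by $q^{u+v}$ and $q^{-u-v}$ respectively and dividing by $q^{2}-q^{-2}=q^{-2}(q^4-1)$, the $EK$-coefficient becomes $q^{u+1/2}$ and the $FK$-coefficient becomes $q^{-u-1/2}$, precisely matching $X_{u,s}$. The scalar contribution totals
\[
\frac{(q^{u+v}+q^{-u-v})(q-q^{-1})}{q^2-q^{-2}}\,[t]_q=\frac{q^{u+v}+q^{-u-v}}{q+q^{-1}}\,[t]_q=\{u+v\}_q[t]_q,
\]
using $\frac{q-q^{-1}}{q^2-q^{-2}}=\frac{1}{q+q^{-1}}$. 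This is exactly what is cancelled by the $-[t]_q\{u+v\}_q$ term on the right, and finally adding $[s]_q$ gives $X_{u,s}$.

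The main obstacle is nothing conceptual—just bookkeeping of powers of $q$; the key observation that makes the identity clean is that the two $q$-commutators single out the $EK$- and $FK$-summands of $\widetilde{X}_{v,t}$ individually, so that a weighted sum with weights $q^{\pm(u+v)}$ can recombine them into the asymmetric form $q^{u+1/2}EK+q^{-u-1/2}FK$ demanded by $X_{u,s}$.
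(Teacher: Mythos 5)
Your proposal is correct and follows essentially the same route as the paper: compute $K^2\widetilde{X}_{v,t}$ and $\widetilde{X}_{v,t}K^2$, observe that the two $q$-commutators isolate the $EK$ and $FK$ summands respectively (each leaving a scalar $(q-q^{-1})[t]_q$), and recombine with weights $q^{\pm(u+v)}$. Your coefficient bookkeeping checks out; if anything you are more explicit than the paper, which leaves the final linear combination to the reader.
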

\begin{proof}
	The idea is to first write both $EK$ and $FK$ in terms of $\widetilde{X}_{v,t}$, $K^2$, and a constant. Then we take a linear combination of these expressions (and a constant) to construct $X_{u,s}$. Multiplying $\widetilde{X}_{v,t}$ on the left and right by $K^2$ gives
	\begin{align*}
		\widetilde{X}_{v,t}K^2 &= q^{-v-\half}EK + q^{v+\half}FK + [t]_q,\\
		K^2\widetilde{X}_{v,t} &= q^{-v+\frac{3}{2}}EK + q^{v-\frac{3}{2}}FK + [t]_q.
	\end{align*}
	Therefore, the $q$-commutator of $K^2$ and $\widetilde{X}_{v,t}$ cancels the `$FK$' term,
	\begin{align*}
		\big[K^2,\widetilde{X}_{v,t}\big]_q &= q^{-v+\half}(q^2-q^{-2})EK + (q-q^{-1})[t]_q.
	\end{align*}
	Similarly,
	\begin{align*}
		\big[\widetilde{X}_{v,t},K^2\big]_q &= q^{v-\half}(q^2-q^{-2})FK + (q-q^{-1})[t]_q.
	\end{align*}
	Consequently, one can write both $EK$ and $FK$ in terms of $\widetilde{X}_{v,t}$, $K^2$, and a constant. Taking a suitable linear combination of these expressions and a constant proves the theorem.
\end{proof}
\subsection{A representation of $\U_q(\mathfrak{su}_2)$ and $q$-Krawtchouk polynomials} Important functions in this paper are the (dual) $q$-Krawtchouk polynomials. These are defined by
\[
	K_n(x;c,N;q)= \rphis{3}{2}{q^{-n}, q^{-x}, cq^{x-N} }{q^{-N}, 0 }{q,q}.
\]
Note that $K_n(x)$ are polynomials in $cq^{x-N}+q^{-x}$ of degree $n$, in which case they are called dual $q$-Krawtchouk polynomials. However, they are also polynomials in $q^{-n}$ of degree $x$, in which case they are referred to as $q$-Krawtchouk polynomials. In the remaining of this paper, we will refer to both interpretations of $K_n(x)$ just as $q$-Krawtchouk polynomials. We will work with $q$-Krawtchouk polynomials which are normalized slightly different and have base $q^{-2}$, 
\begin{align*}
	k_{u,s}(n,x)=k_{u,s}(n,x;N;q)&= (-1)^nq^{n(s-u-\frac12 N +\frac12)} K_n(x;-q^{-2s},N;q^{-2})\\
	&=(-1)^nq^{n(s-u-\frac12 N +\frac12)}\rphis{3}{2}{q^{2n}, q^{2x}, -q^{-2x-2s+2N} }{q^{2N}, 0 }{q^{-2},q^{-2}}.
\end{align*}
Note that 
\begin{align}
	k_{u,s}(n,x)= q^{-un}k_{0,s}. \label{eq:kusk0s}
\end{align}
If $s\in \R$, the $q^{-1}$-Krawtchouk polynomials have a finite orthogonality relation in $n$ as well as $x$,
\begin{align}
	\sum_{x=0}^N k_{0,s}(n,x)k_{0,s}(n',x)W(x,s;q^{-1}) = \frac{\delta_{n,n'}}{w(n)},\label{eq:KrawOrthx}\\
	\sum_{n=0}^N k_{0,s}(n,x)k_{0,s}(n,x')w(n) = \frac{\delta_{x,x'}}{W(x,s;q^{-1})},\label{eq:KrawOrthn}
\end{align}
where the weight functions are given by
\begin{align*}
	w(n)&=w(n,N;q) = q^{n(n-N)} \qbinom{q^2}{N}{n}=w(n,N;q^{-1}),\\
	W(x,s;q)&=W(x,s,N;q) =\frac{ 1+q^{4x+2s-2N}}{1+q^{2s-2N}} \frac{ (-q^{2s-2N};q^2)_x }{(-q^{2s+2};q^2)_x } \frac{q^{-x(2s+1+x-2N)}}{(-q^{-2s};q^2)_N} \qbinom{q^2}{N}{x}.
\end{align*}
The first can be found in \cite[(14.17.2)]{KLS}, the second is its dual\footnote{If a finite square matrix $U$ satisfies $UU^\mathrm{T}=I$, it also satisfies the dual orthogonality $U^\mathrm{T}U=I$.} orthogonality. Next, for this section, we define $H_{N}$ to be the $(N+1)$-dimensional Hilbert space of (continuous) functions $f\colon\{0,1,...,N\}\to\C$ with inner product induced by the orthogonality measure $w$ of the $q^{-1}$-Krawtchouk polynomials, 
\[
\langle f,g \rangle_{H_N} = \sum_{n=0}^{N} f(n) \overline{g(n)} w(n).
\]
Let $B(H_{N})$ be the space of linear operators on $H_{N}$ and $\pi_{N}\colon\U_q(\mathfrak{su}_2)\to B(H_{N})$ the $*$-representation defined by
\begin{equation} \label{eq:representation}
	\begin{split}
		[\pi_N(K)f](n ) &= q^{n-\frac12 N} f(n), \\
		[\pi_N(E)f](n) &= [n]_q f(n-1), \\
		[\pi_N(F) f](n) & = [N-n]_q f(n+1),\\
		[\pi_N(K^{-1}) f](n) &= q^{\frac12N-n}f(n).
	\end{split}
\end{equation}
One can easily verify that this is a $*$-representation, i.e. 
\[
	\langle \pi_N(X)f,g\rangle = \langle f, \pi_N(X^*)g\rangle
\] 
for all $X\in \U_q(\mathfrak{su}_2)$ and $f,g \in H_N$, by checking this for the generators $K,K^{-1},E$ and $F$ on the basis of delta functions.\\

It is well known (\cite{Koo}) that the $q^{-1}$-Krawtchouk polynomials $k_{u,s}(\cdot,x): \{0,...,N\}\to \C$ are eigenfunctions of $\pi_N(\widetilde{X}_{u,s})$,
\begin{align}
	[\pi_N(\widetilde{X}_{u,s})k_{u,s}(\cdot,x)](n)=[2x-N+s]_qk_{u,s}(n,x). \label{eq:widetildeXEV}
\end{align}
This follows from matching the explicit action $\pi_N(\widetilde{X}_{u,s})$ with the three-term recurrence relation of the dual $q$-Krawtchouk polynomials \cite[(14.17.3)]{KLS}.

\subsection{Rational functions of $q$-Racah type as overlap coefficient}
The elements $X_{0,s}+[s]_q(K^2-1)$ and $\widetilde{X}_{v,t}$ generate an Askey-Wilson algebra, see e.g. \cite{GranZhed} or \cite{GroeneveltWagenaar}. Consequently, overlap coefficients between their eigenfunctions are $q$-Racah polynomials. In other words, the $q$-Racah polynomials appear as overlap coefficients of the solutions of the following eigenvalue problems (EVPs),
\begin{align}
	&\pi_N^{}\big(X_{0,s}^{}+[s]_q(K^2-1)\big)f=\lambda f,\label{eq:EVPXs}\\
	&\pi_N^{}(\widetilde{X}_{v,t}^{})g=\mu g. \label{eq:EVPXt}
\end{align}
In this paper, we will be interested in the overlap coefficients between solutions of the EVP \eqref{eq:EVPXt} and the GEVP
\begin{align}
	\pi^{}_N(X_{0,s})f=\lambda \pi_N^{}(K^2) f. \label{eq:GEVPUq}
\end{align}
We can rewrite this GEVP to an EVP. Indeed, since $K^2$ is invertible, \eqref{eq:GEVPUq} is equivalent to
\begin{align*}
	\pi_N(K^{-2}X_{0,s})f=\lambda f.
\end{align*}
Using the commutation relations for $\U_q(\mathfrak{sl}_2)$, we obtain
\begin{align*}
	K^{-2}X_{0,s}^{} = \widetilde{X}_{1,s}^{}.
\end{align*}
Since $k_{1,s}(\cdot,x)$ are the solutions of the EVP
\begin{align}
	\pi_N^{}(\widetilde{X}_{1,s})f=\lambda f, \label{eq:GEVPrewritten}
\end{align}
we know that $k_{1,s}(\cdot,x)$ also solves the GEVP \eqref{eq:GEVPUq}. Let $R_\mathrm{r}(x,y)$ be the overlap coefficients between the solutions of the EVP \eqref{eq:EVPXt} and the GEVP \eqref{eq:GEVPUq},
\begin{align}
	R_\mathrm{r}(x,y)=R_\mathrm{r}(x,y;s,t,v,N;q)=\langle k_{1,s}(\cdot,x), k_{v,t}(\cdot,y)  \rangle_{H_N^{}}^{}. \label{eq:overlapcoef}
\end{align}  
Note that these are overlap coefficients between two (different) $q^{-1}$-Krawtchouk polynomials. The subscript `$\mathrm{r}$' is there to emphasize it is a rational function, as will be proved in the next proposition using the summation formula from Lemma \ref{lem:summation3phi2to4phi3}.
\begin{proposition}\label{prop:overlaprational}
	$R_\mathrm{r}(x,y)$ is a rational function of $q$-Racah type,
	\[
		R_\mathrm{r}(x,y)= 	c_1\rphis{4}{3}{q^{-2x},-q^{2x+2s-2N},-q^{s+t-v+1},q^{s-t-v+1}}{-q^{2s+2},q^{-2y+s-t-v+1},-q^{2y+s+t-2N-v+1}}{q^2;q^2},
	\]
	where 
	\[
		c_1 = (-q^{-2N+s+t-v+1};q^2)_N \frac{(-q^{-2x-2s};q^2)_{x}}{(q^{-2N};q^2)_{x}}\frac{(q^{-2y+s-t-v+1};q^2)_{y}}{(-q^{s+t-2N-v+1};q^2)_{y}}.
	\]
\end{proposition}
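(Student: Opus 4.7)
The plan is to recognise the inner product defining $R_{\mathrm r}(x,y)$ as precisely the sum on the right-hand side of Lemma \ref{lem:summation3phi2to4phi3}, after a parameter identification with base $q^2$. Concretely, I would first expand
\[
R_{\mathrm r}(x,y)=\sum_{n=0}^{N} k_{1,s}(n,x)\, k_{v,t}(n,y)\, w(n),
\]
use $k_{u,s}(n,x)=q^{-un}k_{0,s}(n,x)$ from \eqref{eq:kusk0s} to pull out the factors $q^{-n}$ and $q^{-vn}$, and collect the remaining $n$-dependent prefactors. The definition of $k_{0,s}$ contributes a factor $(-1)^n q^{n(s-\frac12 N+\frac12)}$ from each polynomial, giving an overall $q^{n(s+t-v-N)}$ times two $\rphisempty{3}{2}$'s in base $q^{-2}$.

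Next I would rewrite the weight $w(n)=q^{n(n-N)}\qbinom{q^2}{N}{n}$ via the standard identity $(q^{-2N};q^2)_n=(-1)^nq^{n^2-n-2Nn}\,(q^2;q^2)_N/(q^2;q^2)_{N-n}$, producing
\[
w(n)=(-1)^n\,q^{n(N+1)}\,\frac{(q^{-2N};q^2)_n}{(q^2;q^2)_n}.
\]
Combining with the polynomial prefactors, the summand acquires the shape
\[
\bigl(-q^{s+t-v+1}\bigr)^{n}\,\frac{(q^{-2N};q^2)_n}{(q^2;q^2)_n}\times(\text{two }\rphisempty{3}{2}\text{'s}),
\]
which is exactly the right-hand side of Lemma \ref{lem:summation3phi2to4phi3} read in base $q^2$, with the identification
\[
a=q^{-2N},\qquad b^2=-q^{2s},\qquad c^2=-q^{2t},\qquad bcd=-q^{s+t-v+1}.
\]
Since only $b^2$, $c^2$, $bcd$ and the derived quantity $bd/c=bcd/c^2=q^{s-t-v+1}$ enter the formula, the sign ambiguity in picking $b,c,d$ individually is irrelevant; the $a=q^{-N}$ branch of Lemma \ref{lem:summation3phi2to4phi3} applies, so no convergence hypothesis on $|bcd|$ is needed.

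Applying the lemma then gives an $\rphisempty{4}{3}$ with upper parameters $q^{-2x},\, ab^2q^{2x}=-q^{2s-2N+2x},\, bcd=-q^{s+t-v+1},\, bd/c=q^{s-t-v+1}$ and lower parameters $b^2q^2=-q^{2s+2},\, bdq^{-2y}/c=q^{s-t-v+1-2y},\, abcd q^{2y}=-q^{s+t-2N-v+1+2y}$, matching the stated $\rphisempty{4}{3}$ exactly. For the prefactor I would finally simplify the infinite-product quotient using $a=q^{-2N}$, namely
\[
\frac{(abcd;q^2)_\infty}{(bcd;q^2)_\infty}=(-q^{-2N+s+t-v+1};q^2)_{N},
\]
and collect the remaining finite $q$-shifted factorials $(bdq^{-2y}/c;q^2)_y/(abcd;q^2)_y$ and $(q^{-2x}/b^2;q^2)_x/(a;q^2)_x$; these assemble precisely into the constant $c_{1}$ of the statement.

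The steps are essentially bookkeeping, so no single step is genuinely hard. The main obstacle is the careful tracking of signs and $q$-powers when converting $w(n)$ to the form $(q^{-2N};q^2)_n/(q^2;q^2)_n$ and when matching the products $b^2,c^2,bcd,bd/c$; a sanity check here is the consistency relation $bd/c=bcd/c^2=q^{s-t-v+1}$, which must agree with the value obtained from the parameter choice. Once the identification with Lemma \ref{lem:summation3phi2to4phi3} is in place, the proof reduces to reading off both sides.
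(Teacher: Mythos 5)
Your proposal is correct and follows essentially the same route as the paper: expand the inner product, rewrite $w(n)$ as $(-1)^nq^{n(N+1)}(q^{-2N};q^2)_n/(q^2;q^2)_n$, and apply Lemma \ref{lem:summation3phi2to4phi3} in base $q^2$ with $a=q^{-2N}$ and the combinations $b^2=-q^{2s}$, $c^2=-q^{2t}$, $bcd=-q^{s+t-v+1}$ (the paper instantiates these as $(b,c,d)=(iq^s,iq^t,q^{-v+1})$, but as you note only the combinations matter). All parameter matchings and the simplification of the prefactor check out.
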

\begin{proof}
	Writing out the inner product in the definition of $P_\mathrm{r}(x,y)$ gives
	\begin{align*}
		R_\mathrm{r}(x,y)&= \sum_{n=0}^N q^{n(s+t-v-N+n)}\qbinom{q^2}{N}{n} \rphis{3}{2}{q^{2n}, q^{2x}, -q^{-2x-2s+2N} }{q^{2N}, 0 }{q^{-2},q^{-2}}\\
		&\hspace{5cm}\times \rphis{3}{2}{q^{2n}, q^{2y}, -q^{-2y-2t+2N} }{q^{2N}, 0 }{q^{-2},q^{-2}}.
	\end{align*}
	Since by \cite[(I.12)]{GR}, we have
	\begin{align*}
		\frac{(q^2;q^2)_N}{(q^2;q^2)_{N-n}}=(-1)^nq^{n(N+1-n)}(q^{-2N};q^2)_n,
	\end{align*} 
	the previous expression is equal to
	\begin{align*}
		&\sum_{n=0}^N (-1)^nq^{n(s+t-v+1)}\frac{(q^{-2N};q^2)_n}{(q^2;q^2)_n} \rphis{3}{2}{q^{2n}, q^{2x}, -q^{-2x-2s+2N} }{q^{2N}, 0 }{q^{-2},q^{-2}}\\
		&\hspace{5cm}\times \rphis{3}{2}{q^{2n}, q^{2y}, -q^{-2y-2t+2N} }{q^{2N}, 0 }{q^{-2},q^{-2}}.
	\end{align*}
	Therefore, the required identity follows from Lemma \ref{lem:summation3phi2to4phi3} with $q$ replaced by $q^2$ and taking $(a,b,c,d)$ to be \[(q^{-2N},iq^s,iq^t,q^{-v+1}). \qedhere\]
\end{proof}
\begin{remark} \label{rem:R_r=delta}
	In the special case when $s=t$, the summation reduces to the orthogonality \eqref{eq:KrawOrthn} of the $q^{-1}$-Krawtchouk polynomials. Thus in that case, we have $R_\mathrm{r}(x,y)=0$ when $x\neq y$. 
\end{remark}
\begin{remark} The function $R_\mathrm{r}(x,y)$ is related to some other functions.
	\begin{itemize}
		\item It is related to the rational $q$-Racah functions \cite[(57)]{CTVZ} as follows. The $\rphisempty{4}{3}$-part of that function is given by
		\[
			\rphis{4}{3}{q^{-n},bq^{n+1},a/c,ad}{aq^{x+1},aq^{-x}/c,bdq}{q,q}.
		\]
		Replacing $q$ by $q^2$ and taking 
		\begin{align*}
			x=y, n=x, a=-q^{s +t-v-2N-1}, b=-q^{2s-2N-2}, c=-q^{2t-2N-2}, d=q^{2N+2},
		\end{align*}
		gives back the $\rphisempty{4}{3}$-part of $R_\mathrm{r}$.
		\item 	It is related to the rational functions of Hahn-type $\mathcal{U}_n(x;\alpha,\beta,N)$ from \cite[(1.4)]{VinZhe} as follows. If we apply Sear's transformation \cite[(III.15)]{GR} for a terminating balanced $\rphisempty{4}{3}$ to $R_\mathrm{r}(x,y)$, we obtain, only looking at the $\rphisempty{4}{3}$-part,
		\begin{align*}
			\rphis{4}{3}{q^{-2x},-q^{2x+2s-2N},-q^{-2y-2t},q^{-2y}}{q^{-2N},q^{-2y+s-t-v-1},q^{-2y+s-t+v+1}}{q^2,q^2}.
		\end{align*}
		If we now replace $(q^{t},q^s,v)$ by $(iq^{t},iq^s, 2v-s-t-1)$ and let $q\to1$, we get
		\begin{align*}
			\rFs{4}{3}{-x,x+s-N,-y-t,-y}{-N,-y+ s-v,-y- t +v}{1},
		\end{align*}
		where $_{r}F_{s}$ is the regular hypergeometric series as in \cite{KLS}. Now let $t\to\infty$ to obtain
		\begin{align*}
			\rFs{3}{2}{-x,\ x+s-N,\ -y}{-N,-y+ s-v}{1},
		\end{align*}
		which is exactly \cite[(1.4)]{VinZhe} with $x=y, n=x,\alpha=s-v$ and $\beta=s$.\\
		\item It is a special case of Wilson's biorthogonal rational functions $r_n(z)$ in \cite[(2.10)]{Wi}. These $r_n(z)$ are defined by a product of $q$-shifted factorials and a very-well poised $\rphisempty{10}{9}$. The latter is given by
		 \begin{align*}
		 	\rphis{10}{9}{a/e, q\sqrt{a/e}, -q\sqrt{a/e}, a/z, az, q/be, q/ce, q/de, q^n/ef,q^{-n}}{\sqrt{a/e}, - \sqrt{a/e}, qz/e, q/ez,ab,ac,ad,q^{1-n}af,q^{n+1}a/e}{q,q},
		 \end{align*}
		 where $abcdef=q$. Substituting $f=q/abcde$, $z=aq^m$ and replacing $c$ by $c/b$, we get
 		 \begin{align*}
		 	\rphis{10}{9}{a/e, q\sqrt{a/e}, -q\sqrt{a/e}, q^{-m}, a^2q^m, q/be, qb/ce, q/de, acdq^{n-1},q^{-n}}{\sqrt{a/e}, - \sqrt{a/e}, qaq^m/e, q/ez,ab,ac/b,ad,q^{2-n}/cde,q^{n+1}a/e}{q,q}.
		 \end{align*}
		 Letting $b\to0$ gives
		 \begin{align*}
		 	\rphis{8}{7}{a/e, q\sqrt{a/e}, -q\sqrt{a/e}, q^{-m}, a^2q^m, q/de, acdq^{n-1},q^{-n}}{\sqrt{a/e}, - \sqrt{a/e}, aq^{m+1}/e, q^{1-m}/ae,ad,q^{2-n}/cde,q^{n+1}a/e}{q,q^2/ace}.
		 \end{align*}
		 Applying Watson's transformation formula \cite[(III.18)]{GR}, we obtain a factor times
		 \begin{align*}
		 	\rphis{4}{3}{q/ae,q/de,acdq^{n-1},q^{-n}}{aq^{m+1}/e,q^{1-m}/ae,c}{q,q}.
		 \end{align*}
		 Taking $q^2$ instead of $q$, $n=x$, $m=y$ and doing the change of parameters
		 \begin{align*}
		 	a=-iq^{t-N},\ c=-q^{2s+2},\ d=iq^{-N-t},\ e=iq^{1-s+N+v},
		 \end{align*}
		 we exactly get the $\rphisempty{4}{3}$-part of $R_\mathrm{r}(x,y)$.
		 \item In \cite[Appendix 2]{BR} a classification of biorthogonal rational functions of $q$-hypergeometric type is obtained by considering limits of elliptic hypergeometric biorthogonal functions. In this classification the $q$-Racah type rational functions we consider correspond to the case denoted by $22v2$.
	\end{itemize}
\end{remark}
\subsection{Biorthogonality and $q$-difference equations of $R_\mathrm{r}$}
	This algebraic interpretation of the rational function $R_\mathrm{r}$ can be used to derive biorthogonality relations and $q$-difference equations of this function. Let us start with the biorthogonality relations, which follow from the orthogonality relations of the $q^{-1}$-Krawtchouk polynomials \eqref{eq:KrawOrthn} and \eqref{eq:KrawOrthx}.
	\begin{proposition}\label{prop:biorthraqra}
		For $s,t\in \R$, we have the following biorthogonality relations for $R_\mathrm{r}$,
		\begin{align}
			&\sum_{x=0}^N R_\mathrm{r}(x,y;s,t,v,N)\overline{R_\mathrm{r}(x,y';s,t,-\overline{v}-2,N)}W(x,s;q^{-1})=\frac{\delta_{y,y'}}{W(y,t;q^{-1})}, \label{eq:RatRacOrthx}\\
			&\sum_{y=0}^N R_\mathrm{r}(x,y;s,t,v,N)\overline{R_\mathrm{r}(x',y;s,t,-\overline{v}-2,N)}W(y,t;q^{-1})=\frac{\delta_{x,x'}}{W(x,s;q^{-1})}.\label{eq:RatRacOrthy}
		\end{align}
	\end{proposition}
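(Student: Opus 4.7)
The plan is to prove the biorthogonality directly from the definition of $R_{\mathrm{r}}$ as an inner product of two $q^{-1}$-Krawtchouk polynomials, by expanding both factors as sums over $n$ and then applying the two orthogonality relations \eqref{eq:KrawOrthx} and \eqref{eq:KrawOrthn} in sequence. A convenient preliminary observation is that since $s,t\in\R$, the coefficients defining $k_{u,s}$ and $k_{v,t}$ as polynomials in $q^{2n}$ are real, so $\overline{k_{u,s}(n,x)}=k_{\overline u,s}(n,x)$; combined with \eqref{eq:kusk0s} this gives
\[
\overline{k_{v,t}(n,y)}=q^{-\overline{v}\,n}k_{0,t}(n,y), \qquad k_{-\overline v-2,t}(n,y')=q^{(\overline v+2)n}k_{0,t}(n,y'),\qquad k_{1,s}(n,x)=q^{-n}k_{0,s}(n,x).
\]
The reason the second argument is shifted from $v$ to $-\overline v-2$ in \eqref{eq:RatRacOrthx}--\eqref{eq:RatRacOrthy} is precisely to make the accumulated powers of $q$ from these three twists cancel, namely $q^{-n}\cdot q^{-\overline v n}\cdot q^{-n}\cdot q^{(\overline v+2)n}=1$.

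For \eqref{eq:RatRacOrthx}, I would substitute the series definition of $R_{\mathrm{r}}$ (and its complex conjugate) into the left-hand side, interchange the finite summations, and isolate the inner sum
\[
\sum_{x=0}^N k_{1,s}(n,x)\,k_{1,s}(m,x)\,W(x,s;q^{-1}) = q^{-n-m}\sum_{x=0}^{N}k_{0,s}(n,x)k_{0,s}(m,x)W(x,s;q^{-1})=q^{-n-m}\frac{\delta_{n,m}}{w(n)},
\]
where the last equality is \eqref{eq:KrawOrthx}. Substituting back collapses the double sum into a single sum over $n$ whose $q$-power factors conspire to $1$, leaving exactly $\sum_{n=0}^{N}k_{0,t}(n,y)k_{0,t}(n,y')w(n)$, which by \eqref{eq:KrawOrthn} equals $\delta_{y,y'}/W(y,t;q^{-1})$, as required.

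The proof of \eqref{eq:RatRacOrthy} is completely analogous but dual: after expansion and interchange of sums, the inner sum is now
\[
\sum_{y=0}^{N}k_{0,t}(n,y)k_{0,t}(m,y)W(y,t;q^{-1})=\frac{\delta_{n,m}}{w(n)}
\]
by \eqref{eq:KrawOrthx} applied with parameter $t$ in place of $s$; the leftover single sum then collapses via \eqref{eq:KrawOrthn} with parameter $s$ and yields $\delta_{x,x'}/W(x,s;q^{-1})$. Since all sums are finite there are no convergence issues, and the only nontrivial bookkeeping is the tracking of the $q^{-n}$ prefactors coming from \eqref{eq:kusk0s}; the main (very small) obstacle is simply to verify that the shift $v\mapsto -\overline v-2$ is the correct one that makes these prefactors cancel, which is exactly what the computation above shows.
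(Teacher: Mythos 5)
Your proof is correct and is in substance the same as the paper's: the paper packages the interchange-of-sums-plus-\eqref{eq:KrawOrthx} step as unitarity of the operator $(U_s f)(x)=\langle f,k_{0,s}(\cdot,x)\rangle_{H_N}$ and then finishes with \eqref{eq:KrawOrthn} together with exactly the cancellation of the $q^{\pm(\overline{v}+1)n}$ prefactors that you identify as the reason for the shift $v\mapsto-\overline{v}-2$. The only cosmetic difference is that the paper deduces \eqref{eq:RatRacOrthy} from the symmetry $R_\mathrm{r}(y,x;t,s)=R_\mathrm{r}(x,y;s,t)$ instead of repeating the dual computation.
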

	\begin{proof}
		This follows from the orthogonality relations \eqref{eq:KrawOrthx} and \eqref{eq:KrawOrthn} of the $q^{-1}$-Krawtchouk polynomials. For this proof, let $\mathcal{H}_N^s$ be the $(N+1)$-dimensional Hilbert space with inner product given by
			\begin{align*}
				\langle f,g\rangle_{\mathcal{H}_N^s} = \sum_{x=0}^{N}f(x)\overline{g(x)}W(x,s;q^{-1}).
			\end{align*}
			Let $U_s\colon H_N\to\mathcal{H}_N^s$ be the linear operator defined by
			\[
			(U_s f)(x)= \langle f, k_{0,s}(\cdot,x)\rangle_{H_N} = \sum_{n=0}^N f(n)k_{0,s}(n,x)w(n).
			\]
			Note that applying $U_s$ to a delta-function\footnote{That is, $\delta_n(m)=1$ if $n=m$ and $0$ otherwise.} gives $$ (U_s \delta_n)(x)=k_{0,s}(n,x)w(n).$$
			Therefore, $U_s$ is unitary since it sends the orthogonal basis of rescaled delta-functions $\{\delta_n/w\}_{n=0}^N$ to the orthogonal basis $\{k_{0,s}(\cdot,n)\}_{n=0}^N$, while preserving its norm,
			\[
			\|U_s (\delta_n/w)\|^2_{\mathcal{H}_N^s} = \sum_{x=0}^N k_{0,s}(n,x)^2W(x,s;q^{-1}) = \frac{1}{w(n)} = \|\delta_n/w\|^2_{H_N},
			\]
			where we used \eqref{eq:KrawOrthx}. Note that applying $U_s$ to $k_{v+1,t}(\cdot,y)$ gives back our rational function $R_\mathrm{r}(x,y)$. Therefore, we obtain the first (bi)orthogonality relation,
			\begin{align*}
				\langle R_\mathrm{r}(x,y;s,t,v,N), R_\mathrm{r}(x,y';s,t,-\overline{v}-2,N) \rangle_{\mathcal{H}_N^s} &= \langle U_s k_{v+1,t}(\cdot,y), U_s k_{-\overline{v}-1,t}(\cdot,y') \rangle_{\mathcal{H}_N^s}\\
				&= \langle k_{v+1,t}(\cdot,y), k_{-\overline{v}-1,t}(\cdot,y') \rangle_{H_N} \\
				&= \sum_{n=0}^N k_{v+1,t}(n,y)\overline{k_{-\overline{v}-1,t}(n,y')} w(n) \\
				&= \sum_{n=0}^N k_{0,t}(n,y)k_{0,t}(n,y') w(n)\\
				&= \frac{\delta_{y,y'}}{W(y,t;q^{-1})},
			\end{align*} 
			where we used \eqref{eq:kusk0s} and \eqref{eq:KrawOrthn}. The second biorthogonality relation \eqref{eq:RatRacOrthy} follows directly from the first since we have the symmetry $R_\mathrm{r}(y,x;t,s)=R_\mathrm{r}(x,y;s,t)$.
	\end{proof}
	\begin{remark}\label{rem:biorth=orth}
		If $v=-\overline{v}-2$ we have orthogonality instead of biorthogonality. This is the case when $\text{Re}(v)=-1$.
	\end{remark}
	\begin{remark}
		By the theory of (G)EVPs (see e.g.~(2.20) in \cite{TVZ}), we have the biorthogonality relations,
		\begin{align*}
			&\sum_{x=0}^N R_\mathrm{r}(x,y) \overline{\widetilde{R}_\mathrm{r}(x,y')} = \kappa_y \delta_{y,y'},\\
			&\sum_{y=0}^N R_\mathrm{r}(x,y) \overline{\widetilde{R}_\mathrm{r}(x',y)} = \kappa'_x \delta_{x,x'},
		\end{align*}
		for some constants $\kappa_y$ and $\kappa'_x$. Here, $\widetilde{R}_\mathrm{r}(x,y')$ is the overlap coefficient
		\[
			\widetilde{R}_\mathrm{r}(x,y) = \langle k_{v,t}^*(\cdot,y), \pi_N(K^2)k_{1,s}(\cdot,x)\rangle_{H_N},
		\] 
		where $k_{v,t}^*(n,y)$ is the solution of the adjoint EVP of \eqref{eq:EVPXt},
		\[
			\pi_N^{}(\widetilde{X}^*_{v,t})g=\mu g.
		\]
		This biorthogonality is equivalent to the one of the previous proposition. Indeed,
		since $$\widetilde{X}^*_{v,t} = \widetilde{X}_{-\overline{v},t},$$ we have $k_{v,t}^*(n,y)= k_{-\overline{v},t}(n,y)$. Moreover, by the definition of $\pi_N(K^2)$ and \eqref{eq:kusk0s},
		\[
			\pi_N(K^2)k_{1,s}(n,x)=q^{-N} k_{-1,s}(n,x).
		\]
		Therefore,
		\[
			\widetilde{R}_\mathrm{r}(x,y) = q^{-N}R_\mathrm{r}(x,y';s,t,-v-2,N),
		\]
		showing the equivalence of the biorthogonality relations.
	\end{remark}
	Let us now deduce a recurrence relation for $R_\mathrm{r}(x,y)$. For this, we need two ingredients. First, using that $k_{1,s}$ solves the GEVP \eqref{eq:GEVPUq} and that both $K^2$ and $X_{0,s}$ are self-adjoint in $\U_q(\mathfrak{su}_2)$, we have 
	\begin{align}
		[2x-N+s]_q \Big\langle k_{1,s}(\cdot,x),\pi_N(K^2)k_{v,t}(\cdot,y) \Big\rangle_{H_N} &=  \Big\langle [2x-N+s]_q\nonumber  \pi_N(K^2)k_{1,s}(\cdot,x),k_{v,t}(\cdot,y) \Big\rangle_{H_N}\\
			&= \Big\langle \pi_N(X_{0,s})k_{1,s}(\cdot,x),k_{v,t}(\cdot,y) \Big\rangle_{H_N}\label{eq:CorGEVP}\\
			&= \Big\langle k_{1,s}(\cdot,x),\pi_N(X_{0,s})k_{v,t}(\cdot,y) \Big\rangle_{H_N}.  \nonumber
	\end{align}
	Secondly, we can transfer the action of $\pi_N(K^2)$ and $\pi_N(X_{0,s})$ on the `$n$' variable of $k_{v,t}$ to the `$y$' variable, which is the content of the following lemma. 
	\begin{lemma}\label{lem:threetermKra}
		The action of the operators $\pi_N(K^2)$ and $\pi_N(X_{0,s})$ on the $n$ variable of $k_{v,t}(n,y)$ can be transferred to a $3$-diagonal action in the other variable,
		\begin{align}
			\left[\pi_N(K^2)k_{v,t}(\cdot,y)\right](n) &= a_{-1}^{} k_{v,t}(n,y-1) + a_{0}^{}k_{v,t}(n,y)+ a_{1}^{} k_{v,t}(n,y+1),\label{eq:3termK2}\\
			 \left[\pi_N(X_{0,s})k_{v,t}(\cdot,y)\right](n) &=  b_{-1}^{} k_{v,t}(n,y-1) + (b_{0}^{}+[s]_q) k_{v,t}(n,y)+ b_{1}^{} k_{v,t}(n,y+1),\label{eq:3termYs}
		\end{align}
		where
		\begin{align*}
			a_{-1}^{}(y,t) &= -\frac{q^{-4y-2t+3N+2}(1-q^{-2y})(1+q^{-2y-2t})}{(1+q^{-4y-2t+2N+2})(1+q^{-4y-2t+2N})}, \\
			a_{0}^{}(y,t) &= -(a_{-1}(y)+a_{1}(y)-1),\\
			a_{1}^{}(y,t) &= \frac{q^{-N}(1-q^{-2y+2N})(1+q^{-2y-2t+2N})}{(1+q^{-4y-2t+2N})(1+q^{-4y-2t+2N-2})},
		\end{align*}
		and
		\begin{align*}
			b_{-1}(y,t)&= a_{-1}(y,t)[2y-N+t+v-1]_q ,\\
			b_{0}(y,t)&= a_0(y,t)[2y-N+t]_q\{v\}_q- [t]_q\{v\}_q,\\
			b_{1}(y,t)&= a_1(y,t)[2y-N+t-v+1]_q .
		\end{align*}
	\end{lemma}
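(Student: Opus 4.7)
My strategy is to prove \eqref{eq:3termK2} first and then derive \eqref{eq:3termYs} from it, using Lemma \ref{lem:XusinK2Xvt} and the eigenvalue equation \eqref{eq:widetildeXEV}.

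For \eqref{eq:3termK2}, note that $\pi_N(K^2)$ acts as multiplication by the scalar $q^{2n-N}$, so the identity is purely a statement about the $q^{-1}$-Krawtchouk polynomial $k_{v,t}(n,y)$. The idea is to exploit the self-dual structure of the dual $q$-Krawtchouk polynomials: just as the three-term recurrence in the degree is implemented by $\pi_N(\widetilde{X}_{v,t})$ with eigenvalue $[2y-N+t]_q$, the dual three-term recurrence in the variable $y$ is implemented by multiplication by $q^{2n}$. Concretely, one reads off the second-order $q$-difference equation of the dual $q$-Krawtchouk polynomials (e.g.\ \cite[(14.17.5)]{KLS} with $q$ replaced by $q^{-2}$), which expresses $(q^{2n}-1) k_{v,t}(n,y)$ as a three-term combination of $k_{v,t}(n,y-1)$, $k_{v,t}(n,y)$, $k_{v,t}(n,y+1)$. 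Writing $q^{2n-N} = q^{-N}(q^{2n}-1)+q^{-N}$ then produces \eqref{eq:3termK2}, and the explicit coefficients $a_{-1},a_0,a_1$ follow after accounting for the normalization prefactor $(-1)^n q^{n(s-u-N/2+1/2)}$ relating $k_{v,t}$ to the standard $K_n$ used in the paper's conventions.

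For \eqref{eq:3termYs}, I would apply Lemma \ref{lem:XusinK2Xvt} with $u=0$ to express
\[
	X_{0,s} = \alpha\, K^2 \widetilde{X}_{v,t} + \beta\, \widetilde{X}_{v,t}\, K^2 - [t]_q\{v\}_q + [s]_q,
\]
with $\alpha = (q^{v+1}-q^{-v-1})/(q^2-q^{-2})$ and $\beta = (q^{-v+1}-q^{v-1})/(q^2-q^{-2})$, which satisfy $\alpha+\beta = \{v\}_q$. Acting on $k_{v,t}(\cdot, y)$: in the term $K^2 \widetilde{X}_{v,t}$ the eigenvalue $[2y-N+t]_q$ from \eqref{eq:widetildeXEV} is read off first and then $K^2$ acts by \eqref{eq:3termK2}; in $\widetilde{X}_{v,t} K^2$ the three-term shift in $y$ happens first, and the eigenvalue $[2(y\pm 1)-N+t]_q$ is read off at the shifted argument. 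Writing $A = 2y-N+t$, collecting coefficients of $k_{v,t}(\cdot, y\pm 1)$ yields $a_{\pm 1}(y)\bigl(\alpha[A]_q + \beta[A\pm 2]_q\bigr)$. The $q$-number identity
\[
	\alpha\,[A]_q + \beta\,[A\pm 2]_q = [A \mp v \pm 1]_q,
\]
which is a direct consequence of the definitions of $\alpha,\beta$ and the addition rule $[m+n]_q = q^n[m]_q + q^{-m}[n]_q$, immediately reproduces the stated $b_{\pm 1}(y,t)$. The diagonal piece collapses into $(\alpha+\beta) a_0(y)[2y-N+t]_q = \{v\}_q a_0(y)[2y-N+t]_q$, which together with the constants $-[t]_q\{v\}_q + [s]_q$ gives $b_0(y,t)+[s]_q$.

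The main obstacle is the explicit form of the coefficients in \eqref{eq:3termK2}: the $q$-difference equation of the dual $q$-Krawtchouk polynomials must be carefully transported to base $q^{-2}$, rescaled through the normalization prefactor of $k_{v,t}$, and simplified to the compact expressions for $a_{-1}, a_0, a_1$ in the statement. Once \eqref{eq:3termK2} is secured, the deduction of \eqref{eq:3termYs} is a short, purely algebraic computation.
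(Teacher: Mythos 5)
Your proposal is correct and follows essentially the same route as the paper: \eqref{eq:3termK2} is read off from the $q$-difference equation \cite[(14.17.5)]{KLS} with $q\mapsto q^{-2}$ (the $n$-dependent normalization prefactor being independent of $y$, it simply cancels), and \eqref{eq:3termYs} is obtained by applying Lemma \ref{lem:XusinK2Xvt} with $u=0$ together with the eigenvalue equation \eqref{eq:widetildeXEV}; your coefficients $\alpha,\beta$ are exactly the expansion of the $q$-commutator expression in the paper, and your identity $\alpha[A]_q+\beta[A\pm2]_q=[A\mp v\pm1]_q$ reproduces the paper's computation of the quantities $A$ and $D$.
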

	\begin{proof}
		Since $\pi_N^{}(K^2)$ is just multiplication by $q^{2n-N}$, the first equation \eqref{eq:3termK2} follows from the 3-term recurrence relations of the $q$-Krawtchouk polynomials (\cite[(14.17.5)]{KLS}, where we replace $q$ by $q^{-2}$ and multiply both sides by $q^{-N}$.\\
		\indent The other equation follows from Lemma \ref{lem:XusinK2Xvt}, which shows that we can write $X_{u,s}$ in terms of $K^2$ and $\widetilde{X}_{v,t}$, combined with \eqref{eq:3termK2} and the eigenvalue equation \eqref{eq:widetildeXEV} for $\widetilde{X}_{v,t}$:
		\begin{align}
			\big[\pi_N(\widetilde{X}_{v,t})k_{v,t}(\cdot,y)\big](n)=[2y-N+t]k_{v,t}(\cdot,y). \label{eq:Xtuev}
		\end{align}
		From Lemma \ref{lem:XusinK2Xvt} with $u=0$ we get
		\begin{align}
			X_{0,s}=\frac{q^{v}\big[K^2,\widetilde{X}_{v,t}\big]_q + q^{-v} \big[\widetilde{X}_{v,t},K^2\big]_q }{q^2-q^{-2}}- [t]_q\{v\}_q+[s]_q,\label{eq:Xs1K2Xtu}
		\end{align}
		If we now use the expressions \eqref{eq:3termK2} and \eqref{eq:Xtuev} for $K^2$ and $\widetilde{X}_{v,t}$ respectively, we obtain
		\begin{align*}
			\big[\pi_N(X_{0,s})k_{v,t}(\cdot,x)\big](n) =  Aa_{-1}(y)k_{v,t}(n,y-1)+\bigg[a_{0}(y)B +C\bigg]k_{v,t}(n,y)+a_{1}(y)Dk_{v,t}(n,y+1)
		\end{align*}
		where
		\begin{align*}
			A&=\frac{[2y-N+t]_q)(q^{v+1}-q^{-v-1})+[2y-2-N+t]_q(q^{-v+1}-q^{v-1}) }{q^2-q^{-2}} \\
			&= [2y-N+t+v-1]_q, \\
			B&= \frac{(q-q^{-1})(q^v+q^{-v})[2y-N+t]_q}{q^2-q^{-2}} \\
			&= [2y-N+t]_q\{v\}_q ,\\
			C&= [s]_q- [t]_q\{v\}_q, \\
			D&=\frac{[2y-N+t]_q)(q^{v+1}-q^{-v-1})+[2y+2-N+t]_q(q^{-v+1}-q^{v-1}) }{q^2-q^{-2}}\\
			&= [2y-N+t-v+1]_q. \qedhere
		\end{align*}
	\end{proof}
 	Combining \eqref{eq:CorGEVP} and Lemma \ref{lem:threetermKra}, we obtain a GEVP for $R_\mathrm{r}(x,y)$.
	\begin{corollary}\label{cor:GEVPR(x,y)}
		The rational function $R_\mathrm{r}(x,y)$ satisfies
		\begin{align*}
			[2x-N+s]_q\Big(a_{-1}^{}(y,t) &R_\mathrm{r}(x,y-1) + a_{0}^{}(y,t) R_\mathrm{r}(x,y)+ a_{1}^{}(y,t) R_\mathrm{r}(x,y+1)\Big) \\
			&= b_{-1}(y,t) R_\mathrm{r}(x,y-1) + (b_{0}(y,t)+[s]_q) R_\mathrm{r}(x,y)+ b_{1}(y,t) R_\mathrm{r}(x,y+1).
		\end{align*}
	\end{corollary}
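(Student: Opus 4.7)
The plan is to derive the claimed recurrence as a direct algebraic consequence of the two ingredients already established: the identity \eqref{eq:CorGEVP}, which converts the GEVP satisfied by $k_{1,s}(\cdot,x)$ into a relation between two inner products, and the three-term expansions in Lemma \ref{lem:threetermKra}, which describe the action of $\pi_N(K^2)$ and $\pi_N(X_{0,s})$ on the $y$-variable of $k_{v,t}$. The strategy is to substitute these three-term relations into each side of \eqref{eq:CorGEVP} and then recognize the resulting inner products as values of $R_\mathrm{r}$ via the defining formula \eqref{eq:overlapcoef}.

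Concretely, I would first rewrite the left-hand side of \eqref{eq:CorGEVP} using \eqref{eq:3termK2}, obtaining
\[
[2x-N+s]_q \Big\langle k_{1,s}(\cdot,x),\ a_{-1}(y,t)k_{v,t}(\cdot,y-1) + a_0(y,t)k_{v,t}(\cdot,y) + a_1(y,t)k_{v,t}(\cdot,y+1) \Big\rangle_{H_N},
\]
and expand the right-hand side analogously using \eqref{eq:3termYs}. Using (anti)linearity of $\langle\cdot,\cdot\rangle_{H_N}$ to pull each scalar coefficient outside the inner product, and then identifying each remaining pairing $\langle k_{1,s}(\cdot,x), k_{v,t}(\cdot, y\pm 1)\rangle_{H_N}$ or $\langle k_{1,s}(\cdot,x), k_{v,t}(\cdot, y)\rangle_{H_N}$ with $R_\mathrm{r}(x, y\pm 1)$ or $R_\mathrm{r}(x,y)$ respectively, one reads off precisely the two sides of the asserted identity.

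The only subtle point is the complex conjugation that appears when extracting scalars from the second slot of the Hermitian inner product, since the coefficients $b_{-1}, b_0, b_1$ involve the possibly complex parameter $v$. I would handle this by first establishing the identity for real $s,t,v$, where $[s]_q$, $[t]_q$, $\{v\}_q$ and hence all $a_i, b_i$ are real and the conjugation is inert, and then extending to arbitrary complex parameters by analytic continuation: by Proposition \ref{prop:overlaprational} together with the explicit formulas in Lemma \ref{lem:threetermKra}, both sides of the identity are rational functions of $q^s, q^t, q^v$, so equality on a real parameter set forces equality for all admissible complex values. Apart from this mild technicality the argument is a one-line combination of \eqref{eq:CorGEVP} and Lemma \ref{lem:threetermKra}, and no separate obstacle arises.
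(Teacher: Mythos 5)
Your proposal matches the paper's proof exactly: the corollary is presented there as an immediate consequence of combining \eqref{eq:CorGEVP} with Lemma \ref{lem:threetermKra}, which is precisely the substitution-and-identification you carry out. Your extra care with the antilinearity of the second slot of $\langle\cdot,\cdot\rangle_{H_N}$ (establishing the identity for real parameters and extending by analytic continuation in $q^v$) addresses a point the paper silently glosses over, and is sound.
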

	\begin{remark}
		The rational function $R_\mathrm{r}(x,y)$ also satisfies an EVP. To see this, first note that
		\[
			R_\mathrm{r}(x,y)= \Big\langle k_{1,s}(\cdot,x),k_{v,t}(\cdot,y) \Big\rangle_{H_N} = \Big\langle k_{1+v,s}(\cdot,x),k_{0,t}(\cdot,y) \Big\rangle_{H_N}.
		\] 
		Recall that we can rewrite the GEVP \eqref{eq:GEVPUq} into the EVP \eqref{eq:GEVPrewritten}. Therefore,
		\begin{align}
			\begin{split} [2x-N+s]_q \Big\langle k_{1+v,s}(\cdot,x),k_{0,t}(\cdot,y) \Big\rangle_{H_N} &=  \Big\langle \pi_N(\widetilde{X}_{1+v,s})k_{1+v,s}(\cdot,x),k_{0,t}(\cdot,y) \Big\rangle_{H_N} \\
			&= \Big\langle k_{1+v,s}(\cdot,x),\pi_N(\widetilde{X}_{1+v,s})^*k_{0,t}(\cdot,y)  \Big\rangle_{H_N}. \end{split}\label{eq:remEVPR}
		\end{align}
		It is possible to transfer the action of $\pi_N(\widetilde{X}_{1+v,s})^*$ on the $n$-variable of $k_{0,t}$ to the $y$-variable. From the proof of Proposition \ref{prop:biorthraqra}, we have the unitary operator $U_t\colon H_N\to\mathcal{H}_N^t$.  This allows us to define a $*$-representation $\rho\colon \U_q(\mathfrak{su}_2)\to B(\mathcal{H}_N^s)$ by
		\[
			\rho(X)= U_t \pi_N (X) U_t^{-1},
		\]
		which satisfies
		\begin{align*}
			\big[\pi_N(X)k_{0,t}(n,\cdot)\big](y) =\big[\rho(X^*)k_{0,t}(n,\cdot)\big](y).
		\end{align*}
		To see this, observe that
		\begin{align*}
			w(n)\big[\pi_N(X)k_{0,t}(\cdot,y)\big](n) &= \langle \delta_n,\pi_N(X)k_{0,t}(\cdot,y) \rangle_{H_N} = \langle \pi_N(X^*)\delta_n,k_{0,t}(\cdot,y) \rangle_{H_N} = \big[U_t\pi(X^*)\delta_n\big](y) \\
			&= \big[\rho(X^*)U_t\delta_n\big](y)= w(n)\big[ \rho(X^*)k_{0,t}(n,\cdot)\big](y).
		\end{align*}
		Therefore, there exist constants $\{\alpha_y\}_{y=0}^N$ such that
		\[
			\pi_N(\widetilde{X}_{1+v,s})^*k_{0,t}(\cdot,y)(n) = \rho(\widetilde{X}_{1+v,s})k_{0,t}(n,\cdot)(y) = \sum_{y=0}^N \alpha_y k_{0,t}(n,y).
		\]
		Thus from \eqref{eq:remEVPR} we get
		\[
			[2x-N+s]_q R_\mathrm{r}(x,y) = \sum_{y=0}^N \alpha_y R_\mathrm{r}(x,y).
		\]
		Note that it is quite difficult to get an explicit expression for the coefficients $\alpha_y$ in this EVP. 
	\end{remark}
\subsection{Multivariate rational functions of $q$-Racah type}
	Define the $j$-th coproduct $\Delta^j\colon \U_q(\mathfrak{sl}_2)\to U_q(\mathfrak{sl}_2)^{\otimes (j+1)}$ recursively by
	\begin{align*}
		\Delta^j = (\Delta \otimes 1^{\otimes (j-1)}) \Delta^{j-1},\qquad j\geq 1,
	\end{align*}
	with the convention that $\Delta^0$ is the identity on $\U_q(\mathfrak{sl}_2)$. For $\vec{N}=(N_1,\ldots,N_M)\in(\Z_{\geq0})^M$, let $H_{\vec{N}}$ denote the algebraic tensor product $H_{N_1}\otimes \cdots \otimes H_{N_M}$. Then $H_{\vec{N}}$ is isomorphic with the Hilbert space of functions on $\{0,\ldots,N_1\}\times \cdots \times \{0,\ldots,N_M\}$, with inner product
	\[
		\langle f, g \rangle_{H_{\vec{N}}} = \sum_{n_1=0}^{N_1}\cdots \sum_{n_M=0}^{N_M} f(\vec{n})\overline{g(\vec{n})} w(\vec{n}),
	\]
	where $w(\vec{n})$ is the product of the univariate weight functions,
	\[
		w(\vec{n}) = \prod_{j=1}^M w(n_j).
	\]
	Define the tensor product representation $\pi_{\vec{N}}$ of $\U_q(\mathfrak{sl}_2)^{\otimes M}$ by
	\[
		\pi_{\vec{N}}(X_1\otimes \cdots \otimes X_M) = \pi_{N_1}(X_1)\otimes \cdots \otimes \pi_{N_M}(X_M).
	\]
	We will often look at elements in $\U_q(\mathfrak{sl}_2)^{\otimes M}$ that have the `$j-1$'-th coproduct of $X\in\U_q(\mathfrak{sl}_2)$ on the left or right `$j$' spots and ones on the other spots. To ease notation, define 
	\[
		\Delta^{j-1}_\mathrm{L}= \Delta^{j-1}(X)\otimes \underbrace{ 1 \otimes \cdots \otimes 1}_{M-j\ \text{times}} \qquad \text{and} \qquad \Delta^{j-1}_\mathrm{R} = \underbrace{ 1 \otimes \cdots \otimes 1}_{M-j\ \text{times}}\otimes\Delta^{j-1}(X).
	\]
	Now we can define the following multivariate analogue of the GEVP \eqref{eq:GEVPUq} and EVP \eqref{eq:EVPXt}. We want to find $f,g \in H_{\vec{N}}$ such that for all $j=1,\ldots,M,$
\begin{align}
	\pi_{\vec{N}}  \Big(\Delta^{j-1}_\mathrm{L}\big(X_{0,s}\big)\Big)f&=\lambda_j \pi_{\vec{N}}^{}\Big(\Delta^{j-1}_\mathrm{L}\big(K^2\big)\Big) f, \label{eq:GEVPUqmulti}\\
	\pi_{\vec{N}}\Big(\Delta^{j-1}_\mathrm{L}\big(\widetilde{X}_{v,t}\big)\Big)g&= \mu_j g. \label{eq:EVPXtmulti}
\end{align}
Again, we can rewrite \eqref{eq:GEVPUqmulti} to an EVP by multiplying both sides by $\pi_{\vec{N}}^{}\big(\Delta^{j-1}_\mathrm{L}(K^{-2})\big)$,
\begin{align}
	\pi_{\vec{N}}\Big(\Delta^{j-1}_\mathrm{L}(\widetilde{X}_{1,s})\Big)f&= \lambda_j f.\label{eq:GEVPrewrittenmulti}
\end{align}
Let us first solve these EVPs. It is well known \cite{KoeJeu} that a nested product of $q^{-1}$-Krawtchouk polynomials are eigenfunctions of $\pi_{\vec{N}}\big(\Delta^{j-1}_\mathrm{L}(\widetilde{X}_{v,t})\big)$. Let
\begin{align*}
	K_{v,t}\big(\vec{n},\vec{y}\big) = \prod_{j=1}^{M} k^{}_{v,h^{}_{j-1}(\vec{y},t)}(n_j,y_j),
\end{align*}
where the height function $h_j$ is defined recursively by,
\begin{align*}
	h_j&=h_{j-1}+2y_j - N_j,\qquad j \geq 1,\\
	h_0&= t,
\end{align*}
so that 
\begin{align*}
	h_{j}=h_j(\vec{y},t)=t + \sum_{i=1}^j 2y_i - N_i.
\end{align*} 
$K_{v,t}$ has a nested structure in the sense that $k^{}_{v,h^{}_{j-1}(\vec{y},t)}(n_j,y_j)$ depends on $y_1,\ldots,y_{j-1}$ via the height function $h^{}_{j-1}(\vec{y},t)$. The next proposition shows that $f=K_{1,s}\big(\vec{n},\vec{x}\big)$ and $g=K_{v,t}\big(\vec{n},\vec{y}\big)$ solve the EVP's \eqref{eq:GEVPrewrittenmulti} with $\lambda_j=[h_j(\vec{x},s)]_q$ and \eqref{eq:EVPXtmulti} with $\mu_j=[h_j(\vec{y},t)]_q$ respectively.
\begin{proposition}\label{prop:multivarkrawEV}
	$K_{v,t}\big(\vec{n},\vec{y}\big)$	are eigenfunctions of $\pi_{\vec{N}}\big(\Delta^{j-1}_\mathrm{L}(\widetilde{X}_{v,t})\big)$ for $j=1,...,M$,
	\begin{align}
		\big[\pi_{\vec{N}}\Big(\Delta^{j-1}_\mathrm{L}(\widetilde{X}_{v,t})\Big)K_{v,t}(\cdot,\vec{y})\big](\vec{n}) = [h_{j}(\vec{y},t)]_q K_{v,t}(\vec{n},\vec{y}). \label{eq:deltaXtuev}
	\end{align}
\end{proposition}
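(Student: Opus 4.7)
The plan is a short induction on $j$, with $M$ fixed. For the base case $j=1$, the operator $\pi_{\vec{N}}\big(\widetilde{X}_{v,t}\otimes 1^{\otimes(M-1)}\big)$ acts nontrivially only on the first tensor factor, so by \eqref{eq:widetildeXEV} it multiplies $k_{v,t}(\cdot,y_1)$ by $[2y_1-N_1+t]_q=[h_1(\vec{y},t)]_q$ while leaving the remaining factors of $K_{v,t}(\vec{n},\vec{y})$ untouched.

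For the inductive step, the key structural input is the recursion
\begin{equation*}
\Delta^{j-1}(\widetilde{X}_{v,t}) \;=\; \Delta^{j-2}(\widetilde{X}_{v,t}) \otimes K^{-2} \,+\, 1^{\otimes (j-1)} \otimes \widetilde{X}_{v,0},
\end{equation*}
which I would derive by rewriting the iterated coproduct as $\Delta^{j-1}=(\Delta^{j-2}\otimes\mathrm{id})\circ\Delta$ (an immediate consequence of the recursive definition together with coassociativity), applying the base formula \eqref{eq:DeltatildeX}, and using $\Delta^{j-2}(1)=1^{\otimes(j-1)}$. Tensoring with identities on the remaining $M-j$ slots, this splits $\pi_{\vec{N}}\big(\Delta^{j-1}_{\mathrm{L}}(\widetilde{X}_{v,t})\big)$ into two commuting summands: one acting as $\pi_{\vec{N}}\big(\Delta^{j-2}_{\mathrm{L}}(\widetilde{X}_{v,t})\big)$ on the first $j-1$ slots combined with $\pi_{N_j}(K^{-2})$ on the $j$-th slot, and one supported only on the $j$-th slot and acting there as $\pi_{N_j}(\widetilde{X}_{v,0})$.

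Applying each summand to $K_{v,t}(\cdot,\vec{y})$, the first yields $[h_{j-1}(\vec{y},t)]_q$ by the inductive hypothesis together with the scalar $q^{N_j-2n_j}$ coming from $\pi_{N_j}(K^{-2})$ acting on $k_{v,h_{j-1}}(\cdot,y_j)$. For the second summand, I would use the rearrangement $\widetilde{X}_{v,0}=\widetilde{X}_{v,h_{j-1}}-[h_{j-1}]_q K^{-2}$ (immediate from the definition of $\widetilde{X}_{u,s}$) and then \eqref{eq:widetildeXEV} with shift $s=h_{j-1}(\vec{y},t)$ to obtain the contribution $\big([h_j]_q-[h_{j-1}]_q q^{N_j-2n_j}\big)k_{v,h_{j-1}}(n_j,y_j)$ on the $j$-th slot. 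Adding the two contributions, the $[h_{j-1}]_q q^{N_j-2n_j}$ terms cancel and the overall eigenvalue is $[h_j(\vec{y},t)]_q$, as claimed. The main obstacle, if any, is purely bookkeeping: verifying the recursion for $\Delta^{j-1}(\widetilde{X}_{v,t})$ and keeping track of the partial coproduct against the surrounding identity tensor factors so that the inductive hypothesis applies cleanly on the first $j-1$ slots; once this is in place the cancellation is automatic.
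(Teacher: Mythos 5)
Your proof is correct and follows essentially the same route as the paper: induction on $j$, the coassociativity identity $\Delta^{j-1}=(\Delta^{j-2}\otimes\mathrm{id})\circ\Delta$ combined with \eqref{eq:DeltatildeX}, and the eigenvalue equation \eqref{eq:widetildeXEV} with shifted parameter. The only (immaterial) difference is that the paper first recombines the two summands on the $j$-th slot into $\widetilde{X}_{v,h_{j-1}}$ and applies the eigenvalue equation once, whereas you apply it to $\widetilde{X}_{v,h_{j-1}}-[h_{j-1}]_qK^{-2}$ and let the $K^{-2}$ contributions cancel; also note the two summands do not actually commute, but nothing in the argument uses that.
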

\begin{proof}
	We prove this using induction on $j$. Note that for any integer $j$ between $1$ and $M$ we have
	\[
	K_{v,t}(\vec{n},\vec{y}) = K_{v,t}((n_1,\ldots,n_j),(y_1,\ldots,y_j)) K_{v,h_j}\big((n_{j+1},\ldots,n_M),(y_{j+1},\ldots,y_M)\big).
	\] 
	Therefore, the case $j=1$ is just \eqref{eq:Xtuev}. Moreover, using \eqref{eq:DeltatildeX}, we have
	\begin{align*}
		\Delta^{j}(\widetilde{X}_{v,t}) &= (\Delta^{j-1}\otimes 1)\Delta(\widetilde{X}_{v,t}) \\
		&= (\Delta^{j-1}\otimes 1) (1 \otimes (\widetilde{X}_{v,t}-[t]_qK^{-2}) + \widetilde{X}_{v,t}\otimes K^{-2}) \\
		&= \underbrace{ 1 \otimes \cdots \otimes 1}_{j\ \text{times}}\otimes (\widetilde{X}_{v,t}-[t]_qK^{-2}) + \Delta^{j-1}(\widetilde{X}_{v,t})\otimes K^{-2}.
	\end{align*} 
	Also note that $\pi_N(1)=I$, where $I$ is the identity operator on $H_N$. Now, let $h_j=h_j(\vec{y},t)$ and assume \eqref{eq:deltaXtuev} holds for $j$, then 
	\begin{align*}
		\pi_{\vec{N}}\Big(\Delta^j_\mathrm{L}(\widetilde{X}_{v,t})\Big)K_{v,t}(\cdot,\vec{y}) &= \Big(\pi^{}_{N_{j+1}}(\widetilde{X}_{v,t}-[t]_qK^{-2}) + [h_j]_q  \pi^{}_{N_{j+1}}(K^{-2})\Big)K_{v,t}(\cdot,\vec{y}) \\
		&= \pi_{N_{j+1}}^{}\big(\widetilde{X}_{v,h_{j}} \big)K_{v,t}(\cdot,\vec{y}),
	\end{align*} 
	where $\pi^{}_{N_{j+1}}$ acts on the variable $n_{j+1}$. Since \eqref{eq:Xtuev} holds equally well for $t=h_{j}$, we have
	\[
		\pi_{N_{j+1}}^{}\big(\widetilde{X}_{v,h_{j}} \big)K_{v,t}(\cdot,\vec{y}) = [2x_{j+1}-N_{j+1}+h_j]_q K_{v,t}(\cdot,\vec{y}),
	\]
	thus \eqref{eq:deltaXtuev} is true for $j+1$ as well.
\end{proof}
\begin{remark}
	The proof above shows why the height function $h_j$ appears. Each step of the induction, we have to use \eqref{eq:Xtuev} with $t$ replaced by $h_j$, which adds `$2x_{j+1}-N_{j+1}$' to $h_j$ to obtain the eigenvalue $[2x_{j+1}-N_{j+1}+h_j]_q=[h_{j+1}]_q$. 
\end{remark}
Let us now define the overlap coefficients of the solutions to the GEVPs and EVPs,
\begin{align*}
	R_\mathrm{r}(\vec{x},\vec{y})=R_\mathrm{r}(\vec{x},\vec{y};s,t,v,\vec{N};q)= \langle K_{1,s}(\cdot,\vec{x}),K_{v,t}(\cdot,\vec{y})  \rangle_{H_{\vec{N}}}.
\end{align*}
Since
\[
\langle K_{1,s}(\cdot,\vec{x}),K_{v,t}(	\cdot,\vec{y})  \rangle_{H_{\vec{N}}} = \prod_{j=1}^M \langle k_{1,h^{}_{j-1}(\vec{x},s)}(\cdot,x_j), k_{v,h^{}_{j-1}(\vec{y},t)}(\cdot,y_j) \rangle_{H_{N_j}}^{},
\]
it follows from Proposition \ref{prop:overlaprational} that $R_\mathrm{r}(\vec{x},\vec{y})$ is a nested product of the univariate rational functions $R_\mathrm{r}(x,y)$.
\begin{corollary} We have
	\begin{align*}
		R_\mathrm{r}(\vec{x},\vec{y})=\prod_{j=1}^M R_\mathrm{r}(x_j,y_j;h_{j-1}(s,\vec{x}),h_{j-1}(t,\vec{y}),v,\vec{N};q).
	\end{align*}
\end{corollary}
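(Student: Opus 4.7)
The plan is to reduce the multivariate statement to the univariate Proposition \ref{prop:overlaprational} by exploiting the product structure of the weight $w(\vec{n})$ on $H_{\vec{N}}$ together with the nested-product form of the eigenfunctions $K_{v,t}(\vec{n},\vec{y})$. The crux is that although the height parameters $h_{j-1}(\vec{x},s)$ and $h_{j-1}(\vec{y},t)$ couple different coordinates, the dependence on $\vec{n}$ factorises cleanly, so the multivariate inner product separates into a product of univariate inner products.

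I would first substitute the definitions directly:
\begin{align*}
R_\mathrm{r}(\vec{x},\vec{y}) &= \sum_{n_1=0}^{N_1}\cdots \sum_{n_M=0}^{N_M} K_{1,s}(\vec{n},\vec{x})\,\overline{K_{v,t}(\vec{n},\vec{y})}\,w(\vec{n}) \\
&= \sum_{n_1=0}^{N_1}\cdots \sum_{n_M=0}^{N_M} \prod_{j=1}^{M} k_{1,h_{j-1}(\vec{x},s)}(n_j,x_j)\,\overline{k_{v,h_{j-1}(\vec{y},t)}(n_j,y_j)}\,w(n_j).
\end{align*}
Crucially, the height $h_{j-1}(\vec{x},s)$ depends only on $x_1,\ldots,x_{j-1}$, and likewise $h_{j-1}(\vec{y},t)$ depends only on $y_1,\ldots,y_{j-1}$. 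Hence $h_{j-1}(\vec{x},s)$ and $h_{j-1}(\vec{y},t)$ are independent of every $n_i$, so each factor in the product depends on a single summation variable $n_j$. Interchanging the product with the sum (which is legitimate since all sums are finite) splits the multiple sum into a product of independent single sums.

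Next, I would identify each factor with a univariate inner product in $H_{N_j}$:
\begin{align*}
\sum_{n_j=0}^{N_j} k_{1,h_{j-1}(\vec{x},s)}(n_j,x_j)\,\overline{k_{v,h_{j-1}(\vec{y},t)}(n_j,y_j)}\,w(n_j) = \langle k_{1,h_{j-1}(\vec{x},s)}(\cdot,x_j),\, k_{v,h_{j-1}(\vec{y},t)}(\cdot,y_j) \rangle_{H_{N_j}}.
\end{align*}
By the definition \eqref{eq:overlapcoef} of the univariate overlap, this equals $R_\mathrm{r}(x_j,y_j;h_{j-1}(\vec{x},s),h_{j-1}(\vec{y},t),v,N_j;q)$, and Proposition \ref{prop:overlaprational} expresses it as the advertised rational function. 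Taking the product over $j$ yields the claim.

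The argument is essentially book-keeping; there is no real obstacle beyond checking that the height functions indeed introduce no coupling with the $\vec{n}$ variables. The only mild subtlety is notational consistency — one must confirm that the parameter $s$ in the univariate $R_\mathrm{r}$ plays the role of the $(j-1)$-st height in each factor, which is exactly how the nested $q^{-1}$-Krawtchouk polynomials in $K_{1,s}$ and $K_{v,t}$ are defined. Once this is observed, the factorisation is immediate.
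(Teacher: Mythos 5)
Your proposal is correct and is essentially the paper's own argument: the paper also factorises $\langle K_{1,s}(\cdot,\vec{x}),K_{v,t}(\cdot,\vec{y})\rangle_{H_{\vec{N}}}$ into the product of univariate inner products $\langle k_{1,h_{j-1}(\vec{x},s)}(\cdot,x_j),k_{v,h_{j-1}(\vec{y},t)}(\cdot,y_j)\rangle_{H_{N_j}}$ and then invokes Proposition \ref{prop:overlaprational}. You merely spell out the interchange of sum and product that the paper leaves implicit.
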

\begin{remark}
	The nested structure of $R_\mathrm{r}$ is similar to the one of the multivariate $q$-Racah polynomials of Tratnik-type \cite{GRmulti}. 
\end{remark}
The multivariate rational functions $R_\mathrm{r}(\vec{x},\vec{y})$ satisfy a (bi)orthogonality relation. This follows from the orthogonality relations \eqref{eq:KrawOrthn} and \eqref{eq:KrawOrthx} of the univariate $q^{-1}$-Krawtchouk polynomials. Indeed, from those we also get orthogonality relations of the multivariate $q^{-1}$-Krawtchouk polynomials,
\begin{align}
	&\sum_{n^{}_M=0}^{N_M} \cdots \sum_{n^{}_1=0}^{N_1} K_{0,s}(\vec{n},\vec{x})K_{0,s}(\vec{n},\vec{x}')w(\vec{n}) = \frac{\delta_{\vec{x},\vec{x}'}}{W(\vec{x},s;q^{-1})},\label{eq:KrawOrthnMulti}\\
	&\sum_{x^{}_1=0}^{N_1} \cdots \sum_{x^{}_M=0}^{N_M} K_{0,s}(\vec{n},\vec{x})K_{0,s}(\vec{n},\vec{x}')W(\vec{x},s;q^{-1}) = \frac{\delta_{\vec{n},\vec{n}'}}{w(\vec{n})},\label{eq:KrawOrthxMulti}
\end{align}
where the weight function $W(\vec{x},s;q^{-1})$ is a nested product of the univariate ones,
\begin{align*}
	W(\vec{x},s;q^{-1}) &= \prod_{j=1}^M W(x_j,h_{j-1}(\vec{x},s);q^{-1}).
\end{align*}
The first orthogonality relation follows from orthogonality of the univariate $q^{-1}$-Krawtchouk polynomials by first summing over $n^{}_1$, then $n^{}_2$, etc., the second by starting summing over $x^{}_M$, then $x^{}_{M-1}$, up to $x^{}_1$. Since $R_\mathrm{r}(\vec{x},\vec{y})$ is the inner product of (almost) orthogonal basis functions, it is biorthogonal.
\begin{proposition}
	The multivariate rational functions $R_\mathrm{r}(\vec{x},\vec{y})$ satisfy the following biorthogonality relations,
	\begin{align}
		&\sum_{x^{}_1=0}^{N_1}\cdots \sum_{x^{}_M=0}^{N_M} R_\mathrm{r}(\vec{x},\vec{y};s,t,v,\vec{N})\overline{R_\mathrm{r}(\vec{x},\vec{y}{\scriptstyle '};s,t,-\overline{v}-2,\vec{N})}W(\vec{x},s;q^{-1})=\frac{\delta_{\vec{y},\vec{y}\scalebox{.6}{$'$}}}{W(\vec{y},t;q^{-1})}, \label{eq:RatRacMultiOrthx}\\
		&\sum_{y^{}_1=0}^{N_1}\cdots \sum_{y^{}_M=0}^{N_M} R_\mathrm{r}(\vec{x},\vec{y};s,t,v,\vec{N})\overline{R_\mathrm{r}(\vec{x}{\scriptstyle '},\vec{y};s,t,-\overline{v}-2,\vec{N})}W(\vec{y},t;q^{-1})=\frac{\delta_{\vec{x},\vec{x}\scalebox{.6}{$'$}}}{W(\vec{x},s;q^{-1})}.\label{eq:RatRacMultiOrthy}
	\end{align}
\end{proposition}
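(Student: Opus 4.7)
The plan is to imitate the proof of the univariate Proposition~\ref{prop:biorthraqra}, using the multivariate orthogonality relations \eqref{eq:KrawOrthnMulti} and \eqref{eq:KrawOrthxMulti} in place of the univariate ones. First I would introduce the $(N_1+1)\cdots(N_M+1)$-dimensional Hilbert space $\mathcal{H}^s_{\vec{N}}$ consisting of functions on $\{0,\ldots,N_1\}\times\cdots\times\{0,\ldots,N_M\}$ with inner product
\[
\langle f,g\rangle_{\mathcal{H}^s_{\vec{N}}} = \sum_{\vec{x}} f(\vec{x})\overline{g(\vec{x})}\,W(\vec{x},s;q^{-1}),
\]
and the linear operator $U_s\colon H_{\vec{N}}\to\mathcal{H}^s_{\vec{N}}$ defined by $(U_s f)(\vec{x}) = \langle f, K_{0,s}(\cdot,\vec{x})\rangle_{H_{\vec{N}}}$. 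Exactly as in the univariate case, the dual pair of orthogonality relations \eqref{eq:KrawOrthnMulti} and \eqref{eq:KrawOrthxMulti} together say that $U_s$ maps the orthogonal basis of rescaled delta functions $\{\delta_{\vec{n}}/w(\vec{n})\}$ of $H_{\vec{N}}$ isometrically onto the orthogonal basis $\{K_{0,s}(\vec{n},\cdot)\}$ of $\mathcal{H}^s_{\vec{N}}$, so $U_s$ is unitary.

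Next, applying \eqref{eq:kusk0s} componentwise gives $K_{1,s}(\vec{n},\vec{x}) = q^{-(n_1+\cdots+n_M)}K_{0,s}(\vec{n},\vec{x})$, and since the factor $q^{-\sum_j n_j}$ is real and acts as a self-adjoint multiplication operator on $H_{\vec{N}}$, it can be moved to the other slot of the inner product, yielding
\[
R_\mathrm{r}(\vec{x},\vec{y}) = \langle K_{0,s}(\cdot,\vec{x}), K_{v+1,t}(\cdot,\vec{y})\rangle_{H_{\vec{N}}} = \bigl(U_s K_{v+1,t}(\cdot,\vec{y})\bigr)(\vec{x}).
\]
Unitarity of $U_s$ then gives
\[
\bigl\langle R_\mathrm{r}(\cdot,\vec{y};s,t,v,\vec{N}),\, R_\mathrm{r}(\cdot,\vec{y}';s,t,-\overline{v}-2,\vec{N})\bigr\rangle_{\mathcal{H}^s_{\vec{N}}} = \bigl\langle K_{v+1,t}(\cdot,\vec{y}),\, K_{-\overline{v}-1,t}(\cdot,\vec{y}')\bigr\rangle_{H_{\vec{N}}},
\]
and a second application of \eqref{eq:kusk0s} cancels all the $v$-dependent factors on the right-hand side, reducing it to $\langle K_{0,t}(\cdot,\vec{y}), K_{0,t}(\cdot,\vec{y}')\rangle_{H_{\vec{N}}}$, which by \eqref{eq:KrawOrthnMulti} equals $\delta_{\vec{y},\vec{y}'}/W(\vec{y},t;q^{-1})$. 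This establishes \eqref{eq:RatRacMultiOrthx}.

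For \eqref{eq:RatRacMultiOrthy} I would use the symmetry $R_\mathrm{r}(\vec{x},\vec{y};s,t,v,\vec{N}) = R_\mathrm{r}(\vec{y},\vec{x};t,s,v,\vec{N})$, which follows factor-by-factor from the univariate symmetry invoked at the end of the proof of Proposition~\ref{prop:biorthraqra} (applied with $h_{j-1}(\vec{x},s)$ and $h_{j-1}(\vec{y},t)$ swapping roles in the $j$-th factor of the nested product); \eqref{eq:RatRacMultiOrthy} then becomes \eqref{eq:RatRacMultiOrthx} with the roles of $(\vec{x},s)$ and $(\vec{y},t)$ interchanged. The only genuinely new step beyond the univariate argument is verifying the unitarity of $U_s$, which is an immediate consequence of the two multivariate orthogonality relations stated just before the proposition; no additional calculation is required.
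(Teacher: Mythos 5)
Your proposal is correct and follows exactly the route the paper intends: the paper's own proof is the one-line remark that one repeats the argument of Proposition~\ref{prop:biorthraqra} with the multivariate orthogonality relations \eqref{eq:KrawOrthnMulti} and \eqref{eq:KrawOrthxMulti} in place of the univariate ones, which is precisely what you carry out (unitary $U_s$, the identity $R_\mathrm{r}(\vec{x},\vec{y})=(U_sK_{v+1,t}(\cdot,\vec{y}))(\vec{x})$, cancellation of the $v$-dependent factors, and the $(\vec{x},s)\leftrightarrow(\vec{y},t)$ symmetry for the second relation). No gaps.
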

\begin{proof}
	The proof is similar to the proof of Proposition \ref{prop:biorthraqra} - the univariate case - using the orthogonality of the multivariate $q^{-1}$-Krawtchouk polynomials \eqref{eq:KrawOrthnMulti} and \eqref{eq:KrawOrthxMulti} instead of the univariate ones \eqref{eq:KrawOrthn} and \eqref{eq:KrawOrthx}. 
\end{proof}
\begin{remark}	
	The orthogonality relation of the multivariate rational function $R_\mathrm{r}(\vec{x},\vec{y})$ doesn't trivially follow from the (bi)orthogonality of the univariate rational function $R_\mathrm{r}(x,y)$. This is in contrast with the case of the multivariate $q$-Racah polynomials of Tratnik-type, whose orthogonality relations follow directly by repeated application of the orthogonality relations of the univariate $q$-Racah polynomials. 
\end{remark}
We end this section by deducing $q$-difference equations for the multivariate rational function $R_\mathrm{r}(\vec{x},\vec{y})$. If we would approach this similar as before, we run into a problem. For example, taking $j=1$ and using the GEVP \eqref{eq:GEVPUqmulti} and that both $\Delta^{j-1}_\mathrm{L}(K^2)$ and $\Delta^{j-1}_\mathrm{L}\big(X_{0,s}\big)$ are self-adjoint,
\begin{align}
	\begin{split}[h_{M}(\vec{x},s)]_q &\Big\langle K_{1,s}(\cdot,\vec{x}),\pi_{\vec{N}}\Big( K^2\otimes 1\otimes \cdots \otimes 1\Big)K_{v,t}(\cdot,\vec{y}) \Big\rangle_{H_{\vec{N}}} 	\\
		=  &\Big\langle [h_{M}(\vec{x},s)]_q\pi_{\vec{N}}\Big(K^2\otimes 1\otimes \cdots \otimes 1\Big)K_{1,s}(\cdot,\vec{x}),K_{v,t}(\cdot,\vec{y}) \Big\rangle_{H_{\vec{N}}}\\
		= &\Big\langle \pi_{\vec{N}}\Big( X_{0,s}\otimes 1\otimes \cdots \otimes 1\Big)K_{1,s}(\cdot,\vec{x}),K_{v,t}(\cdot,\vec{y}) \Big\rangle_{H_{\vec{N}}}\\
		= &\Big\langle K_{1,s}(\cdot,\vec{x}),\pi_{\vec{N}}\Big( X_{0,s}\otimes 1\otimes \cdots \otimes 1\Big)K_{v,t}(\cdot,\vec{y}) \Big\rangle_{H_{\vec{N}}}.\end{split}\label{eq:CorGEVPmultiWrong} 
\end{align}
However, if we let $K^2\otimes 1 \otimes \cdots \otimes 1$ act on $K_{v,t}$ as a $q$-difference equation in $y_1$, we lose the desired nested structure of $K_{v,t}$ since $y_1$ appears in $h_{j-1}(\vec{y},t)$, which is a parameter of the other $q^{-1}$-Krawtchouk polynomials appearing as factors in $K_{v,t}$.\\
\\
This can be solved by observing that $K_{1,s}$ is also an eigenfunction of $1\otimes \cdots \otimes 1 \otimes \widetilde{X}_{1,h_{M-1}(\vec{x},s)}$. Indeed, by \eqref{eq:widetildeXEV} with $s$ replaced by $h_{M-1}(\vec{x},s)$ we get
\[
\pi_{\vec{N}}\Big( 1 \otimes \cdots \otimes \widetilde{X}_{1,h_{M-1}(\vec{x},s)}\Big)K_{1,s}(\cdot,\vec{x}) = [h_M(\vec{x},s)]_q K_{1,s}(\cdot,\vec{x}).
\]
More generally, we can use Proposition \ref{prop:multivarkrawEV} to show that
\begin{align}
	\pi_{\vec{N}}\Big(\Delta^{j-1}_\mathrm{R}\big(\widetilde{X}_{1,h_{M-j}(\vec{x},s)} \big)\Big)	K_{1,s}(\cdot,\vec{x}) = [h_M(\vec{x},s)]_q K_{1,s}(\cdot,\vec{x}). \label{eq:deltaXtuev2}
\end{align}
Therefore, $K_{1,s}(\vec{n},\vec{x})$ also solves the GEVP
\begin{align}
	\pi_{\vec{N}}\Big(\Delta^{j-1}_\mathrm{R}\big(X_{0,h_{M-j}(\vec{x},s)} \big)\Big)	K_{1,s}(\cdot,\vec{x}) = \lambda_j \pi_{\vec{N}}\Big(\Delta^{j-1}_\mathrm{R}\big(K^2\big) \Big) K_{1,s}(\cdot,\vec{x}) \label{eq:GEVPUqmulti2}
\end{align}
for $j=1,\ldots,M$ with $\lambda_j=[h_M(\vec{x},s)]_q$. If we now use the GEVP \eqref{eq:GEVPUqmulti2} and that both $\Delta^{j-1}_\mathrm{R}(K^2)$ and $\Delta^{j-1}_\mathrm{R}\big(X_{0,h_{M-j}(\vec{x},s)}\big)$ are self-adjoint, we obtain
\begin{align}
	\begin{split}[h_{M}(\vec{x},s)]_q &\Big\langle K_{1,s}(\cdot,\vec{x}),\pi_{\vec{N}}\Big( \Delta^{j-1}_\mathrm{R}\big(K^2\big)\Big)K_{v,t}(\cdot,\vec{y}) \Big\rangle_{H_{\vec{N}}} 	\\
		=  &\Big\langle [h_{M}(\vec{x},s)]_q\pi_{\vec{N}}\Big( \Delta^{j-1}_\mathrm{R}\big(K^2\big)\Big)K_{1,s}(\cdot,\vec{x}),K_{v,t}(\cdot,\vec{y}) \Big\rangle_{H_{\vec{N}}}\\
		= &\Big\langle \pi_{\vec{N}}\Big( \Delta^{j-1}_\mathrm{R}\big(X_{0,h_{M-j}(\vec{x},s)}\big)\Big)K_{1,s}(\cdot,\vec{x}),K_{v,t}(\cdot,\vec{y}) \Big\rangle_{H_{\vec{N}}}\\
		= &\Big\langle K_{1,s}(\cdot,\vec{x}),\pi_{\vec{N}}\Big( \Delta^{j-1}_\mathrm{R}\big(X_{0,h_{M-j}(\vec{x},s)}\big)\Big)K_{v,t}(\cdot,\vec{y}) \Big\rangle_{H_{\vec{N}}}.\end{split}\label{eq:CorGEVPmulti} 
\end{align}
We can again transfer the action of $\pi_{\vec{N}}\big( \Delta^{j-1}_\mathrm{R}\big(K^2\big)\big)$ and $\pi_{\vec{N}}\big(\Delta^{j-1}_\mathrm{R}\big(X_{0,h_{M-j}(\vec{x},s)}\big)\big)$ from the $\vec{n}$-variable to the $\vec{y}$-variable. Since $\Delta^{j-1}_\mathrm{R}$ is a homomorphism, we know from Lemma \ref{lem:XusinK2Xvt} that we can rewrite $\Delta^{j-1}_\mathrm{R}(X_{0,h_{M-j}(\vec{x},s)})$ in terms of $\Delta^{j-1}_\mathrm{R}(K^2)$ and $\Delta^{j-1}_\mathrm{R}(\widetilde{X}_{v,t})$ for any $t$. Therefore, it would be enough to transfer the $\vec{n}$-dependent action of $\pi_{\vec{N}}\big(\Delta^{j-1}_\mathrm{R}(K^2)\Big)$ and $\pi_{\vec{N}}\big(\Delta^{j-1}_\mathrm{R}(\widetilde{X}_{v,h_{M-j}(\vec{y},t)})\big)$  to the $\vec{y}$-variable. The latter is just the eigenvalue equation \eqref{eq:deltaXtuev2}, the first is a bit more involved. \\

\noindent \textbf{Example.} Let us take $M=4$ and $j=3$. Since
\[
	\Delta^2_\mathrm{R}(K^2) = 1\otimes K^2\otimes K^2\otimes K^2,
\]
we have
\begin{align*}
	\Big[\pi_{\vec{N}}&\Big(\Delta^2_\mathrm{R} (K^2)\Big)K_{v,t}(\cdot,\cdot,\cdot,\cdot,y_1,y_2,y_3,y_4)\Big](n_1,n_2,n_3,n_4)\\
	&=k_{v,t}(n_1,y_1)q^{2n_2-N_2} k_{v,h_1(\vec{y},t)}(n_2,y_2)q^{2n_3-N_3}k_{v,h_2(\vec{y},t)}(n_3,y_3)q^{2n_4-N_4}k_{v,h_3(\vec{y},t)}(n_4,y_4).
\end{align*}
If we apply the regular $q$-difference equation \eqref{eq:3termK2} of the $q$-Krawtchouk polynomials to\\ $k_{v,h_1(\vec{y},t)}(n_2,y_2)$, we get
\begin{align*}
	q^{2n_2-N_2}k_{v,h_1(\vec{y},t)}(n_2,y_2)=& a_{-1}(y,h_1(\vec{y},t)) k_{v,h_1(\vec{y},t)}(n_2,y_2-1)+ a_0(y,h_1(\vec{y},t))k_{v,h_1(\vec{y},t)}(n_2,y_2) \\
	&+ a_1(y,h_1(\vec{y},t))k_{v,h_1(\vec{y},t)}(n_2,y_2+1).
\end{align*}
Therefore, one of the terms we obtain is
\[
k_{v,t}(n_1,y_1)a_{-1}(y,h_1(\vec{y},t)) k_{v,h_1(\vec{y},t)}(n_2,y_2-1)q^{2n_3-N_3}k_{v,h_2(\vec{y},t)}(n_3,y_3)q^{2n_4-N_4}k_{v,h_3(\vec{y},t)}(n_4,y_4).
\]
Applying the regular $q$-difference equation \eqref{eq:3termK2} again on $k_{v,h_2(\vec{y},t)}(n_3,y_3)$, would give a term
\begin{align*}
k_{v,t}(n_1,y_1)a_{-1}(y,h_1(\vec{y},t))k_{v,h_1(\vec{y},t)}(n_2,y_2-1)a_{-1}(y,h_2(\vec{y},t))k_{v,h_2(\vec{y},t)}(n_3,y_3-1)&\\
\times q^{2n_4-N_4}k_{v,h_3(\vec{y},t)}(n_4,y_4)&.
\end{align*}
However, since $k_{v,h_2(\vec{y},t)}(n_3,y_3)$ depends on $y_2$ via the height function, the term above is not of the form required for $K_{v,t}(n_1,n_2,n_3,n_4,y_1,y_2-1,y_3-1,y_4)$. We can solve this by using `dynamical' $q$-difference equations for the $q^{-1}$-Krawtchouk polynomials, which also shift the parameter $t$. The proof can be found in \cite[Lemma 6.1]{GroeneveltWagenaarDyn}, where one has to replace $q$ by $q^{-1}$.
\begin{lemma}\label{lem:qdifkrawtchouk}
	The $q^{-1}$-Krawtchouk polynomials satisfy the following $q$-difference equations,
	\begin{align}
		\begin{split}q^{2n-N}k_{v,t}(n,y) =&  a_{-2,2} k_{v,t+2}(n,y-2)+a_{-1,2} k_{v,t+2}(n,y-1) +a_{0,2} k_{v,t+2}(n,y) ,\end{split}\label{eq:dualqkrawtchoukxrho+}\\
		\begin{split} q^{2n-N}k_{v,t}(n,y) =&  a_{0,-2}k_{v,t-2}(n,y)+a_{1,-2} k_{v,t-2}(n,y+1)+a_{2,-2} k_{v,t-2}(n,y+2),\end{split}\label{eq:dualqkrawtchoukxrho-}
	\end{align}
	where
	\begin{align*}
		a_{-2,2}(y,t)&=\frac{(1-q^{2y})(1-q^{2y-2})}{(1+q^{4y+2t-2N})(1+q^{4y+2t-2N-2})},\\
		a_{-1,2}(y,t)&=\frac{(1+q^{2})(1-q^{2y})(1+q^{2N-2y-2t})}{(1+q^{4y+2t-2N+2})(1+q^{2N-4y-2t+2})}, \\
		a_{0,2}(y,t)&=\frac{(1+q^{2N-2y-2t})(1+q^{2N-2y-2t-2})}{(1+q^{2N-4y-2t})(1+q^{2N-4y-2t-2})},
	\end{align*}
	and
	\begin{align*}
		a_{0,-2}(y,t)&=\frac{(1+q^{2y+2t})(1+q^{2y+2t-2})}{(1+q^{4y+2t-2N})(1+q^{4y+2t-2N-2})},\\
		a_{1,-2}(y,t)&= \frac{(1+q^{2})(1-q^{2N-2y})(1+q^{2y+2t})}{(1+q^{2N-4y-2t+2})(1+q^{4y+2t-2N+2})},\\
		a_{2,-2}(y,t)&= \frac{(1-q^{2N-2y})(1-q^{2N-2y-2})}{(1+q^{2N-4y-2t})(1+q^{2N-4y-2t-2})}.
	\end{align*}
\end{lemma}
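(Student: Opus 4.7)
The strategy is to exploit the algebraic interpretation of $k_{v,t}(\cdot,y)$ as an eigenfunction of $\pi_N(\widetilde{X}_{v,t})$ together with the fact that $\{k_{v,t+2}(\cdot,y')\}_{y'=0}^N$ is an eigenbasis of $\pi_N(\widetilde{X}_{v,t+2})$. Any element of $H_N$, in particular $\pi_N(K^2)k_{v,t}(\cdot,y)$, therefore admits an expansion in this second basis; the task is to show that only three coefficients are nonzero, namely those indexed by $y'=y-2,y-1,y$ for \eqref{eq:dualqkrawtchoukxrho+}, and to compute them explicitly.

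By \eqref{eq:kusk0s} we have $k_{v,t}(n,y)=q^{-vn}k_{0,t}(n,y)$, so multiplication by $q^{2n-N}$ and the basis change $\{k_{v,t}(\cdot,y)\}\to\{k_{v,t+2}(\cdot,y')\}$ both respect this factor. Hence it suffices to prove the $v=0$ case, and the stated coefficients $a_{j,2},a_{j,-2}$ are accordingly independent of $v$. To establish the three-term structure algebraically, I would compute the commutators $K^2\widetilde{X}_{0,t}-q^{\pm 2}\widetilde{X}_{0,t+2}K^2$ directly from \eqref{eq:UqRelations}. A short calculation using $KE=qEK$ and $KF=q^{-1}FK$ shows that each of these commutators collapses to a single $EK$ or $FK$ term plus a scalar in $[t]_q,[t+2]_q$. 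Applying such a relation to $k_{0,t}(\cdot,y)$, using the eigenvalue equation \eqref{eq:widetildeXEV} and the standard three-term recurrence \eqref{eq:3termK2} of $\pi_N(K^2)$ in the basis $\{k_{0,t}(\cdot,y)\}$, one finds that $\widetilde{X}_{0,t+2}\pi_N(K^2)k_{0,t}(\cdot,y)$ expands in at most three consecutive $k_{0,t+2}(\cdot,y')$'s. Since the eigenvalues $[2y'-N+t+2]_q$ of $\widetilde{X}_{0,t+2}$ are pairwise distinct, this forces the expansion of $\pi_N(K^2)k_{0,t}(\cdot,y)$ itself to have the claimed three-term form.

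The explicit coefficients $a_{-2,2}(y,t),a_{-1,2}(y,t),a_{0,2}(y,t)$ are then recovered by substituting three convenient values of $n$ into the polynomial identity, e.g.\ $n=0$ (where $k_{0,t}(0,y)=1$ so the identity becomes a closed-form identity in $q^y,q^t$), and solving the resulting $3\times 3$ linear system; alternatively, one matches the leading and trailing asymptotics in $n$. Equation \eqref{eq:dualqkrawtchoukxrho-} then follows from \eqref{eq:dualqkrawtchoukxrho+} by a duality symmetry of the $q^{-1}$-Krawtchouk polynomials (essentially the involution $y\leftrightarrow N-y$ combined with $t\leftrightarrow -t$), which interchanges the roles of the up-shift and down-shift in $t$.

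The main obstacle is the algebraic bookkeeping needed to extract the explicit rational expressions for $a_{j,\pm 2}(y,t)$: the commutator calculation is routine but has many terms, and a naive expansion rapidly becomes unwieldy. This is precisely the calculation carried out in \cite[Lemma 6.1]{GroeneveltWagenaarDyn}, which our proof would mirror after the substitution $q\mapsto q^{-1}$ noted in the statement; as a sanity check, specializing the proposed coefficients at $y=0$ or $y=N$ should reproduce the boundary behaviour forced by the finiteness of the support $\{0,\ldots,N\}$.
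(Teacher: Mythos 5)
The paper offers no proof of this lemma at all --- it just points to \cite[Lemma 6.1]{GroeneveltWagenaarDyn} with $q$ replaced by $q^{-1}$ --- so your sketch has to stand on its own. Your reduction to $v=0$ via \eqref{eq:kusk0s} is fine (the factor $q^{-vn}$ is common to every term), and your commutator computation is correct: $K^2\widetilde{X}_{0,t}-q^{2}\widetilde{X}_{0,t+2}K^2$ does collapse to a multiple of $FK$ plus a scalar, and the $q^{-2}$ variant to a multiple of $EK$ plus a scalar. The gap is in the inference you draw from this. Applying the first relation to $k_{0,t}(\cdot,y)$ gives
\[
q^{2}\,\pi_N(\widetilde{X}_{0,t+2})\pi_N(K^2)k_{0,t}(\cdot,y)=[2y-N+t]_q\,\pi_N(K^2)k_{0,t}(\cdot,y)-c_1\,\pi_N(FK)k_{0,t}(\cdot,y)-c_2\,k_{0,t}(\cdot,y),
\]
and the right-hand side contains the unknown vector $\pi_N(K^2)k_{0,t}(\cdot,y)$ itself together with $\pi_N(FK)k_{0,t}(\cdot,y)$ and $k_{0,t}(\cdot,y)$, none of which is known a priori to be supported on three consecutive indices in the basis $\{k_{0,t+2}(\cdot,y')\}$. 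Combining the two commutator relations merely re-expresses $K^2$ in the span of $EK$, $FK$ and $1$ (essentially Lemma \ref{lem:XusinK2Xvt}) and gives no bound on the support of the expansion. So the central claim --- bandedness of the transition matrix with exactly the stated support --- is asserted rather than proved. A workable repair is to compute the expansion coefficients directly as $W(y',t+2)\langle q^{2\,\cdot\,-N}k_{0,t}(\cdot,y),k_{0,t+2}(\cdot,y')\rangle_{H_N}$: these are Poisson-kernel sums of exactly the type evaluated in Lemma \ref{lem:summation3phi2to4phi3} (the extra $q^{2n}$ only shifts the parameter $d$), and the $q$-Saalsch\"utz step there produces the $q$-shifted factorial that vanishes unless $y-2\le y'\le y$, yielding the support and the closed forms simultaneously.

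Two further problems. First, the involution $y\leftrightarrow N-y$, $t\leftrightarrow -t$ is not a symmetry of $k_{v,t}(n,y)$: it sends the numerator parameters $q^{2y}$ and $-q^{2N-2y-2t}$ of the ${}_3\varphi_2$ to $q^{2N-2y}$ and $-q^{2y+2t}$, which is not the original parameter set, so \eqref{eq:dualqkrawtchoukxrho-} does not follow from \eqref{eq:dualqkrawtchoukxrho+} this way. The two identities are instead transposes of one another: \eqref{eq:dualqkrawtchoukxrho+} with $t$ replaced by $t-2$, combined with the orthogonality relations \eqref{eq:KrawOrthn}, determines the coefficients in \eqref{eq:dualqkrawtchoukxrho-}. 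Second, your own proposed sanity check exposes a normalization inconsistency you would need to resolve before solving any $3\times3$ system: at $y=0$ one has $a_{-2,2}=a_{-1,2}=0$, $a_{0,2}=1$ and $k_{v,t+2}(n,0)=q^{2n}k_{v,t}(n,0)$, so the claimed identity reduces to $q^{2n-N}=q^{2n}$; the stated coefficients are consistent with a left-hand side $q^{2n}k_{v,t}(n,y)$ rather than $q^{2n-N}k_{v,t}(n,y)$ (the same $q^{-N}$ slip appears in $a_0=-(a_{-1}+a_1-1)$ in Lemma \ref{lem:threetermKra}). Evaluating at $n=0$ as you suggest would therefore return coefficients differing from the printed ones by $q^{-N}$.
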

\ \\
\noindent \textbf{Example continued.} Therefore, instead of applying the regular $q$-difference equation for\\ $k_{v,h_1(\vec{y},t)}(n_2,y_2)$, we can use \eqref{eq:dualqkrawtchoukxrho-} to obtain a term
\begin{align*}
k_{v,t}(n_1,y_1)a_{-1}(y,h_1(\vec{y},t))k_{v,h_1(\vec{y},t)}(n_2,y_2-1)a_{2,-2}(y,h_2(\vec{y},t))k_{v,h_2(\vec{y}-1,t)}(n_3,y_3+2)&\\
\times q^{2n_4-N_4}k_{v,h_3(\vec{y},t)}(n_4,y_4)&.
\end{align*}
Applying the other `dynamical' equation \eqref{eq:dualqkrawtchoukxrho+} to
\[
q^{2n_4-N_4}k_{v,h_3(\vec{y},t)}(n_4,y_4),
\]
then gives the correct terms. For example, one of these terms would be
\begin{align*}
	k_{v,t}(n_1,y_1)a_{-1}(y,h_1(\vec{y},t))k_{v,h_1(\vec{y},t)}(n_2,y_2-1) a_{2,-2}(y,h_2(\vec{y},t))k_{v,h_2(\vec{y}-1,t)}(n_3,y_3+2)&\\
	\times a_{-1,2}(y,h_3(\vec{y},t))k_{v,h_3(\vec{y}+1,t)}(n_4,y_4-1)&,
\end{align*}
which is equal to
\begin{align*}
	a_{-1}(y,h_1(\vec{y},t))a_{2,-2}(y,h_2(\vec{y},t)a_{-1,2}(y,h_3(\vec{y},t))K_{v,t}(n_1,n_2,n_3,n_4,y_1,y_2-1,y_3+2,y_4-1)&.
\end{align*}
\\
Let us extend this idea to general $j$ and $M$. We can only shift the parameter `$t$' by $\pm 2$. Also, when the parameter `$t$' is shifted to `$t+2$', only terms with `$y$', `$y-1$' and `$y-2$' appear. Similarly, when `$t$' is shifted to `$t-2$', only terms with `$y$', `$y+1$' and `$y+2$' appear . Therefore, the height function $h_j(\vec{y},t)$ may (and can) only stay the same or change $\pm 2$ and the vector $\vec{y}$ can only change to $\vec{y}+\vec{\varepsilon}$, where $\vec{\varepsilon}\in \{0,\pm1,\pm2\}^M$ is such that
\begin{align}
	\begin{split}&\varepsilon_i = 0, \hspace{2.55cm} \text{for }i=1,\ldots,M-j,\\
		&\sum_{i=1}^{\ell} \varepsilon^{}_{i} \in \{-1,0,1\}, \qquad \text{for }\ell=1,\ldots, M. \end{split}\label{eq:conditionepsilon}
\end{align}
Denote the set of all $\vec{\varepsilon}$ that satisfy \eqref{eq:conditionepsilon} by $\mathcal{E}_j$. Note that $\# \mathcal{E}_j = 3^j$. In the example with $M=4$ and $j=3$, there are $27$ terms where the nine terms that have the form $(0,-1,\varepsilon_3,\varepsilon_4)$ are given by
\begin{alignat*}{3}
	&(0,-1,0,0),\qquad &&(0,-1,0,1) ,\qquad &&(0,-1,0,2),\\
	&(0,-1,1,-1),\qquad &&(0,-1,1,0) ,\qquad &&(0,-1,1,1),\\
	&(0,-1,2,-2),\qquad &&(0,-1,2,-1) ,\qquad &&(0,-1,2,0).
\end{alignat*} 
We now have the following multivariate analogue of Lemma \ref{lem:threetermKra}.
\begin{lemma} \label{lem:threetermKraMulti}
	De actions of $\pi_{\vec{N}}\big(\Delta^{j-1}_\mathrm{R}\big(K^2\big)\big)$ and $\pi_{\vec{N}}\big(\Delta^{j-1}_\mathrm{R}\big(X_{0,h_{M-j}(\vec{x},s)}\big)\big)$ on the $\vec{n}$-variable of $K_{v,t}(\vec{n},\vec{y})$ can be transferred to the $\vec{y}$-variable,
	\begin{align}
		\big[\pi_{\vec{N}} \Big(\Delta^{j-1}_\mathrm{R}\big(K^2\big)\Big)K_{v,t}(\cdot,\vec{y})\big](\vec{n}) &= \sum_{\vec{\varepsilon}\in \mathcal{E}_j}A^{(j)}_{\vec{\varepsilon}}(\vec{y},t)K_{v,t}(\vec{n},\vec{y}+\vec{\varepsilon}),\label{eq:K2jntoy}\\
		\big[\pi_{\vec{N}}\Big( \Delta^{j-1}_\mathrm{R}\big(X_{0,h_{M-j}(\vec{x},s)}\big)\Big)K_{v,t}(\cdot,\vec{y})\big](\vec{n}) &= [h_{M-j}(\vec{x},s)]_q K_{v,t}(\vec{n},\vec{y})+ \sum_{\vec{\varepsilon}\in \mathcal{E}_j}B^{(j)}_{\vec{\varepsilon}}(\vec{y},t)K_{v,t}(\vec{n},\vec{y}+\vec{\varepsilon}),\label{eq:Xs0jntoy}
	\end{align}
	where
	\begin{align*}
		A^{(j)}_{\vec{\varepsilon}}(\vec{y},t) &= \prod_{i=M-j+1}^{M} a^{}_{\varepsilon_i,h_{i-1}(\vec{\varepsilon},0)}(y_i,h_{i-1}(\vec{y},t)),\\
		a^{}_{\varepsilon,0}(y,t) &= a^{}_{\varepsilon}(y,t),
	\end{align*} 
	and
	\begin{align*}
		B^{(j)}_{\vec{\varepsilon}}(\vec{y},t) = \begin{cases}
			A^{(j)}_{\vec{\varepsilon}}(\vec{y},t)\big[h_M(\vec{y},t)+v-1\big]_q \qquad &\text{if $\ \sum_{i=1}^M\vec{\varepsilon}_i=-1$,} \\[6pt]
			A^{(j)}_{\vec{\varepsilon}}(\vec{y},t)\big[h_M(\vec{y},t)\big]_q\{v\}_q-[h_{M-j}(\vec{y},t)]_q\{v\}_q \qquad &\text{if $\ \vec{\varepsilon}=0$,} \\[6pt]
			A^{(j)}_{\vec{\varepsilon}}(\vec{y},t)\big[h_M(\vec{y},t)\big]_q\{v\}_q \qquad &\text{if $\ \sum_{i=1}^M\vec{\varepsilon}_i=0$, $\vec{\varepsilon}\neq 0$,}\\[6pt]
			A^{(j)}_{\vec{\varepsilon}}(\vec{y},t)\big[h_M(\vec{y},t)-v+1\big]_q \qquad &\text{if $\ \sum_{i=1}^M\vec{\varepsilon}_i=1$.}
		\end{cases}
	\end{align*}
\end{lemma}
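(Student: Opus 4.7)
The plan is to establish the two identities in sequence. First I would prove the $K^2$-action \eqref{eq:K2jntoy} by a direct iterative computation using the regular and dynamical $q$-difference equations of Lemma \ref{lem:qdifkrawtchouk} and equation \eqref{eq:3termK2}, and then bootstrap to the $X_{0,h_{M-j}(\vec{x},s)}$-action \eqref{eq:Xs0jntoy} by applying Lemma \ref{lem:XusinK2Xvt} through the homomorphism $\Delta^{j-1}_\mathrm{R}$.

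For \eqref{eq:K2jntoy}, note that $\pi_{\vec{N}}(\Delta^{j-1}_\mathrm{R}(K^2))$ acts as multiplication by $\prod_{i=M-j+1}^{M}q^{2n_i-N_i}$. I would process the factors of $K_{v,t}$ one slot at a time starting from $i=M-j+1$. The point is that after shifting $\vec{y}\mapsto\vec{y}+\vec{\varepsilon}$, the polynomial in slot $i$ must have parameter $h_{i-1}(\vec{y}+\vec{\varepsilon},t)=h_{i-1}(\vec{y},t)+2\sum_{k=1}^{i-1}\varepsilon_k$; so if the running partial sum $\sigma_{i-1}:=\sum_{k=1}^{i-1}\varepsilon_k$ equals $0$ we must use the regular equation \eqref{eq:3termK2} (no $t$-shift), while for $\sigma_{i-1}=\pm 1$ we must use the dynamical equation \eqref{eq:dualqkrawtchoukxrho+} or \eqref{eq:dualqkrawtchoukxrho-} respectively (shift $\pm 2$). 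Each of the three equations supplies three branches for $\varepsilon_i$ keeping $\sigma_i\in\{-1,0,1\}$, which shows both that the set of allowed $\vec{\varepsilon}$ is exactly $\mathcal{E}_j$ (with $|\mathcal{E}_j|=3^j$) and that the coefficient assembles into the product $A^{(j)}_{\vec{\varepsilon}}(\vec{y},t)$ with the correct height arguments.

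For \eqref{eq:Xs0jntoy}, set $s':=h_{M-j}(\vec{x},s)$ and $t_*:=h_{M-j}(\vec{y},t)$. Applying Lemma \ref{lem:XusinK2Xvt} with $u=0$, $s=s'$, $t=t_*$, and pushing through the algebra homomorphism $\Delta^{j-1}_\mathrm{R}$ yields
\begin{align*}
	\Delta^{j-1}_\mathrm{R}(X_{0,s'})=\frac{q^{v}[\Delta^{j-1}_\mathrm{R}(K^2),\Delta^{j-1}_\mathrm{R}(\widetilde{X}_{v,t_*})]_q+q^{-v}[\Delta^{j-1}_\mathrm{R}(\widetilde{X}_{v,t_*}),\Delta^{j-1}_\mathrm{R}(K^2)]_q}{q^2-q^{-2}}-[t_*]_q\{v\}_q+[s']_q.
\end{align*}
The $\Delta_\mathrm{R}$-analogue of Proposition \ref{prop:multivarkrawEV} (proved by the same induction used for \eqref{eq:deltaXtuev2}, but for the general parameters $v,t$) gives
\[
	\pi_{\vec{N}}\big(\Delta^{j-1}_\mathrm{R}(\widetilde{X}_{v,t_*})\big)K_{v,t}(\cdot,\vec{y}+\vec{\varepsilon})=[h_M(\vec{y}+\vec{\varepsilon},t)]_q K_{v,t}(\cdot,\vec{y}+\vec{\varepsilon}),
\]
using that $\varepsilon_i=0$ for $i\leq M-j$ so $h_{M-j}(\vec{y}+\vec{\varepsilon},t)=t_*$. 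Inserting this together with \eqref{eq:K2jntoy} into the two $q$-commutator terms produces a sum over $\vec{\varepsilon}\in\mathcal{E}_j$ whose $\vec{\varepsilon}$-coefficient (before adding the constants) is $A^{(j)}_{\vec{\varepsilon}}(\vec{y},t)\cdot C_\sigma/(q^2-q^{-2})$ with $\sigma:=\sum_i\varepsilon_i\in\{-1,0,1\}$ and
\[
	C_\sigma=(q^{v+1}-q^{-v-1})[h_M(\vec{y},t)]_q+(q^{-v+1}-q^{v-1})[h_M(\vec{y},t)+2\sigma]_q.
\]
A short case check using the definition of $[\,\cdot\,]_q$ reduces $C_\sigma/(q^2-q^{-2})$ to $[h_M+v-1]_q$, $[h_M]_q\{v\}_q$, or $[h_M-v+1]_q$ in the cases $\sigma=-1,0,1$; adding the constant pieces $-[t_*]_q\{v\}_q$ (absorbed into $B^{(j)}_{\vec 0}$) and $[s']_q K_{v,t}(\vec{n},\vec{y})$ (separated off as the first term on the right of \eqref{eq:Xs0jntoy}) yields precisely the stated $B^{(j)}_{\vec{\varepsilon}}$.

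The main obstacle is the bookkeeping in the first step: verifying that forcing compatibility of heights at each slot gives exactly the three admissible branches and that the product of the $a$-coefficients coming out of the iteration matches $A^{(j)}_{\vec{\varepsilon}}$ with the stated arguments $h_{i-1}(\vec{y},t)$. The $q$-commutator computation in the second step is routine modulo the eigenvalue identity for $\Delta^{j-1}_\mathrm{R}(\widetilde{X}_{v,t_*})$, which is why I would first verify that analogue of Proposition \ref{prop:multivarkrawEV} explicitly.
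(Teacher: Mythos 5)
Your proposal is correct and follows essentially the same route as the paper: identity \eqref{eq:K2jntoy} is obtained by working through the slots $i=M-j+1,\dots,M$ (the paper phrases this as an induction on $j$, adding the last slot each time) and selecting the regular equation \eqref{eq:3termK2} or the dynamical equations \eqref{eq:dualqkrawtchoukxrho+}/\eqref{eq:dualqkrawtchoukxrho-} according to the accumulated shift, and \eqref{eq:Xs0jntoy} then follows by pushing \eqref{eq:Xs1K2Xtu} through $\Delta^{j-1}_\mathrm{R}$ and combining \eqref{eq:K2jntoy} with the eigenvalue equation \eqref{eq:deltaXtuev}. Your explicit $q$-commutator case check reproducing $[h_M+v-1]_q$, $[h_M]_q\{v\}_q$ and $[h_M-v+1]_q$ is exactly the computation the paper delegates to ``similar to the proof of Lemma \ref{lem:threetermKra}''.
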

\begin{proof}
	We prove this using induction on $j$. Note that 
	\begin{align*}
		\big[\pi_{\vec{N}}\Big(\Delta^{j-1}_\mathrm{R}\big(K^2\big)\Big)K_{v,t}(\cdot,\vec{y})\big](\vec{n}) = \prod_{i=1}^{M-j} k_{v,h_{i-1}(\vec{y},t)}(n_i,y_i) \prod_{i=M-j+1}^M q^{2n_i-N_i}k_{v,h_{i-1}(\vec{y},t)}(n_i,y_i).
	\end{align*}
	Therefore, the case $j=1$ is just \eqref{eq:3termK2} from Lemma \ref{lem:threetermKra} with `$t$' replaced by `$h_{M-1}(\vec{y},t)$'. If it is true for some $j$, then
	\begin{align*}
		\big[\pi_{\vec{N}}\Big(\Delta^{j}_\mathrm{R}\big(K^2\big)\Big)K_{v,t}(\cdot,\vec{y})\big](\vec{n}) &= \prod_{i=1}^{M-j-1} k_{v,h_{i-1}(\vec{y},t)}(n_i,y_i) \prod_{i=M-j}^M q^{2n_i-N_i}k_{v,h_{i-1}(\vec{y},t)}(n_i,y_i)\\
		&=\sum_{\substack{\vec{\varepsilon}\in \mathcal{E}_{j+1}\\ \vec{\varepsilon}^{}_M=0}}q^{2n^{}_{M}-N^{}_{M}}k_{v,h_{M-1}(\vec{y},t)}(n_M,y_M) \\
		& \qquad \qquad \times C_{\vec{\varepsilon}}\prod_{i=1}^{M-1} k_{v,h_{i-1}(\vec{y}+\vec{\varepsilon},t)}(n_{i},y_{i}+\varepsilon_i),
	\end{align*}
	where 
	\[
	C_{\vec{\varepsilon}} = \prod_{i=M-j}^{M-1} a^{}_{\varepsilon_i,h_{i-1}(\vec{\varepsilon},0)}(y_i,h_{i-1}(\vec{y},t)).
	\]
	Depending on the value of 
	\[
	S=\sum_{i=1}^{M-1} \varepsilon_i,
	\]
	we use a different $q$-difference equation for 
	\[
	q^{2n_{M}-N_{M}}k_{v,h_{M-1}(\vec{y},t)}.
	\]
	If $S=1$ or $S=-1$, we use the `dynamical' $q$-difference equation \eqref{eq:dualqkrawtchoukxrho+} or \eqref{eq:dualqkrawtchoukxrho-} respectively. If $S=0$, we use the regular one \eqref{eq:3termK2}. Therefore, \eqref{eq:K2jntoy} follows.\\
	
	For \eqref{eq:Xs0jntoy}, the proof is similar to the proof of Lemma \ref{lem:threetermKra}. That is, use \eqref{eq:Xs1K2Xtu}, but now with $s$ and $t$ replaced by $h_{M-j}(\vec{x},s)$ and $h_{M-j}(\vec{y},t)$ respectively, and apply $\Delta^{j-1}_\mathrm{R}$ on both sides. Then use \eqref{eq:K2jntoy} for $\pi_{\vec{N}}\big(\Delta^{j-1}_\mathrm{R}(K^2)\big)$ and \eqref{eq:deltaXtuev} for $\pi_{\vec{N}}\big(\Delta^{j-1}_\mathrm{R}\big(\widetilde{X}_{v,h_{M-j}(\vec{y},t)}\big)\big)$, i.e.
	\[
	\Big[\pi_{\vec{N}}\Big(\Delta^{j-1}_\mathrm{R}\big(\widetilde{X}_{v,h_{M-j}(\vec{y},t)}\big)\Big)K_{v,t}(\cdot,\vec{y}+\vec{\varepsilon})\Big](\vec{n}) = h_M(\vec{y}+\vec{\varepsilon},t) K_{v,t}(\vec{n},\vec{y}+\vec{\varepsilon}). \qedhere
	\]
\end{proof}
Now we can use \eqref{eq:CorGEVPmulti} and Lemma \ref{lem:threetermKraMulti} to deduce GEVP's for the multivariate rational functions $R_\mathrm{r}(\vec{x},\vec{y})$, which is a multivariate analogue of Corollary \ref{cor:GEVPR(x,y)}
\begin{corollary} \label{cor:GEVPR(x,y)multi}
	For $j=1,\ldots,M$, the multivariate rational function $R_\mathrm{r}(\vec{x},\vec{y})$ satisfies the following GEVPs,
	\begin{align*}
		[h_M(\vec{x},s)]_q \sum_{\vec{\varepsilon}\in \mathcal{E}_j} A_{\vec{\varepsilon}}^{(j)}(\vec{y},t) R_\mathrm{r}(\vec{x},\vec{y}+\vec{\varepsilon})= [h_{M-j}(\vec{x},s)]_q R_\mathrm{r}(\vec{x},\vec{y}) +\sum_{\vec{\varepsilon}\in \mathcal{E}_j} B_{\vec{\varepsilon}}^{(j)}(\vec{y},t) R_\mathrm{r}(\vec{x},\vec{y}+\vec{\varepsilon}).
	\end{align*}
\end{corollary}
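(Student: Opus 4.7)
The plan is to combine the two ingredients that have been carefully assembled just before the statement: the identity \eqref{eq:CorGEVPmulti}, which encodes the GEVP \eqref{eq:GEVPUqmulti2} together with the self-adjointness of $\Delta^{j-1}_\mathrm{R}(K^2)$ and $\Delta^{j-1}_\mathrm{R}\!\bigl(X_{0,h_{M-j}(\vec{x},s)}\bigr)$ in $\U_q(\mathfrak{su}_2)$; and the transfer formulas \eqref{eq:K2jntoy}--\eqref{eq:Xs0jntoy} from Lemma \ref{lem:threetermKraMulti}, which convert the $\vec{n}$-action of these two operators on $K_{v,t}(\cdot,\vec{y})$ into a finite sum of $\vec{y}$-shift actions indexed by $\vec{\varepsilon}\in\mathcal{E}_j$. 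Since both sides of \eqref{eq:CorGEVPmulti} are inner products of $K_{1,s}(\cdot,\vec{x})$ with some operator applied to $K_{v,t}(\cdot,\vec{y})$, the proof is essentially a substitution followed by identifying each resulting inner product with the overlap $R_\mathrm{r}$.

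First I would take the left-hand side of \eqref{eq:CorGEVPmulti} and substitute \eqref{eq:K2jntoy}, moving the finite sum outside the inner product. By the definition of $R_\mathrm{r}(\vec{x},\vec{y})$, each term $\langle K_{1,s}(\cdot,\vec{x}), K_{v,t}(\cdot,\vec{y}+\vec{\varepsilon})\rangle_{H_{\vec{N}}}$ becomes $R_\mathrm{r}(\vec{x},\vec{y}+\vec{\varepsilon})$, producing the left-hand side of the claimed GEVP. Next I would do the same on the right-hand side of \eqref{eq:CorGEVPmulti} using \eqref{eq:Xs0jntoy}: the scalar piece $[h_{M-j}(\vec{x},s)]_q K_{v,t}(\vec{n},\vec{y})$ yields the term $[h_{M-j}(\vec{x},s)]_q R_\mathrm{r}(\vec{x},\vec{y})$, and the shift sum yields $\sum_{\vec{\varepsilon}\in\mathcal{E}_j} B^{(j)}_{\vec{\varepsilon}}(\vec{y},t) R_\mathrm{r}(\vec{x},\vec{y}+\vec{\varepsilon})$. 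Equating the two expressions produces the corollary.

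Because Lemma \ref{lem:threetermKraMulti} has already done the genuinely intricate work --- propagating shifts through the nested structure of $K_{v,t}$, choosing between the regular $q$-difference equation \eqref{eq:3termK2} and the two dynamical versions \eqref{eq:dualqkrawtchoukxrho+}--\eqref{eq:dualqkrawtchoukxrho-} according to the running partial sum $\sum_{i=1}^{\ell}\varepsilon_i$, and packaging the coefficients into $A^{(j)}_{\vec{\varepsilon}}$ and $B^{(j)}_{\vec{\varepsilon}}$ --- the remaining step is essentially a bookkeeping exercise. The only point that requires a brief sanity check is that the operators appearing on both sides of \eqref{eq:CorGEVPmulti} are indeed self-adjoint in $\pi_{\vec{N}}$, which follows because $\Delta$ is a $*$-algebra homomorphism and $K^2$, $X_{0,s}$ are self-adjoint in $\U_q(\mathfrak{su}_2)$ when $s\in\R$; hence the main (and essentially only) obstacle is making sure the parameter $h_{M-j}(\vec{x},s)$ stays real, which it does as $\vec{x}\in(\Z_{\geq 0})^M$ and $s\in\R$.
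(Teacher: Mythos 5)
Your proposal is correct and follows exactly the route the paper intends: the corollary is stated as an immediate consequence of \eqref{eq:CorGEVPmulti} together with Lemma \ref{lem:threetermKraMulti}, and your substitution of \eqref{eq:K2jntoy} and \eqref{eq:Xs0jntoy} into the two sides of \eqref{eq:CorGEVPmulti}, followed by identifying the inner products with $R_\mathrm{r}(\vec{x},\vec{y}+\vec{\varepsilon})$, is precisely that argument. The paper gives no further detail, so your write-up is if anything more explicit than the original.
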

\begin{remark}
	Note that the term
	\[
	[h_{M-j}(\vec{x},s)]_q R_\mathrm{r}(\vec{x},\vec{y})
	\]
	seems a bit odd. However, since $\vec{\varepsilon}\in\mathcal{E}_j$ only has non-zero entries in the last $j$ positions and $h_{M-j}(\vec{x},s)$ does not depend on the last $j$ variables, this factor can be interpreted as the term $[s]_qR_\mathrm{r}(x,y)$ in the GEVP given in Corollary \ref{cor:GEVPR(x,y)}.
\end{remark}	

\section{The algebra $\U_q(\mathfrak{su}_{1,1})$ and rational $\rphisempty{4}{3}$-functions}\label{sec:su11rat}
	This section will be similar to section \ref{sec:su2rat}, but instead of $\U_q(\mathfrak{su}_2)$ and the $q^{-1}$-Krawtchouk polynomials we use $\U_q(\mathfrak{su}_{1,1})$ and the $q^{-1}$-Al-Salam--Chihara polynomials. Consequently, we will work in an infinite dimensional Hilbert space instead of a finite dimensional one. Throughout this section, we take $0<q<1$ and $s,t>-1$.
	\subsection{The algebra $\U_q(\mathfrak{su}_{1,1})$ and twisted primitive elements}
		The quantum algebra $\U_q(\mathfrak{su}_{1,1})$ is the algebra $\U_q(\mathfrak{sl}_2)$ equipped with the $*$-structure which comes from the non-compact Lie algebra $\mathfrak{su}_{1,1}$. This is the anti-linear involution defined on the generators by
	\[
	K^*=K, \quad E^*=-F, \quad F^* = -E, \quad (K^{-1})^* = K^{-1}.
	\]
	Let us define the following (almost) twisted primitive elements in $\U_q(\mathfrak{su}_{1,1})$, 
	\begin{align*}
		Y_{u,s} =& q^{u+\frac12} EK - q^{-u-\frac12}FK + \{s\}_q,\\
		\widetilde{Y}_{u,s}=&q^{-u-\frac12} EK^{-1} - q^{u+\frac12}FK^{-1} + \{s\}_qK^{-2}.
	\end{align*}
	Note the subtle difference with $X_{u,s}$ and $\widetilde{X}_{u,s}$. This difference is to make sure that $Y_{u,s}$ and $\widetilde{Y}_{u,s}$ are self-adjoint in $\U_q(\mathfrak{su}_{1,1})$ if $u\in i\R$ and $s\in\R$. The coproduct of $\widetilde{Y}_{u,s}$ satisfies
	\begin{align}
		\begin{split}\Delta(\widetilde{Y}_{u,s})& = 1 \otimes (\widetilde{Y}_{u,s}-\{s\}_qK^{-2}) + \widetilde{Y}_{u,s}\otimes K^{-2}\\
			&=1 \otimes \widetilde{Y}_{u,0} + \widetilde{Y}_{u,s}\otimes K^{-2} \end{split}\label{eq:DeltatildeY}
	\end{align} 
	Non-surprisingly, we can also write $Y_{u,s}$ as a polynomial in $K^2$ and $\widetilde{Y}_{v,t}$, similar to Lemma \ref{lem:XusinK2Xvt}.
	\begin{corollary} \label{cor:YusinK2Yvt}
		We have
		\begin{align*}
			Y_{u,s}=\frac{q^{u+v}\big[K^2,\widetilde{Y}_{v,t}\big]_q + q^{-u-v} \big[\widetilde{Y}_{v,t},K^2\big]_q }{q^2-q^{-2}}- \{t\}_q\{u+v\}_q+\{s\}_q.
		\end{align*}
	\end{corollary}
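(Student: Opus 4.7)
My plan is to mirror the proof of Lemma \ref{lem:XusinK2Xvt} verbatim; the only changes are that $\widetilde{Y}_{v,t}$ replaces $\widetilde{X}_{v,t}$, so there is a sign flip in front of the $FK^{-1}$ term and $\{t\}_q$ appears instead of $[t]_q$. These changes propagate consistently through the calculation, so the algebraic mechanism is unchanged. Concretely, I would first compute the products $K^2\widetilde{Y}_{v,t}$ and $\widetilde{Y}_{v,t}K^2$ using the relations \eqref{eq:UqRelations}; since $K^2 E K^{-1} = q^2 EK$ and $K^2 F K^{-1} = q^{-2} FK$, this yields
\begin{align*}
    \widetilde{Y}_{v,t}K^2 &= q^{-v-\tfrac12}EK - q^{v+\tfrac12}FK + \{t\}_q,\\
    K^2\widetilde{Y}_{v,t} &= q^{-v+\tfrac32}EK - q^{v-\tfrac32}FK + \{t\}_q.
\end{align*}

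Next I would form the two $q$-commutators. The coefficients line up so that $[K^2,\widetilde{Y}_{v,t}]_q$ kills the $FK$ term and $[\widetilde{Y}_{v,t},K^2]_q$ kills the $EK$ term, giving
\begin{align*}
    \big[K^2,\widetilde{Y}_{v,t}\big]_q &= q^{-v+\tfrac12}(q^2-q^{-2})\,EK + (q-q^{-1})\{t\}_q,\\
    \big[\widetilde{Y}_{v,t},K^2\big]_q &= -q^{v-\tfrac12}(q^2-q^{-2})\,FK + (q-q^{-1})\{t\}_q.
\end{align*}
At this point $EK$ and $FK$ are each individually accessible in terms of $K^2$, $\widetilde{Y}_{v,t}$, and constants.

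Finally, I would take the combination $\big(q^{u+v}[K^2,\widetilde{Y}_{v,t}]_q+q^{-u-v}[\widetilde{Y}_{v,t},K^2]_q\big)/(q^2-q^{-2})$. The $EK$ coefficient becomes $q^{u+\tfrac12}$ and the $FK$ coefficient becomes $-q^{-u-\tfrac12}$, matching $Y_{u,s}$; the constant contribution adds up to $\tfrac{(q^{u+v}+q^{-u-v})\{t\}_q}{q+q^{-1}}=\{t\}_q\{u+v\}_q$, which is exactly what the correction term $-\{t\}_q\{u+v\}_q+\{s\}_q$ is designed to cancel (up to the final additive $\{s\}_q$). I do not anticipate any genuine obstacle here; the only point requiring care is tracking the sign coming from the $-FK$ in the definition of $Y_{u,s}$ against the $-FK^{-1}$ in $\widetilde{Y}_{v,t}$, and verifying that the symmetric factor $\{u+v\}_q$ (rather than $[u+v]_q$) comes out, which is forced precisely by the $+q^{v+\tfrac12}FK^{-1}$ sign convention in $\widetilde{Y}_{v,t}$.
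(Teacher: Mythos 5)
Your computation is correct: I checked the products $\widetilde{Y}_{v,t}K^2$ and $K^2\widetilde{Y}_{v,t}$, the two $q$-commutators, and the final linear combination, and everything lines up, including the constant term $\tfrac{(q^{u+v}+q^{-u-v})(q-q^{-1})}{q^2-q^{-2}}\{t\}_q=\{u+v\}_q\{t\}_q$. However, your route differs from the paper's. You re-run the proof of Lemma \ref{lem:XusinK2Xvt} from scratch with $\widetilde{Y}_{v,t}$ in place of $\widetilde{X}_{v,t}$, tracking the sign flip on the $FK$ terms and the replacement of $[\,\cdot\,]_q$ by $\{\cdot\}_q$ throughout. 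The paper instead deduces the corollary from the already-proven Lemma \ref{lem:XusinK2Xvt} by the formal substitution $(q^u,q^v,[s]_q,[t]_q)\mapsto(iq^u,-iq^v,i\{s\}_q,i\{t\}_q)$ followed by multiplication by $-i$, which turns $X_{u,s}$ into $iY_{u,s}$ and $\widetilde{X}_{v,t}$ into $i\widetilde{Y}_{v,t}$ while leaving $q^{\pm(u+v)}$ and $\{u+v\}_q$ unchanged. The paper's argument is a one-liner but implicitly relies on the lemma being an identity in the formal parameters $q^u,q^v,[s]_q,[t]_q$, so that complex substitutions are legitimate; your direct calculation is longer but entirely self-contained and makes the sign bookkeeping explicit, which is arguably the safer path for a reader who wants to see why $\{u+v\}_q$ rather than $[u+v]_q$ emerges.
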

	\begin{proof}
		This follows directly from Lemma \ref{lem:XusinK2Xvt} by replacing $$(q^u,q^v,[s]_q,[t]_q)$$ by $$(iq^u,-iq^v,i\{s\}_q,i\{t\}_q)$$ and multiplying both sides by $-i$.
	\end{proof}
	\subsection{A representation of $\U_q(\mathfrak{su}_{1,1})$ and Al-Salam--Chihara polynomials}
	Analogue to the $q$-Krawtchouk polynomials, we now need the Al-Salam--Chihara polynomials, 
	\[
	Q_n(x;a,b;q)= \rphis{3}{2}{q^{-n}, ax, a/x }{ab, 0 }{q,q}.
	\]
	Let $k>0$, we will look at renormalized Al-Salam--Chihara polynomials in base $q^{-2}$, $x$ replaced by $aq^{-2x}$, where $x\in \Z_{\geq 0}$ is discrete, $a=q^{-s-k}$ and $b=q^{s-k}$,
	\begin{align}
		\phi^{}_{u,s}(n,x)&=\phi_{u,s}(n,x;k,q)= q^{n(s-u+\frac12 k +\frac12)} Q_n(q^{-2x-s-k};q^{-s-k},q^{s-k};q^{-2}) \\
		&=q^{n(s-u+\frac12 k +\frac12)}  \rphis{3}{2}{q^{2n}, q^{2x}, q^{-2x-2s-2k}}{q^{-2k}, 0 }{q^{-2},q^{-2}}.
	\end{align}
	Note that 
	\begin{align}
		\phi_{u,s}(n,x)= q^{-un}\phi_{0,s}. \label{eq:phiusphi0s}
	\end{align}
	If $s>-1$, the $q^{-1}$-Al-Salam--Chihara polynomials have an orthogonality relation in $n$ as well as $x$, see \cite{AI}, 
	\begin{align}
		\sum_{n=0}^\infty \phi_{0,s}(n,x)\phi_{0,s}(n,x')w_k(n) = \frac{\delta_{x,x'}}{W_k(x,s)},\label{eq:ASCOrthn}\\
		\sum_{x=0}^\infty \phi_{0,s}(n,x)\phi_{0,s}(n',x)W_k(x,s;q) = \frac{\delta_{n,n'}}{w_k(n)},\label{eq:ASCOrthx}
	\end{align}
	where the weight functions are given by
	\begin{align*}
		w_k(n;q)&=q^{-n(k-1)}\frac{(q^{2k};q^2)_n}{(q^2;q^2)_n},\\
		W_k(x,s;q)&=\frac{1-q^{4x+2s+2k}}{1-q^{2x+2s+2k}}\frac{(q^{2k};q^2)_x}{(q^2;q^2)_x}\frac{(q^{2x+2s +2};q^2)_\infty}{(q^{2x+2s+2k +2};q^2)_\infty}q^{2x(x+s)}.
	\end{align*}
	Next, for this section, we define $H_{k}$ to be the infinite dimensional Hilbert space of functions $f\colon\Z_{\geq 0}\to\C$ with inner product induced by the measure $w_k$,
	\[
	\langle f,g \rangle_{H_k} = \sum_{n=0}^{\infty} f(n) \overline{g(n)} w_k(n).
	\]
	Let $L(H_{k})$ be the space of (possibly unbounded) linear operators on $H_{k}$ and $\pi_{k}\colon\U_q(\mathfrak{su}_{1,1})\to L(H_{k})$ the $*$-representation defined by
	\begin{equation} \label{eq:representationk}
		\begin{split}
			[\pi_k(K)f](n ) &= q^{n+\frac12 k} f(n), \\
			[\pi_k(E)f](n) &= [n]_q f(n-1), \\
			[\pi_k(F) f](n) & = -[n+k]_q f(n+1),\\
			[\pi_k(K^{-1}) f](n) &= q^{-n - \frac12 k}f(n),
		\end{split}
	\end{equation}
	where we take the space $F_0$ of functions with finite support as dense domain. One can easily verify that this is a $*$-representation, i.e. $$\langle\pi_k(X)f,g\rangle_{H_k}=\langle f,\pi_k(X^*)g\rangle_{H_k}$$ for all $X\in \U_q(\mathfrak{su}_{1,1})$ and $f,g\in F_0$.\\
	
	The $q^{-1}$-Al-Salam--Chihara polynomials $\phi_{u,s}(\cdot,x): \Z_{\geq 0}\to \C$ are eigenfunctions of $\pi_k(\widetilde{Y}_{u,s})$,
	\begin{align}
		[\pi_k(\widetilde{Y}_{u,s})\phi_{u,s}(\cdot,x)](n)=\{2x+k+s\}_q\phi_{u,s}(n,x), \label{eq:widetildeYEV}
	\end{align}
	which follows from matching the explicit action of $\pi_k(\widetilde{Y}_{u,s})$ with the three-term recurrence relation of the Al-Salam--Chihara polynomials \cite[(14.8.4)]{KLS}.
	
	\subsection{Rational $\rphisempty{4}{3}$-functions as overlap coefficients}
	Similar to the previous section, we will look at overlap coefficients of a GEVP and an EVP,
	\begin{align}
		\pi_k^{}(Y_{0,s}^{})f&=\lambda \pi_k(K^2)f, \label{eq:GEVPY}\\
		\pi_k^{}(\widetilde{Y}_{v,t}^{})g&=\mu g. \label{eq:EVPY}
	\end{align}
	Again, we rewrite the GEVP \eqref{eq:GEVPY} by multiplying both sides by $\pi_k(K^{-2})$,
	\begin{align}
		\pi_k^{}(\widetilde{Y}_{1,s}^{})f&=\lambda f. \label{eq:GEVPYrewritten}
	\end{align}
	Therefore, 
	\begin{align*}
		f_x(n)= \phi^{}_{1,s}(n,x) \qquad \text{and} \qquad g_y(n)=\phi_{v,t}(n,y)
	\end{align*}
	solve \eqref{eq:GEVPY} and \eqref{eq:EVPY} with
	\begin{align*}
		\lambda_x = \{2x+k+s\}_q \qquad \text{and}\qquad \mu_y = \{2y+k+t\}_q
	\end{align*}
	respectively. Let 
	\begin{equation} \label{eq:innerprod P}
		P_\mathrm{r}(x,y)=P_\mathrm{r}(x,y;s,t,v,k;q) = \langle \phi_{1,s}(\cdot,x),\phi_{v,t}(\cdot,y) \rangle_{H_k}
	\end{equation}
	be the overlap coefficients of these eigenfunctions. Then, using Lemma \ref{lem:summation3phi2to4phi3} with $q$ replaced by $q^2$ and taking $(a,b,c,d)$ to be $$(q^{2k},q^s,q^t,q^{1-v}),$$ we can show that the sum $P_\mathrm{r}$ converges if $|q^{s+t+1-v}|<1$ and that it is a rational function of $\rphisempty{4}{3}$-type.
	\begin{corollary}\label{cor:overlapratASC} If $\mathrm{Re}(v)<1+s+t$,
			\[
		P_\mathrm{r}(x,y)= 	c_1\rphis{4}{3}{q^{-2x},q^{2x+2s+2k},q^{s+t-v-1},q^{s-t-v+1}}{q^{2s+2},q^{-2y+s-t-v+1},q^{2y+s+t+2k-v+1}}{q^2;q^2},
		\]
		where 
		\[
		c_1 = \frac{(q^{s+t+2k-v+1} ;q^2)_\infty}{(q^{s+t-v+1};q^2)_\infty} \frac{(q^{-2y+s-t-v+1};q^2)_{y}}{(q^{s+t+2k-v+1};q^2)_{y}}\frac{(q^{-2x-2s};q^2)_{x}}{(q^{2k};q^2)_{x}}.
		\]
	\end{corollary}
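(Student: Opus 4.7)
The plan is to mirror the proof of Proposition \ref{prop:overlaprational} by a direct application of the summation formula Lemma \ref{lem:summation3phi2to4phi3}, with the $q^{-1}$-Krawtchouk polynomials replaced by $q^{-1}$-Al-Salam--Chihara polynomials and the finite binomial weight replaced by the infinite weight $w_k$. The GEVP/EVP side of the story has already been done for us: $f_x = \phi_{1,s}(\cdot,x)$ and $g_y = \phi_{v,t}(\cdot,y)$ solve \eqref{eq:GEVPY} and \eqref{eq:EVPY}, so the task reduces to recognising the sum \eqref{eq:innerprod P} as the right-hand side of Lemma \ref{lem:summation3phi2to4phi3}.

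First I would write the inner product out explicitly, use $\phi_{u,s}(n,x) = q^{-un}\phi_{0,s}(n,x)$ from \eqref{eq:phiusphi0s} to pull the $v$- and $1$-dependence out of the polynomials, and substitute the definitions of $\phi_{0,s}$ and $\phi_{0,t}$ as $\rphisempty{3}{2}$'s in base $q^{-2}$. Collecting every factor of the form $q^{\alpha n}$ that appears outside the hypergeometric functions -- the prefactor $q^{n(s+k/2+1/2)}$ from $\phi_{0,s}$, the prefactor $q^{n(t+k/2+1/2)}$ from $\phi_{0,t}$, the factor $q^{-n(v+1)}$ from the shifts of $\phi_{1,s}$ and $\phi_{v,t}$, and the factor $q^{-n(k-1)}$ from $w_k(n)$ -- produces a net exponent $n(s+t+1-v)$, together with the Pochhammer ratio $(q^{2k};q^2)_n/(q^2;q^2)_n$. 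This is precisely the structure of the right-hand side of Lemma \ref{lem:summation3phi2to4phi3} after the substitution $q \to q^2$ and $(a,b,c,d) = (q^{2k}, q^s, q^t, q^{1-v})$: we have $bcd = q^{s+t+1-v}$, while $ab^2 = q^{2k+2s}$ and $ac^2 = q^{2k+2t}$ match the third numerator parameters of the two $\rphisempty{3}{2}$'s, and $1/a = q^{-2k}$ matches their bottom parameter.

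Convergence of the sum requires the first branch of the hypothesis in Lemma \ref{lem:summation3phi2to4phi3}, namely $|bcd|<1$ (the terminating branch $a=q^{-N}$ is not available here). With $|bcd| = q^{s+t+1-\mathrm{Re}(v)}$ and $0<q<1$, this is exactly the stated condition $\mathrm{Re}(v) < 1+s+t$. Applying Lemma \ref{lem:summation3phi2to4phi3} then replaces the sum by the $\rphisempty{4}{3}$ appearing on its left-hand side, and it remains only to simplify the accompanying prefactor with the above parameter specialisation to recover the constant $c_1$ as stated.

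The main obstacle is bookkeeping: one must carefully check that each $q$-Pochhammer factor in the prefactor of Lemma \ref{lem:summation3phi2to4phi3} -- namely $(abcd;q)_\infty/(bcd;q)_\infty$, $(bdq^{-y}/c;q)_y/(abcd;q)_y$, and $(q^{-x}/b^2;q)_x/(a;q)_x$ -- simplifies under the substitution to exactly the three factors defining $c_1$. A minor subtlety is that the inner product \eqref{eq:innerprod P} carries a complex conjugation on the second argument; however $\phi_{v,t}(n,y)$ depends on $v$ only through the scalar $q^{-vn}$, while the remaining $\rphisempty{3}{2}$ in base $q^{-2}$ is real for $t>-1$, so treating $v$ as a formal parameter (as in the proof of Proposition \ref{prop:overlaprational}) is justified and the stated formula follows.
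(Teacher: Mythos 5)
Your proposal is correct and follows exactly the paper's route: the corollary is obtained there directly from Lemma \ref{lem:summation3phi2to4phi3} with $q$ replaced by $q^2$ and $(a,b,c,d)=(q^{2k},q^s,q^t,q^{1-v})$, which is precisely your substitution, and your bookkeeping of the net power $q^{n(s+t+1-v)}$, the Pochhammer ratio $(q^{2k};q^2)_n/(q^2;q^2)_n$, and the convergence condition $|bcd|=q^{s+t+1-\mathrm{Re}(v)}<1$ all check out. One small remark: carrying the substitution through gives $bcd=q^{s+t-v+1}$ as the third numerator parameter of the $\rphisempty{4}{3}$ (which is also what balancedness of the series requires), so the exponent $s+t-v-1$ in the displayed statement appears to be a typo in the paper rather than a defect of your argument.
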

	\subsection{Biorthogonality and $q$-difference relations for $P_\mathrm{r}$}
		Using the orthogonality relations of the $q^{-1}$-Al-Salam--Chihara polynomials, we can derive (bi)orthogonality relations for $P_\mathrm{r}$.
		\begin{proposition}\label{prop:biorthraAW}
			If $|\mathrm{Re}(v)+1|<2+s+t$, we have
			\begin{align}
				&\sum_{x=0}^\infty P_\mathrm{r}(x,y;s,t,v,k)\overline{P_\mathrm{r}(x,y';s,t,-\overline{v}-2,k)}W_k(x,s;q)=\frac{\delta_{y,y'}}{W_k(y,t;q)}, \label{eq:RatAWOrthx}\\
				&\sum_{y=0}^\infty P_\mathrm{r}(x,y;s,t,v,k)\overline{P_\mathrm{r}(x',y;s,t,-\overline{v}-2,k)}W_k(y,t;q)=\frac{\delta_{x,x'}}{W_k(x,s;q)}.\label{eq:RatAWOrthy}
			\end{align}
		\end{proposition}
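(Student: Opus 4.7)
The proof will mirror that of Proposition \ref{prop:biorthraqra}, with the $q^{-1}$-Krawtchouk polynomials replaced by the $q^{-1}$-Al-Salam--Chihara polynomials and the finite-dimensional space $H_N$ replaced by the infinite-dimensional $H_k$. First I would introduce the weighted Hilbert space $\mathcal{H}_k^s$ of functions $f\colon\Z_{\geq 0}\to\C$ with inner product
\[
\langle f,g\rangle_{\mathcal{H}_k^s} = \sum_{x=0}^\infty f(x)\overline{g(x)}W_k(x,s;q),
\]
and define a linear operator $U_s\colon H_k\to\mathcal{H}_k^s$ on the dense domain $F_0$ by $(U_s f)(x) = \langle f,\phi_{0,s}(\cdot,x)\rangle_{H_k}$. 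By the dual orthogonality relations \eqref{eq:ASCOrthn} and \eqref{eq:ASCOrthx}, $U_s$ sends the orthogonal family $\{\delta_n/w_k(n)\}_{n\geq 0}$ of $H_k$ to the orthogonal family $\{\phi_{0,s}(n,\cdot)\}_{n\geq 0}$ of $\mathcal{H}_k^s$ while preserving norms, so it extends to a unitary operator.

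Next, using $\phi_{u,s}(n,x)=q^{-un}\phi_{0,s}(n,x)$ from \eqref{eq:phiusphi0s} and the realness of $\phi_{0,s}$, one checks that $(U_s\phi_{v+1,t}(\cdot,y))(x)=P_\mathrm{r}(x,y;s,t,v,k)$ as an identity of absolutely convergent series, and similarly for the parameter $-\overline{v}-2$. The condition $|\mathrm{Re}(v)+1|<2+s+t$ is precisely what is needed for both $P_\mathrm{r}(\cdot,y;s,t,v,k)$ and $P_\mathrm{r}(\cdot,y';s,t,-\overline{v}-2,k)$ to lie in $\mathcal{H}_k^s$ via Corollary \ref{cor:overlapratASC}: it is equivalent to $\mathrm{Re}(v)<1+s+t$ and $\mathrm{Re}(-\overline{v}-2)<1+s+t$ holding simultaneously.

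The main computation then reads
\begin{align*}
\langle P_\mathrm{r}(\cdot,y;s,t,v,k),\,P_\mathrm{r}(\cdot,y';s,t,-\overline{v}-2,k)\rangle_{\mathcal{H}_k^s}
&= \langle U_s\phi_{v+1,t}(\cdot,y),U_s\phi_{-\overline{v}-1,t}(\cdot,y')\rangle_{\mathcal{H}_k^s}\\
&= \langle \phi_{v+1,t}(\cdot,y),\phi_{-\overline{v}-1,t}(\cdot,y')\rangle_{H_k}\\
&= \sum_{n=0}^\infty \phi_{0,t}(n,y)\phi_{0,t}(n,y')w_k(n) = \frac{\delta_{y,y'}}{W_k(y,t;q)},
\end{align*}
where unitarity of $U_s$ yields the second line, the cancellation of the factors $q^{-(v+1)n}$ coming from $\phi_{v+1,t}(n,y)$ against $q^{(v+1)n}$ coming from $\overline{\phi_{-\overline{v}-1,t}(n,y')}$ gives the third line, and the last equality is \eqref{eq:ASCOrthn}. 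The relation \eqref{eq:RatAWOrthy} then follows from \eqref{eq:RatAWOrthx} by the symmetry $P_\mathrm{r}(x,y;s,t,v,k) = P_\mathrm{r}(y,x;t,s,v,k)$, which is immediate from the definition \eqref{eq:innerprod P} after expanding the Al-Salam--Chihara polynomials via \eqref{eq:phiusphi0s}.

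The hard part, which has no counterpart in the compact case, is justifying the analytic subtleties in the infinite-dimensional setting: proving that $U_s$ is genuinely unitary (i.e., the span of $\{\phi_{0,s}(n,\cdot)\}$ is dense in $\mathcal{H}_k^s$, which reduces to determinacy of the moment problem for the $q^{-1}$-Al-Salam--Chihara weight), showing that $\phi_{v+1,t}(\cdot,y)\in H_k$ under the stated hypothesis on $\mathrm{Re}(v)$, and controlling absolute convergence to legitimise the interchange of summation. These require asymptotic estimates on $\phi_{0,t}(n,y)$ and $w_k(n)$ as $n\to\infty$ and on how their growth rate couples with $q^{-\mathrm{Re}(v)n}$; this is exactly where the bound $|\mathrm{Re}(v)+1|<2+s+t$ enters quantitatively.
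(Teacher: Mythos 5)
Your proposal is correct and follows essentially the same route as the paper: the paper likewise introduces the weighted Hilbert space $\mathcal{H}_k^s$ and the intertwining operator (denoted $\Lambda_s$ there), reduces to the argument of Proposition \ref{prop:biorthraqra}, obtains the constraint $|\mathrm{Re}(v)+1|<s+t+2$ by requiring convergence for both parameters $v$ and $-\overline{v}-2$ via Corollary \ref{cor:overlapratASC}, and deduces \eqref{eq:RatAWOrthy} from the symmetry $P_\mathrm{r}(y,x;t,s)=P_\mathrm{r}(x,y;s,t)$. The only difference is that you flag the infinite-dimensional analytic issues (density, convergence interchange) more explicitly than the paper, which dispatches them with the remark that $\phi_{u,s}(\cdot,y)\in H_k$.
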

		\begin{proof}
			For this proof, let $\mathcal{H}_k^s$ be the Hilbert space with inner product given by
			\begin{align*}
				\langle f,g\rangle_{\mathcal{H}_k^s} = \sum_{x=0}^{\infty}f(x)\overline{g(x)}W_k(x,s).
			\end{align*}
			Let $\Lambda_s\colon H_k\to\mathcal{H}_k^s$ be the linear operator defined by
			\[
			(\Lambda_s f)(n)= \langle f, \phi_{0,s}(\cdot,x)\rangle_{H_k} = \sum_{n=0}^\infty f(n)\phi_{0,s}(n,x)w_k(n).
			\]
			Now, the proof is similar to the one of Proposition \ref{prop:biorthraqra}. It still works in the infinite dimensional case since $\phi_{u,s}(\cdot,y)\in H_k$ is in the linear span of delta-functions. Note that we need the sum in \eqref{eq:innerprod P} for both $P_\mathrm{r}(x,y;s,t,v,k)$ and $P_\mathrm{r}(x,y';s,t,-\overline{v}-2,k)$ to converge. For the first, we require $\text{Re}(v)<s+t+1$ (see Corollary \ref{cor:overlapratASC}). For the second, we replace $v$ by $-\overline{v}-2$ to get $-\text{Re}(v)-2 < s+t+1$. Combining these two constraints gives $|\text{Re}(v)+1|<s+t+2$. \\
			\indent The second biorthogonality relation \eqref{eq:RatAWOrthy} follows directly from the first since we have the symmetry $P_\mathrm{r}(y,x;t,s)=P_\mathrm{r}(x,y;s,t)$.
		\end{proof}
		\begin{remark}
			Similar to Remark \ref{rem:biorth=orth}, we have orthogonality when $\text{Re}(v)=-1$.
		\end{remark}
		Let us now turn to the recurrence relation for $P_\mathrm{r}$. Similar to \eqref{eq:CorGEVP}, we want to use the GEVP \eqref{eq:GEVPY} to show that
		\begin{align}
		\{2x+k+s\}_q \Big\langle \phi_{1,s}(\cdot,x),\pi_k(K^2)\phi_{v,t}(\cdot,y) \Big\rangle_{H_k} = \Big\langle \phi_{1,s}(\cdot,x),\pi_k(Y_{0,s})\phi_{v,t}(\cdot,y) \Big\rangle_{H_k}. \label{eq:GEVPtorecrel}
		\end{align}
		We know that $\pi_k(K^2)$ and $\pi_k(Y_{0,s})$ are symmetric with respect the inner product restricted to finitely supported functions, but not necessarily self-adjoint. Since $\pi_k(K^2)$ is bounded, it is self-adjoint. However, $\pi_k(Y_{0,s})$ is unbounded and we still have to show it is symmetric in our case, i.e.
		\begin{align}
			\Big\langle \pi_k(Y_{0,s})\phi_{1,s}(\cdot,x),\phi_{v,t}(\cdot,y) \Big\rangle_{H_k}= \Big\langle \phi_{1,s}(\cdot,x),\pi_k(Y_{0,s})\phi_{v,t}(\cdot,y) \Big\rangle_{H_k}. \label{eq:Y0ssym}
		\end{align}
		This can be proved using the dominated convergence theorem and the following Lemma, which is the analogue of Lemma \ref{lem:threetermKra}, showing that we can transfer the action of $\pi_k(K^2)$ and $\pi_k(Y_{0,s})$ on the $n$-variable of $\phi_{v,t}$ to the $y$-variable.
		\begin{lemma}\label{lem:threetermASC}
			We have
			\begin{align}
				\left[\pi_k(K^2)\phi_{v,t}(\cdot,y)\right](n) &= c_{-1}^{} \phi_{v,t}(n,y-1) + c_{0}^{}\phi_{v,t}(n,y)+ c_{1}^{} \phi_{v,t}(n,y+1),\label{eq:3termK2phi}\\
				\left[\pi_k(Y_{0,s})\phi_{v,t}(\cdot,y)\right](n) &=  d_{-1}^{} \phi_{v,t}(n,y-1) + (d_{0}^{}+\{s\}_q) \phi_{v,t}(n,y)+ d_{1}^{} \phi_{v,t}(n,y+1),\label{eq:3termY}
			\end{align}
			where
			\begin{align*}
				c_{-1}^{}(y,t) &= -\frac{q^{-4y-2t-3k+2}(1-q^{-2y})(1-q^{-2y-2t})}{(1+q^{-4y-2t-2k+2})(1-q^{-4y-2t-2k})}, \\
				c_{0}^{}(y,t) &= -(c_{-1}(y)+c_{1}(y)-1),\\
				c_{1}^{}(y,t) &= \frac{q^{k}(1-q^{-2y-2k})(1+q^{-2y-2t-2k})}{(1+q^{-4y-2t-2k})(1+q^{-4y-2t-2k-2})},
			\end{align*}
			and
			\begin{align*}
				d_{-1}(y,t)&= c_{-1}(y)\{2y+t+k+v-1\}_q ,\\
				d_{0}(y,t)&= c_0(y)\{2y+k+t\}_q\{v\}_q- \{t\}_q\{v\}_q ,\\
				d_{1}(y,t)&= c_1(y)\{2y+k+t-v+1\}_q .
			\end{align*}
		\end{lemma}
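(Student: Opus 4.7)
The proof will mirror that of Lemma \ref{lem:threetermKra}, making the necessary sign and $[\cdot]_q \leftrightarrow \{\cdot\}_q$ substitutions appropriate for the non-compact setting. For the first identity \eqref{eq:3termK2phi}, I would observe that $\pi_k(K^2)$ acts on $H_k$ simply as multiplication by $q^{2n+k}$. The three-term recurrence relation for the Al-Salam--Chihara polynomials $Q_n(x;a,b;q)$ from \cite[(14.8.4)]{KLS}, after replacing $q$ by $q^{-2}$, substituting $a=q^{-s-k}$, $b=q^{s-k}$ and $x\mapsto q^{-2y-s-k}$, yields directly a three-term relation in the $y$-variable for $\phi_{v,t}(n,y)$. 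Matching coefficients (and absorbing the factor $q^k$ so as to produce $q^{2n+k}$ rather than $q^{2n-N}$ as in the compact case) produces the coefficients $c_{-1},c_0,c_1$ displayed in the statement.

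For the second identity \eqref{eq:3termY}, the key ingredient is Corollary \ref{cor:YusinK2Yvt} with $u=0$, which gives
\begin{equation*}
Y_{0,s}=\frac{q^{v}\bigl[K^2,\widetilde{Y}_{v,t}\bigr]_q + q^{-v}\bigl[\widetilde{Y}_{v,t},K^2\bigr]_q}{q^2-q^{-2}}-\{t\}_q\{v\}_q+\{s\}_q.
\end{equation*}
Applying $\pi_k$ to both sides and letting the result act on $\phi_{v,t}(\cdot,y)$, I would use \eqref{eq:3termK2phi} to evaluate the $\pi_k(K^2)$ factor and the eigenvalue equation \eqref{eq:widetildeYEV} to evaluate $\pi_k(\widetilde{Y}_{v,t})$. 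Since $\widetilde{Y}_{v,t}$ acts diagonally with eigenvalue $\{2y+k+t\}_q$, while $K^2$ shifts $y$ by $\pm 1$ or $0$, each $q$-commutator produces a linear combination of $\phi_{v,t}(n,y\pm 1)$ and $\phi_{v,t}(n,y)$. The coefficient of $\phi_{v,t}(n,y-1)$ is
\begin{equation*}
\frac{c_{-1}(y,t)\bigl(q^{v}(q\{2y+k+t\}_q-q^{-1}\{2y-2+k+t\}_q)+q^{-v}(q\{2y-2+k+t\}_q-q^{-1}\{2y+k+t\}_q)\bigr)}{q^2-q^{-2}},
\end{equation*}
which simplifies using the identity $q\{a\}_q-q^{-1}\{a-2\}_q=\{a-1\}_q(q-q^{-1})$ (a direct consequence of the definition of $\{\cdot\}_q$) to $c_{-1}(y,t)\{2y+k+t+v-1\}_q$. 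The coefficient of $\phi_{v,t}(n,y+1)$ reduces to $c_1(y,t)\{2y+k+t-v+1\}_q$ by an entirely analogous simplification. The diagonal term collects $c_0(y,t)\{2y+k+t\}_q\{v\}_q$ from the commutators together with the constants $-\{t\}_q\{v\}_q+\{s\}_q$, giving the stated $d_0+\{s\}_q$.

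In brief, the only real work is routine algebraic bookkeeping of $q$-numbers inside the $q$-commutator expansion; the conceptual content is entirely contained in Corollary \ref{cor:YusinK2Yvt}, the recurrence \eqref{eq:3termK2phi}, and the eigenvalue equation \eqref{eq:widetildeYEV}. The main potential obstacle is verifying that the $\{\cdot\}_q$-identity analogous to the $[\cdot]_q$-identity used in the proof of Lemma \ref{lem:threetermKra} produces the correct cancellations; this can also be obtained mechanically by applying the substitution $(q^u,q^v,[s]_q,[t]_q)\leftrightarrow(iq^u,-iq^v,i\{s\}_q,i\{t\}_q)$ used in the proof of Corollary \ref{cor:YusinK2Yvt} to the coefficients $A$, $B$, $C$, $D$ appearing in the proof of Lemma \ref{lem:threetermKra} and tracking the resulting factors of $i$.
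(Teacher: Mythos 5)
Your proposal follows the paper's proof exactly: \eqref{eq:3termK2phi} from the Al-Salam--Chihara relation in KLS under the stated substitutions, and \eqref{eq:3termY} from Corollary \ref{cor:YusinK2Yvt} with $u=0$ combined with \eqref{eq:3termK2phi} and the eigenvalue equation \eqref{eq:widetildeYEV}, exactly as in Lemma \ref{lem:threetermKra}. The displayed coefficient of $\phi_{v,t}(n,y-1)$ and the final values of $d_{-1},d_0,d_1$ are all correct.

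Two slips are worth fixing. First, the relation that is tridiagonal in the variable $y$ with multiplication by $q^{2n+k}$ on the other side is the $q$-\emph{difference equation} \cite[(14.8.7)]{KLS}, not the three-term recurrence \cite[(14.8.4)]{KLS}; the latter is tridiagonal in the degree $n$ and is what underlies \eqref{eq:widetildeYEV}, so as cited it cannot ``yield directly a three-term relation in the $y$-variable.'' Second, the intermediate identity you invoke is false: one has
\begin{equation*}
q\{a\}_q-q^{-1}\{a-2\}_q=q^{a-1}\,(q-q^{-1}),\qquad\text{not}\qquad \{a-1\}_q\,(q-q^{-1}).
\end{equation*}
The correct simplification only emerges after combining the $q^{v}$- and $q^{-v}$-weighted terms: with $A=2y+k+t$,
\begin{equation*}
q^{v}\bigl(q\{A\}_q-q^{-1}\{A-2\}_q\bigr)+q^{-v}\bigl(q\{A-2\}_q-q^{-1}\{A\}_q\bigr)=(q-q^{-1})\bigl(q^{A+v-1}+q^{-(A+v-1)}\bigr)=(q^2-q^{-2})\{A+v-1\}_q,
\end{equation*}
which after division by $q^2-q^{-2}$ gives $d_{-1}=c_{-1}\{2y+k+t+v-1\}_q$ as claimed. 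Neither issue affects the validity of the overall argument, but as written a reader applying your stated identity term by term would not reach the stated coefficients.
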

		\begin{proof}
			Since $\pi_N^{}(K^2)$ is just multiplication by $q^{2n+k}$, the first equation \eqref{eq:3termK2} follows from the $q$-difference equation of the Al-Salam--Chihara polynomials (\cite[(14.8.7)]{KLS} after replacing $q$ by $q^{-2}$, $z$ by $aq^{-2x}$, and substituting $(a,b)=(q^{-s-k},q^{s-k})$.\\
			\indent The other equation follows from Corollary \ref{cor:YusinK2Yvt}, similarly as done in the proof of Lemma \ref{lem:threetermKra}.
		\end{proof}
		\noindent Now we are ready to prove a recurrence relation for $P_\mathrm{r}$.
		\begin{proposition}\label{prop:GEVPP(x,y)}
			The rational function $P_\mathrm{r}(x,y)$ satisfies
			\begin{multline*}
				\{2x+k+s\}_q\Big(c_{-1}^{}(y,t) P_\mathrm{r}(x,y-1) + c_{0}^{}(y,t) P_\mathrm{r}(x,y)+ c_{1}^{}(y,t) P_\mathrm{r}(x,y+1)\Big) \\
				= d_{-1}(y,t) P_\mathrm{r}(x,y-1) + (d_{0}(y,t)+\{s\}_q)(y) P_\mathrm{r}(x,y)+ d_{1}(y,t) P_\mathrm{r}(x,y+1).
			\end{multline*}
		\end{proposition}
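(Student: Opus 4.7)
The plan is to mirror the strategy used to prove Corollary \ref{cor:GEVPR(x,y)} in the compact case, namely to establish the analogue of \eqref{eq:CorGEVP} and then substitute the transfer formulas of Lemma \ref{lem:threetermASC}. Since $\phi_{1,s}(\cdot,x)$ solves the rewritten GEVP \eqref{eq:GEVPYrewritten}, the identity
\[
    \pi_k(Y_{0,s})\phi_{1,s}(\cdot,x) = \{2x+k+s\}_q\,\pi_k(K^2)\phi_{1,s}(\cdot,x)
\]
holds in $H_k$. Pairing against $\phi_{v,t}(\cdot,y)$ and transferring both $\pi_k(Y_{0,s})$ and $\pi_k(K^2)$ to the right-hand argument of $\langle\cdot,\cdot\rangle_{H_k}$ yields exactly \eqref{eq:GEVPtorecrel}. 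Once this is in place, substituting \eqref{eq:3termK2phi} into the left side and \eqref{eq:3termY} into the right side of \eqref{eq:GEVPtorecrel} rearranges directly into the claimed three-term recurrence for $P_\mathrm{r}(x,y)$.

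The bounded operator $\pi_k(K^2)$ is self-adjoint on $H_k$, so moving it across the inner product is immediate. The delicate point is the analogous manipulation for the unbounded $\pi_k(Y_{0,s})$, i.e.\ the symmetry relation \eqref{eq:Y0ssym}. Since neither $\phi_{1,s}(\cdot,x)$ nor $\phi_{v,t}(\cdot,y)$ lies in the dense domain $F_0$ on which the $*$-representation property was directly verified, I would justify \eqref{eq:Y0ssym} by a truncation argument: set $f_M = \phi_{1,s}(\cdot,x)\,\mathbf{1}_{\{0,\dots,M\}}\in F_0$, apply symmetry to each $f_M$, and pass $M\to\infty$.

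The key ingredient for taking this limit is Lemma \ref{lem:threetermASC} itself, which shows that $\pi_k(Y_{0,s})\phi_{v,t}(\cdot,y)$ is a finite linear combination of $\phi_{v,t}(\cdot,y')$ with $y'\in\{y-1,y,y+1\}$, and is therefore in $H_k$. Consequently the right-hand side of \eqref{eq:Y0ssym} evaluated on $f_M$ converges to its value on $\phi_{1,s}(\cdot,x)$ simply by continuity of the inner product. For the left-hand side, the eigenvalue identity above shows that $\pi_k(Y_{0,s})\phi_{1,s}(\cdot,x) = \{2x+k+s\}_q \pi_k(K^2)\phi_{1,s}(\cdot,x)$ is also in $H_k$; combined with the explicit three-term action of $\pi_k(Y_{0,s})$ on truncations and the growth of $w_k(n)$, this gives a pointwise-dominated sequence, so the dominated convergence theorem applies.

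The main obstacle is thus the analytic verification of \eqref{eq:Y0ssym}; the algebra reduces entirely to Corollary \ref{cor:YusinK2Yvt} and Lemma \ref{lem:threetermASC}, but one must take care that the unbounded action of $\pi_k(Y_{0,s})$ against the $q^{-1}$-Al-Salam--Chihara polynomials produces $w_k$-integrable sequences, ensuring that truncation and passage to the limit are legitimate. Once \eqref{eq:Y0ssym} and hence \eqref{eq:GEVPtorecrel} are in hand, the proposition follows by a direct application of Lemma \ref{lem:threetermASC}, with no further computation beyond collecting the coefficients $c_\bullet$ and $d_\bullet$.
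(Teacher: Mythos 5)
Your proposal is correct and follows essentially the same route as the paper: derive \eqref{eq:GEVPtorecrel} from the rewritten GEVP, justify the symmetry relation \eqref{eq:Y0ssym} for the unbounded operator $\pi_k(Y_{0,s})$ by truncating to finitely supported functions and passing to the limit via dominated convergence (using the uniform bound on the boundary terms $q^{N+a}[N+b]_q$ and the fact that Lemma \ref{lem:threetermASC} places $\pi_k(Y_{0,s})\phi_{v,t}(\cdot,y)$ in $H_k$), and then substitute the transfer formulas \eqref{eq:3termK2phi} and \eqref{eq:3termY}. The only cosmetic difference is that you truncate one argument of the inner product while the paper truncates both, which does not affect the substance of the argument.
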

		\begin{proof}
			This follows from \eqref{eq:GEVPtorecrel} using Lemma \ref{lem:threetermASC} and the definition of $P_\mathrm{r}(x,y)$. However, we still need to prove \eqref{eq:Y0ssym}, before we can use \eqref{eq:GEVPtorecrel}. For each $N\in \Z_{\geq 0}$, define
			\[
				\phi^N_{v,t}(n,y)= \begin{cases}
					\phi_{v,t}(n,y)\qquad &\text{if}\ n\leq N,\\
					0\qquad &\text{else}.
				\end{cases}
			\]
			Then 
			\begin{align}
				\Big\langle \pi_k(Y_{0,s})\phi^N_{1,s}(\cdot,x),\phi_{v,t}^N(\cdot,y) \Big\rangle_{H_k}= \Big\langle \phi^N_{1,s}(\cdot,x),\pi_k(Y_{0,s})\phi^N_{v,t}(\cdot,y) \Big\rangle_{H_k}, \label{eq:Y0ssymM}
			\end{align}
			since $\pi_k(Y_{0,s})$ is symmetric with respect to functions with finite support. Rewriting above equation gives
			\begin{align}
				\sum_{n=0}^\infty \big[\pi_k(Y_{0,s})\phi^N_{1,s}(\cdot,x)\big](n)\phi_{v,t}^N(n,y)w_k(n)- \phi^N_{1,s}(n,x)\big[\pi_k(Y_{0,s})\phi^N_{v,t}(\cdot,y)\big](n) w_k(n) = 0. \label{eq:sumDCT}
			\end{align} 
			We then take the limit $N\to\infty$ and use the dominated convergence theorem to pull this limit into the infinite sum. To justify this, we use the explicit action of the tridiagonal operator $\pi_k(Y_{0,s})$ to get
			\begin{align*}
				\big[\pi_k(Y_{0,s})\phi^N_{v,t}(\cdot,y)\big](n) = \begin{cases}
					\big[\pi_k(Y_{0,s})\phi_{v,t}(\cdot,y)\big](n) &\text{ if } n\leq N-1,\\
					\big[\pi_k(Y_{0,s})\phi_{v,t}(\cdot,y)\big](n) +q^{N+\half k+\half}[N+k]_q \phi_{v,t}(N+1,y) &\text{ if } n=N,\\
					q^{N+\half k+\half}[N+1]_q \phi_{v,t}(N,y) &\text{ if } n=N+1,\\
					0 &\text{ else.}
				\end{cases}
			\end{align*}
			Thus, by Lemma \ref{lem:threetermASC} and the estimate
			\[
			\left|q^{N+a}[N+b]_q\right|  = \left| \frac{q^{2N + a + b}-q^{a-b}}{q-q^{-1}}\right| \leq \frac{q^{a+b}+q^{a-b}}{|q-q^{-1}|},
			\]
			we know that there exist constants $C_1,C_2,C_3,C_4$, independent of $N$, such that 
			\begin{align*}
				\left| \big[\pi_k(Y_{0,s})\phi^N_{v,t}(\cdot,y)\big](n)\right| \leq& C_1 |\phi_{v,t}(n,y-1)| + C_2 |\phi_{v,t}(n,y)| + C_3 |\phi_{v,t}(n,y+1)| \\
				&+ C_4 |\phi_{v,t}(n+1,y)|+ C_5 |\phi_{v,t}(n-1,y)|.
			\end{align*}
			Using these estimate for \eqref{eq:sumDCT}, gives a dominating function. To see that this function is integrable, apply the Cauchy-Schwarz inequality and use that $\phi_{v,t}(\cdot,y)\in H_k$ for each $y\in \Z_{\geq 0}$.
		\end{proof}
	\subsection{Multivariate rational functions as overlap coefficients in $\U_q(\mathfrak{su}_{1,1})^{\otimes M}$}
		Let $\vec{k}=(k_1,\ldots,k_M)$ with each $k_j >0$, then we denote by $H_{\vec{k}}$ the closure (with respect to the inherited norm) of the algebraic tensor product $H_{k_1}\otimes\cdots\otimes H_{k_M}$. This Hilbert space is isomorphic to the Hilbert space of functions on $\Z_{\geq 0}^M$ with  inner product
		\[
			\langle f,g\rangle_{H_{\vec{k}}} = \sum_{n_1=0}^\infty \cdots \sum_{n_M=0}^\infty f(\vec{n})\overline{g(\vec{n})}w_{\vec{k}}(\vec{n}),
		\]
		where $w_{\vec{k}}(\vec{n})$ is the product of the univariate weights $w_{k_j}$,
		\[
			w_{\vec{k}}^{}(\vec{n}) = \prod_{j=1}^{M}w^{}_{k_j}(n_j).
		\]
		Now we can define the following multivariate analogue of the GEVP \eqref{eq:GEVPY} and EVP \eqref{eq:EVPY}. We want to find $f,g \in H_{\vec{k}}$ such that for all $j=1,\ldots,M,$
	\begin{align}
		\pi_{\vec{k}}  \Big(\Delta^{j-1}_\mathrm{L}\big(Y_{0,s}\big)\Big)f&=\lambda_j \pi_{\vec{k}}^{}\Big(\Delta^{j-1}_\mathrm{L}\big(K^2\big)\Big) f, \label{eq:GEVPUqmultiY}\\
		\pi_{\vec{k}}\Big(\Delta^{j-1}_\mathrm{L}\big(\widetilde{Y}_{v,t}\big)\Big)g&= \mu_j g. \label{eq:EVPYtmulti}
	\end{align}
	Again, we can rewrite \eqref{eq:GEVPUqmultiY} to an EVP by multiplying both sides by $\pi_{\vec{k}}^{}\big(\Delta^{j-1}_\mathrm{L}(K^{-2})\big)$,
	\begin{align}
		\pi_{\vec{k}}\Big(\Delta^{j-1}_\mathrm{L}(\widetilde{Y}_{1,s})\Big)f&= \lambda_j f.\label{eq:GEVPrewrittenmultiY}
	\end{align}
	 Solving these EVPs goes similar as for the finite dimensional case. We first show that a nested product of $q^{-1}$-Al-Salam--Chihara polynomials are eigenfunctions of $\pi_{\vec{k}}\big(\Delta^{j-1}_\mathrm{L}(\widetilde{Y}_{v,t})\big)$. Define the height function $h'$ by
	\begin{align*}
		h'_{j}=h_j'(\vec{y},t)=t + \sum_{i=1}^j 2y_i + k_i,
	\end{align*} 
	which can be obtained from the height function $h$ by replacing each $N_i$ by $-k_i$. Let
	\begin{align*}
		\Phi_{v,t}\big(\vec{n},\vec{y}\big) = \prod_{j=1}^{M} \phi^{}_{v,h'_{j-1}(\vec{y},t)}(n_j,y_j),
	\end{align*}
	Then we have the following proposition, which is the analogue of Proposition \ref{prop:multivarkrawEV}, which is proved in the same way.
	\begin{proposition}\label{prop:multivarASCEV}
		$\Phi_{v,t}\big(\vec{n},\vec{y}\big)$	are eigenfunctions of $\pi_{\vec{k}}\big(\Delta^{j-1}_\mathrm{L}(\widetilde{Y}_{v,t})\big)$ for $j=1,...,M$,
		\begin{align}
			\big[\pi_{\vec{k}}\Big(\Delta^{j-1}_\mathrm{L}(\widetilde{Y}_{v,t})\Big)\Phi_{v,t}(\,\cdot\,,\vec{y})\big](\vec{n}) = \{h'_{j}(\vec{y},t)\}_q \Phi_{v,t}(\vec{n},\vec{y}). \label{eq:deltaYtuev}
		\end{align}
	\end{proposition}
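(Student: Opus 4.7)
The plan is to mirror the inductive proof of Proposition \ref{prop:multivarkrawEV} word-for-word, with the key substitutions being $\widetilde{X}\to \widetilde{Y}$, $[\cdot]_q\to\{\cdot\}_q$, the compact representation $\pi_N$ by the non-compact representation $\pi_k$, and the height function $h_j$ by $h'_j$. The base case $j=1$ reduces immediately to the univariate eigenvalue equation \eqref{eq:widetildeYEV} applied in the factor $H_{k_1}$, since $\Delta^0_\mathrm{L}(\widetilde{Y}_{v,t})=\widetilde{Y}_{v,t}\otimes 1^{\otimes(M-1)}$ and the eigenvalue is $\{2y_1+k_1+t\}_q=\{h'_1(\vec{y},t)\}_q$.

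For the induction step, the first thing I would do is compute $\Delta^j(\widetilde{Y}_{v,t})=(\Delta^{j-1}\otimes 1)\Delta(\widetilde{Y}_{v,t})$ using the coproduct formula \eqref{eq:DeltatildeY}, which gives
\[
\Delta^j(\widetilde{Y}_{v,t}) = \underbrace{1\otimes\cdots\otimes 1}_{j\text{ times}}\otimes \widetilde{Y}_{v,0} + \Delta^{j-1}(\widetilde{Y}_{v,t})\otimes K^{-2},
\]
exactly as in the compact case. Tensoring on the right with $1^{\otimes(M-j-1)}$ and applying $\pi_{\vec{k}}$, the second summand splits as the product of $\pi_{\vec{k}}(\Delta^{j-1}_\mathrm{L}(\widetilde{Y}_{v,t}))$ with $\pi_{k_{j+1}}(K^{-2})$ on the $(j+1)$-th factor (these two operators commute because they act on disjoint tensor slots).

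Next I would apply both pieces to $\Phi_{v,t}(\cdot,\vec{y})$. On the second summand the induction hypothesis produces the eigenvalue $\{h'_j(\vec{y},t)\}_q$, while $\pi_{k_{j+1}}(K^{-2})$ contributes the factor $q^{-2n_{j+1}-k_{j+1}}$ from \eqref{eq:representationk}. For the first summand I would use the rewriting
\[
\widetilde{Y}_{v,0}=\widetilde{Y}_{v,h'_j}-\{h'_j\}_q K^{-2},
\]
which is immediate from the definition of $\widetilde{Y}_{u,s}$. Then \eqref{eq:widetildeYEV}, applied in $H_{k_{j+1}}$ with parameters $s\mapsto h'_j$ and $k\mapsto k_{j+1}$, yields
\[
\pi_{k_{j+1}}(\widetilde{Y}_{v,h'_j})\phi_{v,h'_j}(\cdot,y_{j+1}) = \{2y_{j+1}+k_{j+1}+h'_j\}_q \phi_{v,h'_j}(\cdot,y_{j+1}) = \{h'_{j+1}(\vec{y},t)\}_q \phi_{v,h'_j}(\cdot,y_{j+1}).
\]
The $\{h'_j\}_q K^{-2}$ coming from rewriting $\widetilde{Y}_{v,0}$ then cancels exactly with the $\{h'_j\}_q q^{-2n_{j+1}-k_{j+1}}$ contribution from the second summand, leaving the claimed eigenvalue $\{h'_{j+1}(\vec{y},t)\}_q$.

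There is no real obstacle here: the entire argument is dictated by the combinatorics of the coproduct and the univariate eigenvalue equation, and the cancellation of the $K^{-2}$ terms is what makes the height function advance by exactly $2y_{j+1}+k_{j+1}$ at each step. The only points that warrant a sentence of care in the write-up are (i) verifying that $\pi_{k_{j+1}}(K^{-2})$ commutes with $\pi_{\vec{k}}(\Delta^{j-1}_\mathrm{L}(\widetilde{Y}_{v,t}))$ on the appropriate dense domain, which is immediate since they act on disjoint factors of the algebraic tensor product, and (ii) observing that since $\Phi_{v,t}(\vec{n},\vec{y})$ has finite support in each $n_i$ variable only via being a polynomial—actually the relevant statement is that we work on the dense domain where $\pi_{\vec{k}}$ is defined factor by factor, inherited from the univariate $F_0$.
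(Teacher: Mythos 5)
Your proof is correct and follows the same route the paper takes: the paper proves this proposition by declaring it ``proved in the same way'' as Proposition \ref{prop:multivarkrawEV}, whose proof is exactly your induction on $j$ using the coproduct identity \eqref{eq:DeltatildeY}, the recombination $\widetilde{Y}_{v,0}+\{h'_j\}_qK^{-2}=\widetilde{Y}_{v,h'_j}$, and the univariate eigenvalue equation \eqref{eq:widetildeYEV} with shifted parameter. The only blemish is your final parenthetical about finite support, which is garbled ($\phi_{v,t}(\cdot,y)$ does not have finite support in $n$); the correct remark is simply that the operators involved act tridiagonally in $n$, so their pointwise action on $\Phi_{v,t}$ is well defined, a domain issue the paper also leaves implicit.
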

	\noindent From this proposition we get that $\Phi_{1,s}(\cdot,\vec{y})$ and $\Phi_{v,t}(\cdot,\vec{x})$ solve \eqref{eq:GEVPUqmultiY} and \eqref{eq:EVPYtmulti} with $$\lambda_j = \{h'_j(\vec{x},s)\}_q\qquad \text{and}\qquad \mu_j = \{h'_j(\vec{y},t)\}_q$$ respectively.
	Let
	\begin{align*}
		P_\mathrm{r}(\vec{x},\vec{y})=P_\mathrm{r}(\vec{x},\vec{y};s,t,v,\vec{k};q)= \langle \Phi_{1,s}(\,\cdot\,,\vec{x}),\Phi_{v,t}(\,\cdot\,,\vec{y})  \rangle_{H_{\vec{k}}}
	\end{align*}
	be the overlap coefficients of the solutions to these GEVPs and EVPs.
	Since
	\[
	\langle \Phi_{1,s}(\,\cdot\,,\vec{x}),\Phi_{v,t}(\,\cdot\,,\vec{y})  \rangle_{H_{\vec{k}}} = \prod_{j=1}^M \langle \phi_{1,h'_{j-1}(\vec{x},s)}(\cdot,x_j), \phi_{v,h'_{j-1}(\vec{y},t)}(\cdot,y_j) \rangle_{H_{k_j}}^{},
	\]
	it follows from applying Corollary \ref{cor:overlapratASC} $M$ times that $P_\mathrm{r}(\vec{x},\vec{y})$ is a nested product of the univariate rational functions $P_\mathrm{r}(x,y)$.
	\begin{corollary} We have
		\begin{align*}
			P_\mathrm{r}(\vec{x},\vec{y})=\prod_{j=1}^M P_\mathrm{r}(x_j,y_j;h'_{j-1}(s,\vec{x}),h_{j-1}'(t,\vec{y}),v,\vec{k}).
		\end{align*}
	\end{corollary}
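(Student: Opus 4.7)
The plan is to unfold the definition of $P_\mathrm{r}(\vec{x},\vec{y})$ into a product of univariate inner products, and then apply Corollary~\ref{cor:overlapratASC} to each factor. This is exactly parallel to the argument sketched in the paragraph preceding the statement, so the work consists in spelling out each step carefully and checking convergence.

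First, I would write out the multivariate inner product explicitly. By definition of $H_{\vec{k}}$ and the product weight $w_{\vec{k}}(\vec{n})=\prod_{j=1}^M w_{k_j}(n_j)$,
\[
P_\mathrm{r}(\vec{x},\vec{y}) = \sum_{n_1=0}^\infty\cdots\sum_{n_M=0}^\infty \Phi_{1,s}(\vec{n},\vec{x})\,\overline{\Phi_{v,t}(\vec{n},\vec{y})}\,\prod_{j=1}^M w_{k_j}(n_j).
\]
Since both $\Phi_{1,s}(\vec{n},\vec{x})=\prod_{j=1}^M \phi_{1,h'_{j-1}(\vec{x},s)}(n_j,x_j)$ and $\Phi_{v,t}(\vec{n},\vec{y})=\prod_{j=1}^M \phi_{v,h'_{j-1}(\vec{y},t)}(n_j,y_j)$ are \emph{nested} products in which the $j$-th factor depends on $\vec{n}$ only through $n_j$ (the height functions $h'_{j-1}(\vec{x},s)$ and $h'_{j-1}(\vec{y},t)$ being independent of $\vec{n}$), the multiple sum factorises as an $M$-fold product of single sums, yielding
\[
P_\mathrm{r}(\vec{x},\vec{y}) \;=\; \prod_{j=1}^M \bigl\langle \phi_{1,h'_{j-1}(\vec{x},s)}(\cdot,x_j),\, \phi_{v,h'_{j-1}(\vec{y},t)}(\cdot,y_j) \bigr\rangle_{H_{k_j}}.
\]
By the definition \eqref{eq:innerprod P} of the univariate overlap coefficient, the $j$-th factor is precisely $P_\mathrm{r}(x_j,y_j;h'_{j-1}(\vec{x},s),h'_{j-1}(\vec{y},t),v,k_j;q)$, and the identity of the corollary follows.

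The main technical point is the Fubini-type argument that allows the $M$-fold sum to be rewritten as a product of univariate sums; this requires absolute convergence of each factor. By Corollary~\ref{cor:overlapratASC}, the $j$-th univariate sum converges provided $\mathrm{Re}(v)<1+h'_{j-1}(\vec{x},s)+h'_{j-1}(\vec{y},t)$, and since the $h'_{j-1}$ are real and nondecreasing in $j$ (each step adds a nonnegative quantity $2y_i+k_i$ or $2x_i+k_i$ with $k_i>0$), it suffices to require the condition for $j=1$, namely $\mathrm{Re}(v)<1+s+t$. Under this assumption every partial sum is dominated by a product of absolutely convergent series, and Fubini's theorem justifies the factorisation. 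No further delicate analysis is needed, so this corollary is genuinely a bookkeeping consequence of the univariate Corollary~\ref{cor:overlapratASC} together with the product structure of $\Phi_{v,t}$ and the tensor-product weight.
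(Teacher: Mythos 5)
Your proposal is correct and follows essentially the same route as the paper: the tensor-product inner product of the nested products $\Phi_{1,s}$ and $\Phi_{v,t}$ factorises into $M$ univariate inner products over $H_{k_j}$, each of which is by definition (and by Corollary~\ref{cor:overlapratASC}) the univariate $P_\mathrm{r}(x_j,y_j)$ with shifted parameters $h'_{j-1}$. Your additional remark that the convergence condition of Corollary~\ref{cor:overlapratASC} for all factors reduces to the $j=1$ case because the height functions $h'_{j-1}$ are nondecreasing is a small but accurate refinement of what the paper leaves implicit.
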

	Let us next consider orthogonality relations for $P_\mathrm{r}(\vec{x},\vec{y})$. From the 
	orthogonality relations \eqref{eq:ASCOrthn} and \eqref{eq:ASCOrthx} of the univariate $q^{-1}$-Al-Salam--Chihara polynomials we get orthogonality for the multivariate ones. That is, for $s>-1$ we have,
	\begin{align}
		&\sum_{n^{}_M=0}^{\infty} \cdots \sum_{n^{}_1=0}^{\infty} \Phi_{0,s}(\vec{n},\vec{x})\Phi_{0,s}(\vec{n},\vec{x}')w_{\vec{k}}(\vec{n}) = \frac{\delta_{\vec{x},\vec{x}'}}{W_{\vec{k}}(x,s)},\label{eq:ASCOrthnMulti}\\
		&\sum_{x^{}_1=0}^{\infty} \cdots \sum_{x^{}_M=0}^{\infty} \Phi_{0,s}(\vec{n},\vec{x})\Phi_{0,s}(\vec{n},\vec{x}')W_{\vec{k}}(\vec{x},s) = \frac{\delta_{\vec{n},\vec{n}'}}{w_{\vec{k}}(\vec{n})},\label{eq:ASCOrthxMulti}
	\end{align}
	where the weight function $W^{}_{\vec{k}}(\vec{x},s)$ is a (nested) product of the univariate ones,
	\begin{align*}
		W_{\vec{k}}(\vec{x},s) &= \prod_{j=1}^M W_{k_j}(x_j,h'_{j-1}(\vec{x},s)).
	\end{align*}
	Since $P_\mathrm{r}(\vec{x},\vec{y})$ are overlap coefficients of (almost) orthogonal bases, it satisfies (bi)orthogonality relations.
	\begin{proposition} \label{prop:biorthraAWmulti}
			Let $|\text{Re}(v)+1|<2+s+t$, then the multivariate rational functions $P_\mathrm{r}(\vec{x},\vec{y})$ satisfy the following orthogonality relations,
		\begin{align}
			&\sum_{x^{}_1=0}^{\infty}\cdots \sum_{x^{}_M=0}^{\infty} P_\mathrm{r}(\vec{x},\vec{y};s,t,v,\vec{k})\overline{P_\mathrm{r}(\vec{x},\vec{y}\hspace{0.015cm}';s,t,-\overline{v}-2,\vec{k})}W_{\vec{k}}(\vec{x},s)=\frac{\delta_{\vec{y},\vec{y}' }}{W_{\vec{k}}(\vec{y},t)}, \label{eq:RatAWMultiOrthx}\\
			&\sum_{y^{}_1=0}^{\infty}\cdots \sum_{y^{}_M=0}^{\infty} P_\mathrm{r}(\vec{x},\vec{y};s,t,v,\vec{k})\overline{P_\mathrm{r}(\vec{x}\hspace{0.015cm}',\vec{y};s,t,-\overline{v}-2,\vec{k})}W_{\vec{k}}(\vec{y},t)=\frac{\delta_{\vec{x},\vec{x}' }}{W_{\vec{k}}(\vec{x},s)}.\label{eq:RatAWMultiOrthy}
		\end{align}
	\end{proposition}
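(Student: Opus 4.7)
The plan is to mimic the univariate proof of Proposition~\ref{prop:biorthraAW} in the multivariate setting, with the multivariate orthogonality relations \eqref{eq:ASCOrthnMulti}--\eqref{eq:ASCOrthxMulti} playing the role of their univariate counterparts.

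First I would introduce the Hilbert space $\mathcal{H}_{\vec{k}}^s$ of functions on $\Z_{\geq 0}^M$ with inner product
\[
\langle f,g\rangle_{\mathcal{H}_{\vec{k}}^s} = \sum_{\vec{x}\in\Z_{\geq 0}^M} f(\vec{x})\overline{g(\vec{x})}W_{\vec{k}}(\vec{x},s),
\]
and the linear map $\Lambda_s^{(M)}\colon H_{\vec{k}}\to\mathcal{H}_{\vec{k}}^s$ defined by $(\Lambda_s^{(M)}f)(\vec{x}) = \langle f,\Phi_{0,s}(\cdot,\vec{x})\rangle_{H_{\vec{k}}}$. Evaluating on a rescaled delta function $\delta_{\vec{n}}/w_{\vec{k}}(\vec{n})$ gives $\Phi_{0,s}(\vec{n},\cdot)$, so the multivariate orthogonality relations \eqref{eq:ASCOrthnMulti} and \eqref{eq:ASCOrthxMulti} guarantee that $\Lambda_s^{(M)}$ sends an orthogonal basis of $H_{\vec{k}}$ to one of $\mathcal{H}_{\vec{k}}^s$ while preserving the norm. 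Extending by continuity makes $\Lambda_s^{(M)}$ unitary.

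Next, using the nested factorization $\Phi_{u,t}(\vec{n},\vec{y}) = q^{-u|\vec{n}|}\Phi_{0,t}(\vec{n},\vec{y})$ with $|\vec{n}|=n_1+\cdots+n_M$ (an immediate consequence of applying \eqref{eq:phiusphi0s} in each factor of the nested product), a direct computation identifies $\Lambda_s^{(M)}\Phi_{v+1,t}(\cdot,\vec{y})$ with $P_\mathrm{r}(\cdot,\vec{y};s,t,v,\vec{k})$, and similarly $\Lambda_s^{(M)}\Phi_{-\overline{v}-1,t}(\cdot,\vec{y}\hspace{0.015cm}')$ with $P_\mathrm{r}(\cdot,\vec{y}\hspace{0.015cm}';s,t,-\overline{v}-2,\vec{k})$. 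The hypothesis $|\mathrm{Re}(v)+1|<s+t+2$ arises from requiring that both $\Phi_{v+1,t}(\cdot,\vec{y})$ and $\Phi_{-\overline{v}-1,t}(\cdot,\vec{y}\hspace{0.015cm}')$ lie in $H_{\vec{k}}$: iterating the convergence criterion of Corollary~\ref{cor:overlapratASC} through all $M$ layers of the nested product produces the two inequalities $\mathrm{Re}(v)<s+t+1$ and $-\mathrm{Re}(v)-2<s+t+1$, which combine into the stated range.

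Unitarity of $\Lambda_s^{(M)}$ then yields
\[
\langle P_\mathrm{r}(\cdot,\vec{y};s,t,v,\vec{k}), P_\mathrm{r}(\cdot,\vec{y}\hspace{0.015cm}';s,t,-\overline{v}-2,\vec{k})\rangle_{\mathcal{H}_{\vec{k}}^s} = \langle \Phi_{v+1,t}(\cdot,\vec{y}), \Phi_{-\overline{v}-1,t}(\cdot,\vec{y}\hspace{0.015cm}')\rangle_{H_{\vec{k}}},
\]
and the factors $q^{-(v+1)|\vec{n}|}$ and $\overline{q^{-(-\overline{v}-1)|\vec{n}|}} = q^{(v+1)|\vec{n}|}$ cancel inside the sum, leaving $\sum_{\vec{n}} \Phi_{0,t}(\vec{n},\vec{y})\Phi_{0,t}(\vec{n},\vec{y}\hspace{0.015cm}')w_{\vec{k}}(\vec{n}) = \delta_{\vec{y},\vec{y}\hspace{0.015cm}'}/W_{\vec{k}}(\vec{y},t)$ by \eqref{eq:ASCOrthnMulti}. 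This proves \eqref{eq:RatAWMultiOrthx}; the companion relation \eqref{eq:RatAWMultiOrthy} follows from the symmetry $P_\mathrm{r}(\vec{y},\vec{x};t,s,v,\vec{k}) = P_\mathrm{r}(\vec{x},\vec{y};s,t,v,\vec{k})$, which is inherited from the univariate symmetry through the nested product. The only obstacle is a mild one: bookkeeping the convergence estimate through the $M$ nested layers; once this is settled the algebraic steps are structurally identical to the univariate case.
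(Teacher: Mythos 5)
Your proposal is correct and follows essentially the same route as the paper, whose proof of this proposition is literally ``similar to the proof of Proposition \ref{prop:biorthraAW}, using the orthogonality for the multivariate $q^{-1}$-Al-Salam--Chihara polynomials instead of the univariate one''; you have simply written out the details (the multivariate unitary $\Lambda_s^{(M)}$, the factor $q^{-u|\vec{n}|}$, the cancellation between $v+1$ and $-\overline{v}-1$, and the symmetry argument for the second relation). The convergence discussion you flag as the only delicate point is likewise the same one the paper handles in the univariate case via Corollary \ref{cor:overlapratASC}.
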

	\begin{proof}
		Similar to the proof of Proposition \ref{prop:biorthraAW}, using the orthogonality for the multivariate $q^{-1}$-Al-Salam--Chihara polynomials instead of the univariate one.
	\end{proof}
	We end this section by deducing $q$-difference equations for the multivariate rational functions $P_\mathrm{r}(\vec{x},\vec{y})$. We use a similar approach as for $R_\mathrm{r}(x,y)$. Using Proposition \ref{prop:multivarASCEV}, we have that $\Phi_{1,s}(\cdot,\vec{x})$ are also eigenfunctions of $\pi_{\vec{k}}\Big(\Delta^{j-1}_\mathrm{R}\big(\widetilde{Y}_{1,h'_{M-j}(\vec{x},s)} \big)\Big)$,
	\begin{align}
		\pi_{\vec{k}}\Big(\Delta^{j-1}_\mathrm{R}\big(\widetilde{Y}_{1,h'_{M-j}(\vec{x},s)} \big)\Big)	\Phi_{1,s}(\,\cdot\,,\vec{x}) = \{h'_M(\vec{x},s)\}_q \Phi_{1,s}(\,\cdot\,,\vec{x}). \label{eq:deltaYtuev2}
	\end{align}
	Therefore, $\Phi_{1,s}(\vec{n},\vec{x})$ also solves the GEVP
	\begin{align}
		\pi_{\vec{k}}\Big(\Delta^{j-1}_\mathrm{R}\big(Y_{0,h'_{M-j}(\vec{x},s)} \big)\Big)	\Phi_{1,s}(\,\cdot\,,\vec{x}) = \lambda_j \pi_{\vec{k}}\Big(\Delta^{j-1}_\mathrm{R}\big(K^2\big) \Big) \Phi_{1,s}(\,\cdot\,,\vec{x}) \label{eq:GEVPUqmulti2Y}
	\end{align}
	for $j=1,\ldots,M$ with $\lambda_j=\{h_M(\vec{x},s)\}_q$. If we now use the GEVP \eqref{eq:GEVPUqmulti2Y} and that both $\Delta^{j-1}_\mathrm{R}(K^2)$ and $\Delta^{j-1}_\mathrm{R}\big(Y_{0,h'_{M-j}(\vec{x},s)}\big)$ are symmetric, we obtain
	\begin{align}
		\begin{split}\{h'_{M}(\vec{x},s)\}_q &\Big\langle \Phi_{1,s}(\,\cdot\,,\vec{x}),\pi_{\vec{k}}\Big( \Delta^{j-1}_\mathrm{R}\big(K^2\big)\Big)\Phi_{v,t}(\,\cdot\,,\vec{y}) \Big\rangle_{H_{\vec{k}}} \\
			= &\Big\langle \Phi_{1,s}(\,\cdot\,,\vec{x}),\pi_{\vec{k}}\Big( \Delta^{j-1}_\mathrm{R}\big(Y_{0,h'_{M-j}(\vec{x},s)}\big)\Big)\Phi_{v,t}(\,\cdot\,,\vec{y}) \Big\rangle_{H_{\vec{k}}}.\end{split}\label{eq:CorGEVPmultiY} 
	\end{align}
	Now again, 
	\begin{itemize}
		\item we can rewrite $\Delta^{j-1}_\mathrm{R}(Y_{0,h'_{M-j}(\vec{x},s)})$ in terms of $\Delta^{j-1}_\mathrm{R}(K^2)$ and $\Delta^{j-1}_\mathrm{R}(\widetilde{Y}_{v,t})$ for any $t$,
		\item the action of $\pi_{\vec{k}}\big( \Delta^{j-1}_\mathrm{R}\big(K^2\big)\big)$ and $\Delta^{j-1}_\mathrm{R}(\widetilde{Y}_{v,t})$ can be transferred from the $\vec{n}$-variable to the $\vec{y}$-variable.
	\end{itemize}
	The first point is a consequence of Corollary \ref{cor:YusinK2Yvt}, the second one of the eigenvalue equation \eqref{eq:deltaYtuev2} and the following analogue of Lemma \ref{lem:threetermKraMulti}.
	\begin{lemma} \label{lem:threetermASCMulti}
		De actions of $\pi_{\vec{k}}\big(\Delta^{j-1}_\mathrm{R}\big(K^2\big)\big)$ and $\pi_{\vec{k}}\big(\Delta^{j-1}_\mathrm{R}\big(Y_{0,h'_{M-j}(\vec{x},s)}\big)\big)$ on the $\vec{n}$-variable of $\Phi_{v,t}(\vec{n},\vec{y})$ can be transferred to the $\vec{y}$-variable,
		\begin{align*}
			\big[\pi_{\vec{k}} \Big(\Delta^{j-1}_\mathrm{R}\big(K^2\big)\Big)\Phi_{v,t}(\,\cdot\,,\vec{y})\big](\vec{n}) &= \sum_{\vec{\varepsilon}\in \mathcal{E}_j}C^{(j)}_{\vec{\varepsilon}}(\vec{y},t)\Phi_{v,t}(\vec{n},\vec{y}+\vec{\varepsilon}),\\
			\big[\pi_{\vec{N}}\Big( \Delta^{j-1}_\mathrm{R}\big(Y_{0,h'_{M-j}(\vec{x},s)}\big)\Big)\Phi_{v,t}(\,\cdot\,,\vec{y})\big](\vec{n}) &= \{h_{M-j}(\vec{x},s)\}_q \Phi_{v,t}(\vec{n},\vec{y})+ \sum_{\vec{\varepsilon}\in \mathcal{E}_j}D^{(j)}_{\vec{\varepsilon}}(\vec{y},t)\Phi_{v,t}(\vec{n},\vec{y}+\vec{\varepsilon}),
		\end{align*}
		where
		\begin{align*}
			C^{(j)}_{\vec{\varepsilon}}(\vec{y},t) &= \prod_{i=M-j+1}^{M} c^{}_{\varepsilon_i,h'_{i-1}(\vec{\varepsilon},0)}(y_i,h'_{i-1}(\vec{y},t)),\\
			c^{}_{\varepsilon,0}(y,t) &= c^{}_{\varepsilon}(y,t),
		\end{align*} 
		and
		\begin{align*}
			D^{(j)}_{\vec{\varepsilon}}(\vec{y},t) = \begin{cases}
				C^{(j)}_{\vec{\varepsilon}}(\vec{y},t)\{h'_M(\vec{y},t)+v-1\}_q \qquad &\text{if $\ \sum_{i=1}^M\vec{\varepsilon}_i=-1$,} \\[6pt]
				C^{(j)}_{\vec{\varepsilon}}(\vec{y},t)\{h'_M(\vec{y},t)\big\}_q\{v\}_q-\{h'_{M-j}(\vec{y},t)\}_q\{v\}_q \qquad &\text{if $\ \vec{\varepsilon}=0$,} \\[6pt]
				C^{(j)}_{\vec{\varepsilon}}(\vec{y},t)\{h'_M(\vec{y},t)\}_q\{v\}_q \qquad &\text{if $\ \sum_{i=1}^M\vec{\varepsilon}_i=0$, $\vec{\varepsilon}\neq 0$,}\\[6pt]
				C^{(j)}_{\vec{\varepsilon}}(\vec{y},t)\{h'_M(\vec{y},t)-v+1\}_q \qquad &\text{if $\ \sum_{i=1}^M\vec{\varepsilon}_i=1$.}
			\end{cases} .
		\end{align*}
	\end{lemma}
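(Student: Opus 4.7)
The plan is to mirror the proof of Lemma \ref{lem:threetermKraMulti} step by step, replacing every $q^{-1}$-Krawtchouk ingredient by its $q^{-1}$-Al--Salam--Chihara counterpart and every $[\,\cdot\,]_q$ by $\{\,\cdot\,\}_q$. For the first identity I would argue by induction on $j$. The base case $j=1$ is exactly \eqref{eq:3termK2phi} with $t$ replaced by $h'_{M-1}(\vec{y},t)$. For the inductive step, I would peel off one additional slot from $\Delta^{j}_{\mathrm R}(K^{2})$, apply the inductive hypothesis to the remaining $j$ slots, and then expand the single left-over factor of the form $q^{2n_{i}+k_{i}}\phi_{v,h'_{i-1}(\vec{y},t)}(n_{i},y_{i})$. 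Which $q$-difference equation to invoke for that factor is dictated by the partial sum $S$ of the shifts $\varepsilon_\ell$ already produced by the inductive hypothesis: if $S=0$ one uses the standard equation \eqref{eq:3termK2phi}, while if $S=\pm 1$ one applies a \emph{dynamical} $q$-difference equation that simultaneously shifts $y_{i}$ and the second parameter of $\phi$ by $\mp 2$, which is precisely what preserves the nested height structure of $\Phi_{v,t}(\vec{n},\vec{y}+\vec\varepsilon)$ and keeps $\vec\varepsilon\in\mathcal{E}_{j+1}$.

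These dynamical equations are the non-compact analogue of Lemma \ref{lem:qdifkrawtchouk}. They can be derived either directly in the spirit of \cite[Lemma 6.1]{GroeneveltWagenaarDyn}, or by the formal substitution $N\mapsto -k$ in the $q^{-1}$-Krawtchouk version of Lemma \ref{lem:qdifkrawtchouk} and matching the resulting coefficients against the three-term recurrence of the $q^{-1}$-Al--Salam--Chihara polynomials. Once they are in place, the coefficients $C^{(j)}_{\vec\varepsilon}(\vec{y},t)$ come out as the explicit product of the univariate coefficients accumulated along the induction, with the slot-dependent choice between the standard and dynamical versions encoded in the subscript $h'_{i-1}(\vec\varepsilon,0)\in\{0,\pm 2\}$.

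For the second identity I would apply Corollary \ref{cor:YusinK2Yvt} with $u=0$, $s\mapsto h'_{M-j}(\vec{x},s)$ and $t\mapsto h'_{M-j}(\vec{y},t)$, and then apply the algebra homomorphism $\Delta^{j-1}_{\mathrm R}$ to both sides. This expresses $\Delta^{j-1}_{\mathrm R}\bigl(Y_{0,h'_{M-j}(\vec{x},s)}\bigr)$ as a linear combination of $\Delta^{j-1}_{\mathrm R}(K^{2})\,\Delta^{j-1}_{\mathrm R}\bigl(\widetilde{Y}_{v,h'_{M-j}(\vec{y},t)}\bigr)$, its reverse, and a constant involving $\{h'_{M-j}(\vec{y},t)\}_q\{v\}_q$. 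Using the first identity for the $K^{2}$-factor and the eigenvalue specialisation
\[
\pi_{\vec{k}}\Bigl(\Delta^{j-1}_{\mathrm R}\bigl(\widetilde{Y}_{v,h'_{M-j}(\vec{y},t)}\bigr)\Bigr)\Phi_{v,t}(\,\cdot\,,\vec{y}+\vec\varepsilon) = \{h'_{M}(\vec{y}+\vec\varepsilon,t)\}_q\,\Phi_{v,t}(\,\cdot\,,\vec{y}+\vec\varepsilon)
\]
extracted from Proposition \ref{prop:multivarASCEV} (applied with starting parameter $h'_{M-j}$ rather than $t$), one then collects coefficients and reads off $D^{(j)}_{\vec\varepsilon}$ in the four cases $\sum_i\varepsilon_i\in\{-1,0,1\}$ together with $\vec\varepsilon=0$, exactly as in the $\U_q(\su_2)$ calculation.

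The main obstacle is the explicit derivation and bookkeeping of the dynamical $q$-difference equations for the $q^{-1}$-Al--Salam--Chihara polynomials, since this lemma is not stated in the excerpt; once it is in hand, the remainder is bookkeeping parallel to the compact case. A secondary technical point is that $\pi_{\vec{k}}$ acts on an infinite-dimensional space by possibly unbounded operators, but since all identities under consideration are pointwise in $\vec{n}$ with only finitely many nonzero terms contributing at each point, no additional analytic input is needed to transfer the operator action from the $\vec{n}$-variable to the $\vec{y}$-variable.
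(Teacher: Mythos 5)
Your proposal is correct and follows essentially the same route as the paper: the paper's proof likewise reduces everything to the argument of Lemma \ref{lem:threetermKraMulti}, with \eqref{eq:3termK2phi} in place of \eqref{eq:3termK2} and the dynamical $q$-difference equations for the $q^{-1}$-Al--Salam--Chihara polynomials (which the paper states explicitly, citing \cite[Lemma 5.9]{Gr21} with a parameter substitution rather than re-deriving them) in place of Lemma \ref{lem:qdifkrawtchouk}. The only substantive difference is that you propose obtaining those dynamical equations by the heuristic substitution $N\mapsto -k$ or a direct derivation, whereas the paper imports them from the literature; either way the remaining bookkeeping is identical.
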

	\begin{proof}
		Similar to the proof of Lemma \ref{lem:threetermKraMulti}, where now one has to use \eqref{eq:3termK2phi} and the following `dynamical' $q$-difference equations for the univariate Al-Salam--Chihara polynomials,
		\begin{align*}
			\begin{split}q^{2n+k}\phi_{v,t}(n,y) =&  c_{-2,2}(y,t) \phi_{v,t+2}(n,y-2)+c_{-1,2}(y,t) \phi_{v,t+2}(n,y-1) +c_{0,2}(y,t) \phi_{v,t+2}(n,y) ,\end{split}\\
			\begin{split} q^{2n+k}\phi_{v,t}(n,y) =&  c_{0,-2}(y,t) \phi_{v,t-2}(n,y)  +c_{1,-2}(y,t) \phi_{v,t-2}(n,y+1)+c_{2,-2}(y,t) \phi_{v,t-2}(n,y+2),\end{split}
		\end{align*}
		where
		\begin{align*}
			c_{-2,2}(y,t)&=\frac{(1-q^{2y})(1-q^{2y-2})}{(1+q^{4y+2t-2N})(1+q^{4y+2t-2N-2})},\\
			c_{-1,2}(y,t)&=\frac{(1+q^{2})(1-q^{2y})(1-q^{-2y-2k-2t})}{(1-q^{4y+2t+2k+2})(1-q^{-4y-2k-2t+2})}, \\
			c_{0,2}(y,t)&=\frac{(1-q^{-2y-2k-2t})(1-q^{-2y-2k-2t-2})}{(1-q^{-4y-2k-2t})(1-q^{-4y-2k-2t-2})},
		\end{align*}
		and
		\begin{align*}
			c_{0,-2}(y,t)&=\frac{(1-q^{2y+2t})(1-q^{2y+2t-2})}{(1-q^{4y+2t+2k})(1-q^{4y+2t+2k-2})},\\
			c_{1,-2}(y,t)&= \frac{(1+q^{2})(1-q^{-2y-2k})(1-q^{2y+2t})}{(1-q^{-4y-2k-2t+2})(1-q^{4y+2t+2k+2})},\\
			c_{2,-2}(y,t)&= \frac{(1-q^{-2y-2k})(1-q^{-2y-2k-2})}{(1-q^{-4y-2k-2t})(1-q^{-4y-2k-2t-2})}.
		\end{align*}
		These equations can be found\footnote{In \cite[Lemma 5.9]{Gr21}, replace $q$ by $q^{-1}$, $s$ by $q^{-t}$ and $x$ by $q^{-2y-t-k}$.} in \cite[Lemma 5.9]{Gr21}. 
	\end{proof}
	We can now combine \eqref{eq:CorGEVPmultiY} and Lemma \ref{lem:threetermASCMulti} to deduce recurrence relations for the multivariate rational functions $P_\mathrm{r}(\vec{x},\vec{y})$, which generalize Proposition \ref{prop:GEVPP(x,y)}.
	\begin{corollary} \label{cor:GEVPP(x,y)multi}
		For $j=1,\ldots,M$, the multivariate rational function $P_\mathrm{r}(\vec{x},\vec{y})$ satisfies the following GEVPs,
		\begin{align*}
			\{h_M(\vec{x},s)\}_q \sum_{\vec{\varepsilon}\in \mathcal{E}_j} C_{\vec{\varepsilon}}^{(j)}(\vec{y},t) P_\mathrm{r}(\vec{x},\vec{y}+\vec{\varepsilon})= \{h_{M-j}(\vec{x},s)\}_q P_\mathrm{r}(\vec{x},\vec{y}) +\sum_{\vec{\varepsilon}\in \mathcal{E}_j} D_{\vec{\varepsilon}}^{(j)}(\vec{y},t) P_\mathrm{r}(\vec{x},\vec{y}+\vec{\varepsilon}).
		\end{align*}
	\end{corollary}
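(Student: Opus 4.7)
The plan is to mirror the proof of Corollary \ref{cor:GEVPR(x,y)multi}, but with two key adjustments for the non-compact setting: (i) replacing the compact twisted primitive elements and eigenfunctions by their $\U_q(\mathfrak{su}_{1,1})$ analogues, and (ii) justifying the transfer of the unbounded operator $\pi_{\vec{k}}\big(\Delta^{j-1}_\mathrm{R}(Y_{0,h'_{M-j}(\vec{x},s)})\big)$ from one slot of the inner product to the other, since self-adjointness is not automatic.

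The starting point is identity \eqref{eq:CorGEVPmultiY}, which rests on two ingredients already established earlier in this section: the GEVP \eqref{eq:GEVPUqmulti2Y} satisfied by $\Phi_{1,s}$, and the symmetry of both $\Delta^{j-1}_\mathrm{R}(K^2)$ and $\Delta^{j-1}_\mathrm{R}\big(Y_{0,h'_{M-j}(\vec{x},s)}\big)$ with respect to $\langle\,\cdot\,,\,\cdot\,\rangle_{H_{\vec{k}}}$. Bounded operators like $\pi_{\vec{k}}\big(\Delta^{j-1}_\mathrm{R}(K^2)\big)$ cause no trouble. For the unbounded piece, I would proceed exactly as in Proposition \ref{prop:GEVPP(x,y)}: truncate $\Phi_{1,s}$ and $\Phi_{v,t}$ in the $\vec{n}$-variable, use that the truncated functions lie in the finite-support domain where $\pi_{\vec{k}}\big(\Delta^{j-1}_\mathrm{R}(Y_{0,h'_{M-j}(\vec{x},s)})\big)$ is manifestly symmetric, and then pass to the limit by dominated convergence. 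The tridiagonal (in each tensor slot) nature of $Y_{0,\,\cdot\,}$ combined with Lemma \ref{lem:threetermASCMulti} yields a finite collection of shifts of $\Phi_{v,t}$, each of which lies in $H_{\vec{k}}$, so a Cauchy--Schwarz bound produces an integrable dominating function as in Proposition \ref{prop:GEVPP(x,y)}.

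Once \eqref{eq:CorGEVPmultiY} is secured, the remainder is algebraic. On the left-hand side, Lemma \ref{lem:threetermASCMulti} expresses $\pi_{\vec{k}}\big(\Delta^{j-1}_\mathrm{R}(K^2)\big)\Phi_{v,t}(\,\cdot\,,\vec{y})$ as $\sum_{\vec{\varepsilon}\in\mathcal{E}_j} C^{(j)}_{\vec{\varepsilon}}(\vec{y},t)\Phi_{v,t}(\,\cdot\,,\vec{y}+\vec{\varepsilon})$; taking the inner product with $\Phi_{1,s}(\,\cdot\,,\vec{x})$ and multiplying by $\{h'_M(\vec{x},s)\}_q$ produces exactly $\{h'_M(\vec{x},s)\}_q\sum_{\vec{\varepsilon}\in\mathcal{E}_j}C^{(j)}_{\vec{\varepsilon}}(\vec{y},t)P_\mathrm{r}(\vec{x},\vec{y}+\vec{\varepsilon})$. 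On the right-hand side, Lemma \ref{lem:threetermASCMulti} similarly expands the action of $\pi_{\vec{k}}\big(\Delta^{j-1}_\mathrm{R}(Y_{0,h'_{M-j}(\vec{x},s)})\big)$, with the diagonal term $\{h'_{M-j}(\vec{x},s)\}_q \Phi_{v,t}(\vec{n},\vec{y})$ yielding $\{h'_{M-j}(\vec{x},s)\}_q P_\mathrm{r}(\vec{x},\vec{y})$ and the remaining terms giving $\sum_{\vec{\varepsilon}\in\mathcal{E}_j}D^{(j)}_{\vec{\varepsilon}}(\vec{y},t)P_\mathrm{r}(\vec{x},\vec{y}+\vec{\varepsilon})$. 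Combining the two sides is the claimed GEVP.

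The main technical obstacle is the dominated convergence argument for the unbounded operator. However, because $Y_{0,\,\cdot\,}$ acts as a nearest-neighbour operator on each tensor factor and the boundary error terms produced by truncation decay in the norm of $H_{k_j}$ (as $\phi_{v,t}(\,\cdot\,,y)\in H_{k_j}$ together with the uniform estimate used in the proof of Proposition \ref{prop:GEVPP(x,y)}), one obtains an $H_{\vec{k}}$-integrable majorant by iterating the one-variable argument slot by slot and applying Cauchy--Schwarz. Everything else is a direct substitution, so no new computation of coefficients is required beyond reading them off Lemma \ref{lem:threetermASCMulti}.
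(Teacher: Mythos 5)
Your proposal is correct and follows essentially the same route as the paper: the corollary is obtained by combining the identity \eqref{eq:CorGEVPmultiY} with the transfer of the actions of $\pi_{\vec{k}}\big(\Delta^{j-1}_\mathrm{R}(K^2)\big)$ and $\pi_{\vec{k}}\big(\Delta^{j-1}_\mathrm{R}(Y_{0,h'_{M-j}(\vec{x},s)})\big)$ to the $\vec{y}$-variable via Lemma \ref{lem:threetermASCMulti}. Your additional truncation-and-dominated-convergence justification of the symmetry of the unbounded operator is a sensible elaboration of what the paper only spells out in the univariate case (Proposition \ref{prop:GEVPP(x,y)}).
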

	\section{Application: $R_\mathrm{r}$ and $P_\mathrm{r}$ as duality functions of interacting particle systems} \label{sec:duality}
	As an application of the multivariate rational functions $R_\mathrm{r}(x,y)$ and $P_\mathrm{r}(x,y)$, we will show that they appear as duality functions for certain interacting particle systems. Let $\{\mathcal{X}_t\}_{t\geq0}$ and $\{\widehat{\mathcal{X}}_t\}_{t\geq0}$ be Markov processes with state spaces $\Omega$ and $\widehat{\Omega}$ and generators $L$ and $\widehat{L}$. We say that $\mathcal{X}_t$ and $\widehat{\mathcal{X}}_t$ are dual to each other with respect to a duality function $D:\Omega\times\widehat{\Omega}\to \C$ if
	\[		
	[L D(\,\cdot\,,\hat{\eta})](\eta)=[\widehat{L} D(\eta,\,\cdot\,)](\hat{\eta})
	\]  
	for all $\eta\in \Omega$ and $\hat{\eta}\in\widehat{\Omega}$. It was shown in \cite{GroeneveltWagenaarDyn} and \cite{Wa} that $K_{v,t}(\vec{n},\vec{x})$ and $\Phi_{v,t}(\vec{n},\vec{x})$ are duality functions: the first for the duality between ASEP$(q,\vec{N})$ and ASEP$_\mathsf{L}(q,\vec{N},t)$ (dynamic ASEP), the second for the duality between ASIP$(q,\vec{k})$ and ASIP$_\mathsf{L}(q,\vec{k},t)$ (dynamic ASIP). We refer to \cite{GroeneveltWagenaarDyn} and \cite{Wa} for detailed descriptions of the interacting particle systems and their relation with the quantum algebra $\U_q(\mathfrak{sl}_2)$. Having duality functions, other duality functions can be obtained from the scalar-product method \cite{CarFraGiaGroRed}. In this way it follows that ASEP$_\mathsf{L}(q,\vec{N},s)$ is dual to ASEP$_\mathsf{L}(q,\vec{N},t)$ with duality function
	\[
		\langle K_{1,s}(\,\cdot\,,\vec{x}), K_{v,t}(\,\cdot\,,\vec{y})  \rangle_{H_{\vec{N}}} = R_\mathrm{r}(\vec{x},\vec{y};s,t).
	\]
	Similarly, ASIP$_\mathsf{L}(q,\vec{k},s)$ is dual to ASIP$_\mathsf{L}(q,\vec{k},t)$ with duality function
	\[
	\langle \Phi_{1,s}(\,\cdot\,,\vec{x}), \Phi_{v,t}(\,\cdot\,,\vec{y})  \rangle_{H_{\vec{k}}} = P_\mathrm{r}(\vec{x},\vec{y};s,t).
	\]

\end{document}